\newtheorem{thm}{Theorem}[section]
\newtheorem*{thm*}{Theorem}
\newtheorem{lem}[thm]{Lemma}
\newtheorem{cor}[thm]{Corollary}
\newtheorem{prop}[thm]{Proposition}
\newtheorem{conj}[thm]{Conjecture}
\newenvironment{customthm}[1]
{\innercustomthm}{\endinnercustomthm}
\theoremstyle{definition}
\newtheorem{define}[thm]{Definition}
\newtheorem{defthm}[thm]{Definition-Theorem}
\newtheorem*{define*}{Definition}
\newtheorem{ex}[thm]{Example}
\newtheorem*{nota*}{Notation}
\newtheorem{rem}[thm]{Remark}
\newtheorem*{rem*}{Remark}
\newtheorem{ques}{Question}
\newcommand{\Z}{\mathbb{Z}}
\newcommand{\inv}{^{-1}}
\newcommand{\B}{\mathcal{B}}
\newcommand{\X}{\mathcal{X}}
\newcommand{\Y}{\mathcal{Y}}
\newcommand{\T}{\mathcal{T}}
\renewcommand{\S}{\mathcal{S}}
\newcommand{\Cat}{\mathcal{C}}
\renewcommand{\H}{\mathcal{H}_{0\textnormal{-PS}}}
\renewcommand{\P}{\mathcal{P}}
\newcommand{\M}{\mathcal{M}}
\newcommand{\Hom}{\mathrm{Hom}}
\newcommand{\Ext}{\mathrm{Ext}}
\newcommand{\End}{\mathrm{End}}
\newcommand{\image}{\mathrm{Im}}
\renewcommand{\dim}{\mathrm{dim}}
\newcommand{\supp}{\mathrm{supp}}
\renewcommand{\ker}{\mathrm{ker}}
\newcommand{\coker}{\mathrm{coker}}
\newcommand{\mods}{\mathsf{mod}}
\newcommand{\Db}{\mathcal{D}^b}
\newcommand{\W}{\mathfrak{W}}
\newcommand{\wide}{\mathsf{wide}}
\newcommand{\tors}{\mathsf{tors}}
\newcommand{\sbrick}{\mathsf{sbrick}}
\newcommand{\brick}{\mathsf{brick}}
\newcommand{\str}{\mathsf{s}\tau\textnormal{-}\mathsf{rigid}}
\newcommand{\smc}{2\textnormal{-}\mathsf{smc}}
\newcommand{\add}{\mathsf{add}}
\newcommand{\Filt}{\mathsf{Filt}}
\newcommand{\Fac}{\mathsf{Fac}}
\newcommand{\Sub}{\mathsf{Sub}}
\newcommand{\ind}{\mathsf{ind}}
\newcommand{\thick}{\mathsf{thick}}
\newcommand{\Hasse}{\mathsf{Hasse}}
\newcommand{\Arc}{\mathsf{Arc}}
\newcommand{\cone}{\mathsf{cone}}
\newcommand{\cocone}{\mathsf{cocone}}
\newcommand{\lperp}[1]{\prescript{\perp}{}{#1}}
\newcommand{\xrightarrowtwohead}[2][]{%
  \xrightarrow[#1]{#2}\mathrel{\mkern-14mu}\rightarrow
}
\title{Pairwise Compatibility for 2-Simple Minded Collections}
\author{Eric J. Hanson}
\address{Brandeis University, Department of Mathematics, 415 South Street, Waltham MA 02453, USA}
\email{ehanson4@brandeis.edu}
\author{Kiyoshi Igusa}
\address{Brandeis University, Department of Mathematics, 415 South Street, Waltham MA 02453, USA}
\email{igusa@brandeis.edu}
\subjclass[2010]{
16G20, 05E15}
\keywords{simple minded collections, $\tau$-tilting, gentle algebras, representations of quivers, torsion classes, picture groups}
\date{October 14, 2020}
\thanks{\copyright \ 2020 Elsevier. This work is licensed under CC-BY-NC-ND 2.0. See https://creativecommons.org/licenses/by-nc-nd/2.0/ for a copy of this license.}
\begin{document}
\noindent J. Pure Appl. Algebra 225 (2021), no. 6. \href{https://doi.org/10.1016/j.jpaa.2020.106598}{DOI:10.1016/j.jpaa.2020.106598}.

\bigskip

\maketitle

\begin{abstract}
	In $\tau$-tilting theory, it is often difficult to determine when a set of bricks forms a 2-simple minded collection. The aim of this paper is to determine when a set of bricks is contained in a 2-simple minded collection for a $\tau$-tilting finite algebra. We begin by extending the definition of mutation from 2-simple minded collections to more general sets of bricks (which we call semibrick pairs). This gives us an algorithm to check if a semibrick pair is contained in a 2-simple minded collection. We then use this algorithm to show that the 2-simple minded collections of a $\tau$-tilting finite gentle algebra (whose quiver contains no loops or 2-cycles) are given by pairwise compatibility conditions if and only if every vertex in the corresponding quiver has degree at most 2. As an application, we show that the classifying space of the $\tau$-cluster morphism category of a $\tau$-tilting finite gentle algebra (whose quiver contains no loops or 2-cycles) is an Eilenberg-MacLane space if every vertex in the corresponding quiver has degree at most 2.
\end{abstract}

\tableofcontents

\section*{Introduction} 
This paper furthers the study of the connection between \emph{$\tau$-tilting theory} and \emph{semi-invariant pictures} described in \cite{hanson_tau}. In that paper, the authors study a finitely presented group, called the \emph{picture group}, associated to an arbitrary $\tau$-tilting finite algebra. Picture groups (and picture spaces) were first defined in a special case by Loday in \cite{loday_homotopical}, then by the second author, Todorov, and Weyman in the general hereditary case in \cite{igusa_picture}. In \cite{hanson_tau}, this is extended to the non-hereditary case. The picture group of an algebra can be realized as the fundamental group of a topological space, constructed as the classifying space of the \emph{$\tau$-cluster morphism category} of the algebra, as defined by Buan and Marsh in \cite{buan_category} to generalize a construction of the second author and Todorov in \cite{igusa_signed}.

Crucial to relating the cohomology of the picture group of an algebra to the cohomology of the associated topological space is an understanding of the algebra's \emph{2-simple minded collections}, as defined in \cite{koenig_silting,brustle_ordered}. These collections generalize the idea of a complete collection of non-isomorphic simple modules, and are known to be in bijection with many other objects in representation theory, for example 2-term silting complexes, support $\tau$-tilting objects, and functorially finite torsion classes (see \cite{brustle_ordered,asai_semibricks} for a more detailed list). In \cite{hanson_tau}, the authors show that the topological space and picture group have isomorphic cohomology when the 2-simple minded collections of the algebra are given by pairwise compatibility conditions, generalizing results of \cite{igusa_signed} and \cite{igusa_category}.

One of the main results of this paper is to show that for most $\tau$-tilting finite gentle algebras, the 2-simple minded collections cannot be defined using pairwise compatibility conditions, disproving a conjecture from \cite{hanson_tau}. This is in stark contrast to many of the other associated structures in representation theory. For example, both support $\tau$-tilting objects (see \cite[Thm. 2.12]{adachi_tilting}) and 2-term silting objects (see \cite[Prop. 2.16]{aihara_tilting}) are given by pairwise compatibility conditions. In light of this, our results indicate that 2-simple minded collections are in many ways much more subtle than some of these other structures.

Gentle algebras form a natural class to study due to the long known combinatorial description of their indecomposable modules and the morphisms between them in terms of strings (see \cite{butler_auslander,crawley_maps,schroer_modules}). More recently, a basis has been given for extensions between indecomposable modules in \cite{canakci_extensions,brustle_combinatorics} and the $\tau$-tilting theory of gentle algebras has been described in terms of non-kissing complexes (see \cite{palu_kissing,brustle_combinatorics}). This has influenced further study of the relationship between gentle algebras and various combinatorial objects, such as ribbon graphs (see \cite{schroll_trivial,opper_geometric}), marked punctured surfaces (see \cite{baur_geometric,lekili_derived,palu_kissing,amiot_complete}), and biclosed sets (see \cite{garver_categorification,garver_oriented}). One of the central results of \cite{garver_oriented} relates the 2-simple minded collections of certain gentle algebras to the data of a noncrossing tree partition and its Kreweras complement. We remark that our work does not rely on this interpretation.

\subsection*{Notation and Terminology}
Let $\Lambda$ be a finite dimensional, basic algebra over an arbitrary field $K$. When we write $\Lambda = KQ/I$, we assume that $Q$ is a quiver and that $I$ is an admissible ideal unless otherwise stated. Our convention is to multiply paths left to right. We denote by $\mods\Lambda$ the category of finitely generated (right) $\Lambda$-modules. Throughout this paper, all subcategories are assumed to be full and closed under isomorphisms. For $M \in \mods\Lambda$, we denote by $\add M$ (resp. $\Fac M, \Sub M)$ the subcategory of direct summands (resp. factors, submodules) of finite direct sums of $M$. Moreover, $\Filt M$ refers to the subcategory of objects admitting a (finite) filtration by the direct summands of $M$. Given a subcategory $\M \subset \mods\Lambda$, we define $\add\M, \Fac\M, \Sub\M$, and $\Filt\M$ analogously.

We denote by $\Db(\mods\Lambda)$ the bounded derived category of $\mods\Lambda$. The symbol $(-)[1]$ will denote the shift functor in all triangulated categories. We identify $\mods\Lambda$ with the subcategory of $\Db(\mods\Lambda)$ consisting of stalk complexes centered at zero.

For an object $M$ in a category $\Cat$, we define the \emph{left-perpendicular category} of $M$ as $\lperp{M}:=\{N \in \Cat|\Hom(N,M) = 0\}$. We define the \emph{right-perpendicular category}, $M^\perp$, dually. For a subcategory $\M \subset \Cat$, we define $\lperp{\M}$ and $\M^\perp$ analogously. If $M \in N^\perp \cap \lperp{N}$ we say the objects $M$ and $N$ are \emph{Hom orthogonal}. Likewise if the category $\Cat$ is triangulated, $N[1] \in M^\perp$, and $M[1] \in N^\perp$, we say $M$ and $N$ are \emph{Ext orthogonal}. We say $M$ and $N$ are \emph{Hom-Ext orthogonal} if they are both Hom and Ext orthogonal. We denote by $\ind(\Cat)$ the category of indecomposable objects of $\Cat$.

\subsection*{Organization and Main Results}
The contents of this paper are as follows. In Section \ref{sec:background}, we recall the definitions and preliminary results we will use regarding semibricks, 2-simple minded collections, and gentle algebras. We also give the definition of a semibrick pair, which drops from the definition of a 2-simple minded collection the assumption that it generates the bounded derived category.

In Section \ref{sec:mutationcompatibility}, we define a notion of mutation for certain well-behaved semibrick pairs that agrees with that for 2-simple minded collections. This allows us to prove our first main theorem.

\begin{customthm}{A}[Theorem \ref{thm:mutationcomplete}]
	Let $\Lambda$ be $\tau$-tilting finite. Then a semibrick pair of $\Lambda$ is a subset of a 2-simple minded collection if and only if it is \emph{mutation compatible} (see Definitions \ref{def:mutation}, \ref{def:mutcompat}).
\end{customthm}

We conclude Section \ref{sec:mutationcompatibility} by showing that the 2-simple minded collections of representation finite hereditary algebras can be defined using pairwise compatibility conditions, giving a new proof of a result of \cite{igusa_signed}.

In Section \ref{sec:cyclics}, we discuss a class of algebras we refer to as \emph{Nakayama-like algebras} (Definition \ref{def:nakayamalike}). In particular, we use our results on mutation compatibility to prove our second main theorem.

\begin{customthm}{B}[Theorem \ref{thm:cyclichaveproperty}]
Let $\Lambda$ be a \emph{Nakayama-like algebra}. Then the 2-simple minded collections of $\Lambda$ can be defined using pairwise compatibility conditions.
\end{customthm}

This implies a known result, namely this holds from \cite{hanson_tau} in the case that $\Lambda$ is Nakayama and from \cite{igusa_signed}, \cite{igusa_category} in the case that $\Lambda \cong KA_n$.

In Section \ref{sec:gentleclassification}, we use our results on mutation compatibility to prove the central theorem of this paper, disproving a conjecture from \cite{hanson_tau}.

\begin{customthm}{C}[Theorem \ref{thm:gentleclassification}]
	Let $\Lambda = KQ/I$ be a $\tau$-tilting finite gentle algebra such that $Q$ contains no loops or 2-cycles. Then the 2-simple minded collections of $\Lambda$ can be defined using pairwise compatibility conditions if and only if every vertex of $Q$ has degree at most 2.
\end{customthm}

Amongst the simplest algebras for which the 2-simple minded collections cannot be defined using pairwise compatibility conditions are cluster tilted algebras of type $A_n$ for $n \geq 4$. This is in contrast to the cyclic cluster tilted algebras of type $D_n$, which were shown to have this property in \cite{hanson_tau}.

In Section \ref{sec:application}, we discuss picture groups and construct faithful group functors for Nakayama-like and gentle algebras. We do so by constructing a group homomorphism from the picture group to the group of units of the \emph{power series 0-Hall algebra} of $\Lambda$ (Definition \ref{def:hallalg}), a variant of the Hall algebra constructed by evaluating the Hall polynomials at $q=0$. This, together with the results on pairwise compatibility, allows us to prove our final main theorem.

\begin{customthm}{D}[Corollary \ref{cor:kpi1}]\
	\begin{enumerate}[label=\upshape(\alph*)]
		\item Let $\Lambda$ be a Nakayama-like algebra. Then the classifying space of the \emph{$\tau$-cluster morphism category} of $\Lambda$ is a $K(\pi,1)$ for the picture group of $\Lambda$.
		\item Let $\Lambda = KQ/I$ be a $\tau$-tilting finite gentle algebra such that $Q$ contains no loops or 2-cycles. Then the classifying space of the \emph{$\tau$-cluster morphism category} of $\Lambda$ is a $K(\pi,1)$ for the picture group of $\Lambda$ if every vertex of $Q$ has degree at most 2.
	\end{enumerate}
\end{customthm}

\section{Background}\label{sec:background}

Recall that a subcategory $\T \subset \mods\Lambda$ is called a \emph{torsion class} if it is closed under extensions and factors. Likewise, a subcategory $W \subset \mods\Lambda$ is called a \emph{wide subcategory} if it is closed under extensions, kernels, and cokernels. We denote by $\tors\Lambda$ (resp. $\wide\Lambda$) the poset of torsion classes (resp. wide subcategories) ordered by inclusion. We assume throughout this paper that $\tors\Lambda$ is a finite set, or equivalently (see \cite[Thm 3.8]{demonet_tilting}) that $\Lambda$ is $\tau$-tilting finite.

\subsection{Semibricks and 2-Simple Minded Collections}

Recall that a (necessarily indecomposable) object $S \in \mods\Lambda$ (or more generally $\Db(\mods\Lambda)$) is called a \emph{brick} if $\End(S)$ is a division algebra. A set of bricks $\S$ is called a \emph{semibrick} if it consists of pairwise Hom-orthogonal bricks. Depending on context, the term semibrick can refer to either the set $\S$ or the object $\displaystyle \bigsqcup_{S\in\S}S \in \Db(\mods\Lambda)$. We denote by $\brick\Lambda$ (resp. $\sbrick\Lambda$) the set of isoclasses of bricks (resp. semibricks) in $\mods\Lambda$. Central to this paper are the closely related \emph{2-simple minded collections}, defined as follows by \cite[Rmk. 4.11]{brustle_ordered}.

\begin{define}
	Let $\X = \S_p \sqcup \S_n[1]$ with $\S_p, \S_n \in \sbrick\Lambda$. Then $\X$ is called a \emph{2-simple minded collection} if
	\begin{enumerate}[label=\upshape(\alph*)]
		\item For all $S \neq T \in \X$, we have $\Hom(S,T[m]) = 0$ for $m\in \{-1,0\}$. Equivalently, $\Hom(\S_p,\S_n[m])$ for $m \in \{0,1\}$.
		\item $\thick(\X) = \Db(\mods\Lambda)$, where $\thick(\X)$ is the smallest triangulated subcategory of $\Db(\mods\Lambda)$ containing $\X$ which is closed under direct summands.
	\end{enumerate}
\end{define}

\begin{rem}
	2-simple minded collections are examples of the more general \emph{simple-minded collections} (see \cite{koenig_silting}). In this general context, condition (a) is replaced with the requirement that $\Hom(S,T[m]) = 0 $ for $m \leq 0$. In our context, either $S$ or $S[-1]$ is a module (and likewise for $T$), so we need only consider $m \in \{-1,0\}$.
\end{rem}

We denote by $\smc\Lambda$ the set of isoclasses of 2-simple minded collections for $\Lambda$. This leads us to the following definition.

\begin{define}\cite[Def. 1.8]{hanson_tau}\label{def:sbp}
	Let $\S_p,\S_n \in \sbrick\Lambda$. We say that $\X = \S_p\sqcup \S_n[1]$ is a \emph{semibrick pair} if $\Hom(\S_p,\S_n[m]) = 0$ for $m \in \{0,1\}$ (or equivalently, $\Hom(S,T[m]) = 0$ for all $S\neq T \in \X$ and $m \in \{-1,0\}$). In particular, a semibrick pair $\X = \S_p\sqcup \S_n[1]$ is a 2-simple minded collection if and only if $\thick(\X) = \Db(\mods\Lambda)$. We say the semibrick pair $\S_p \sqcup \S_n[1]$ is \emph{completable} if it is a subset of a 2-simple minded collection.
\end{define}

It is in general difficult to determine when a semibrick pair is completable; we do, however, have the following, deduced from \cite[Thm. 2.3]{asai_semibricks}. We remark that as stated, this result depends on the fact that $\Lambda$ is $\tau$-tilting finite.

\begin{thm}\label{thm:semibrickcompletable}
	Let $\X = \S_p \sqcup \S_n[1]$ be a semibrick pair. If $\S_p = \emptyset$ or $\S_n = \emptyset$, then $\X$ is completable.
\end{thm}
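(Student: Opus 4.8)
The plan is to reduce the case $\S_p=\emptyset$ to the case $\S_n=\emptyset$ by duality, and then to settle the latter via Asai's bijection between semibricks and functorially finite torsion classes. For the reduction, recall that the $K$-linear duality $D=\Hom_K(-,K)$ extends to a contravariant triangle equivalence $\Db(\mods\Lambda)\to\Db(\mods\Lambda^{\mathrm{op}})$ carrying bricks to bricks and Hom-orthogonal sets to Hom-orthogonal sets; together with a shift it identifies $\smc\Lambda$ with $\smc\Lambda^{\mathrm{op}}$ while interchanging the positive and negative parts of a $2$-simple minded collection. Thus if $\X=\S_n[1]$ (so $\S_p=\emptyset$), then $(D\X)[1]=D\S_n$ is a semibrick pair over $\Lambda^{\mathrm{op}}$ with empty negative part, and $\Lambda^{\mathrm{op}}$ is again $\tau$-tilting finite; completing $D\S_n$ over $\Lambda^{\mathrm{op}}$ and transporting the result back through $D$ then completes $\X$. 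So it is enough to treat the case $\S_n=\emptyset$.

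Now assume $\S_n=\emptyset$, so $\X=\S_p$ is an honest semibrick in $\mods\Lambda$. By \cite[Thm.~2.3]{asai_semibricks}, the assignment $\S\mapsto\T(\S):=\Filt(\Fac\S)$ is a bijection from the \emph{left finite} semibricks of $\Lambda$ to the functorially finite torsion classes of $\Lambda$, with inverse sending a torsion class to its set of Ext-minimal (``simple'') bricks. Since $\Lambda$ is $\tau$-tilting finite, every torsion class is functorially finite, so every semibrick --- and $\S_p$ in particular --- is automatically left finite; hence $T:=\T(\S_p)$ is a functorially finite torsion class whose associated semibrick is $\S_p$. Composing with the standard bijection between functorially finite torsion classes and $2$-simple minded collections (see \cite{brustle_ordered,asai_semibricks}), the torsion class $T$ corresponds to a $2$-simple minded collection whose module part is precisely the semibrick attached to $T$, namely $\S_p$. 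Therefore $\X=\S_p$ sits inside that $2$-simple minded collection, so $\X$ is completable.

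The only non-formal step is the last one: one must check that the semibrick which Asai's correspondence attaches to a functorially finite torsion class $T$ really coincides with the set of modules appearing in the $2$-simple minded collection attached to $T$. Concretely this amounts to identifying the Ext-minimal bricks of $T$ with the ``simple'' objects of the category obtained by HRS-tilting at $T$; this is where the details should be written out, using the constructions already available in \cite{asai_semibricks} and the literature cited there. Everything else --- the duality reduction and the deletion of the ``left finite'' hypothesis --- follows immediately from $\tau$-tilting finiteness, which is precisely why this form of the statement requires that assumption.
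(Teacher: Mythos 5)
Your proof is correct and is essentially the paper's own argument: the paper offers no proof beyond deducing the statement from \cite[Thm.~2.3]{asai_semibricks}, and your deduction --- $\tau$-tilting finiteness makes every semibrick left finite, hence the positive part of a 2-simple minded collection --- is exactly that deduction, with your duality reduction to $\Lambda^{\mathrm{op}}$ simply replacing an appeal to the dual (right-finite) half of Asai's theorem. The compatibility you flag as the only non-formal step is not a genuine gap, since it is part of the cited statement: Asai's bijections identify the module part $\X \cap \mods\Lambda$ of a 2-simple minded collection with the left finite semibrick attached to the corresponding functorially finite torsion class.
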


We conclude this section with the following result describing the relationship between semibricks and torsion classes. As before, our version of this statement requires that $\Lambda$ be $\tau$-tilting finite.

\begin{prop}\cite[Prop. 3.2.5, Cor. 3.2.7]{barnard_minimal}
	There is a bijection $\sbrick\Lambda \rightarrow \tors\Lambda$ given by $\S \mapsto \Filt(\Fac\S)$.
\end{prop}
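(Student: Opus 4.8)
The plan is to realize the map $\S \mapsto \Filt(\Fac\S)$ as the composite of two more elementary bijections, $\sbrick\Lambda \leftrightarrow \wide\Lambda \leftrightarrow \tors\Lambda$, and to read off the claimed statement from these. \textbf{Well-definedness} is the easy part. Writing $T(\S):=\Filt(\Fac\S)$, closure under extensions is immediate, and closure under quotients follows by a short argument: if $M\in T(\S)$ has a filtration $0=M_0\subset M_1\subset\cdots\subset M_k=M$ with each $M_i/M_{i-1}\in\Fac\S$ and $\pi\colon M\twoheadrightarrow N$ is a surjection, then the submodules $\pi(M_i)$ filter $N$ with subquotients that are quotients of $M_i/M_{i-1}$, hence again lie in $\Fac\S$. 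So $T(\S)\in\tors\Lambda$; in fact it is the smallest torsion class containing $\S$.

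\textbf{From semibricks to wide subcategories.} I would next invoke Ringel's theorem that, for any semibrick $\S$, the category $W(\S):=\Filt(\S)$ is a wide subcategory which is moreover an abelian length category whose simple objects are precisely the members of $\S$; thus $\S$ is recovered as $\sbrick(W(\S))$ and $\S\mapsto W(\S)$ is injective. To recover $W(\S)$ from $T(\S)$ I would use the wide subcategory attached to a torsion class $\T$ in the sense of Ingalls--Thomas (and, beyond the hereditary case, Marks--\v{S}\v{t}ov\'i\v{c}ek and Jasso),
\[
	W_L(\T):=\{\, X\in\T \;\mid\; \ker g\in\T \text{ for every } g\colon Y\to X \text{ with } Y\in\T \,\},
\]
and prove the three facts: (i) $W_L(\T)$ is always a wide subcategory; (ii) $W_L(T(\S))=W(\S)$; and (iii) $\Filt(\Fac\, W_L(\T))=\T$ for every $\T\in\tors\Lambda$. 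Granting (i)--(iii), injectivity of $\S\mapsto T(\S)$ is immediate, since $T(\S)=T(\S')$ forces $W(\S)=W_L(T(\S))=W_L(T(\S'))=W(\S')$, whence $\S=\sbrick(W(\S))=\S'$. (This is the same bijection recorded in \cite{barnard_minimal}, where the inverse is instead phrased via minimal extending modules / Hasse-arrow labels; the two descriptions of the semibrick attached to $\T$ agree.)

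\textbf{Surjectivity.} Given $\T\in\tors\Lambda$, put $W:=W_L(\T)$. Because $\Lambda$ is $\tau$-tilting finite, $W$ is again a $\tau$-tilting finite abelian length category, so its simple objects form a finite semibrick $\S:=\sbrick(W)$ with $\Filt(\S)=W$. The inclusion $\Filt(\Fac\S)\subseteq\T$ is clear since $\S\subseteq W\subseteq\T$ and $\T$ is closed under factors and extensions. For the reverse inclusion, note $\Filt(\Fac\, W)=\Filt(\Fac(\Filt\S))=\Filt(\Fac\S)$, where the middle equality uses again that $\Filt(\Fac\S)$ is closed under quotients; so $\T\subseteq\Filt(\Fac\S)$ is precisely fact (iii), and therefore $T(\S)=\T$.

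\textbf{Main obstacle.} The hard part will be (iii): that every module in $\T$ is filtered by factors of the simple objects of $W_L(\T)$. This is where $\tau$-tilting finiteness is genuinely needed, and I would establish it by induction on the finite poset $\tors\Lambda$ — for $\T\neq\mods\Lambda$, compare $\T$ with a torsion class covering it in $\Hasse(\tors\Lambda)$, peel off the brick label of that cover, and apply the inductive hypothesis — or, alternatively, by writing $\T=\Fac M$ for the Ext-projective generator $M$ of $\T$ and repeatedly stripping off quotients lying in $\Fac(W_L(\T))$. Items (i) and (ii) are by now routine verifications available in the literature (Ingalls--Thomas, Marks--\v{S}\v{t}ov\'i\v{c}ek, Jasso, Asai), which I would cite rather than reprove.
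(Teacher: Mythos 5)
The paper does not actually prove this proposition: it is imported wholesale from Barnard--Carroll--Zhu (\cite[Prop.~3.2.5, Cor.~3.2.7]{barnard_minimal}), so there is no internal argument to compare yours against. Your architecture --- factor the map through $\sbrick\Lambda \leftrightarrow \wide\Lambda$ via Ringel's theorem and then relate $\wide\Lambda$ to $\tors\Lambda$ via $\beta(W)=\Filt(\Fac W)$ and $\alpha(\T)=W_L(\T)$ --- is the standard alternative route (Ingalls--Thomas, Marks--\v{S}\v{t}ov\'i\v{c}ek, Asai), and it is different in flavor from BCZ's own argument via brick labels and minimal extending modules. Your well-definedness argument is correct, and injectivity does follow from (i)--(ii), which hold for arbitrary finite-dimensional algebras ($\alpha\circ\beta=\mathrm{id}$ on wide subcategories), so up to that point the plan is sound.

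The genuine gap is item (iii), and it cannot be waved at: $\Filt(\Fac\,W_L(\T))=\T$ for all $\T\in\tors\Lambda$ is not an auxiliary lemma but is literally equivalent to the surjectivity you are trying to prove (it says $\beta\circ\alpha=\mathrm{id}$ on $\tors\Lambda$, which fails without $\tau$-tilting finiteness), so ``establish it by induction'' is where the entire content lives, and the induction you sketch does not close. If $\T''\lessdot\T$ is a cover with brick label $S$, the inductive hypothesis gives $\T''=\Filt(\Fac\,W_L(\T''))$, but $W_L(\T'')$ bears no useful relation to $W_L(\T)$ (wide parts of nested torsion classes are not nested), so you are still left needing $\T''\subseteq\Filt(\Fac\,W_L(\T))$, which is exactly the original problem one step down; moreover, even the fact that the label $S$ of a cover lies in (indeed is a simple object of) $W_L(\T)$ is itself one of the nontrivial ingredients you would have to prove --- it is the brick-labeling theorem of BCZ/DIRRT, quoted in this paper as Theorem \ref{thm:polygons}(b),(d). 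The route that does work is your second alternative, made precise: using $\tau$-tilting finiteness write $\T=\Fac M$ for the Ext-projective (support $\tau$-tilting) generator $M$, set $\S:=M/\rad_{\End(M)}M$, and show $\Fac M=\Filt(\Fac\S)$ --- this is Asai's proof --- or else follow BCZ's minimal-extending-module argument directly. As written, the proposal stops short of a proof precisely at this decisive step.
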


\subsection{String and Gentle Algebras}
Two of the central classes of algebras studied in this paper are string algebras and the subclass of gentle algebras. We begin with their definitions.

\begin{define}\
	\begin{enumerate}[label=\upshape(\alph*)]
	\item A finite dimensional algebra $\Lambda = KQ/I$ is called a \emph{string algebra} if
		\begin{enumerate}[label=\upshape(\roman*)]
			\item The ideal $I$ is generated by monomials.
			\item Every vertex of $Q$ is the source of at most two arrows and the target of at most two arrows.
			\item For every arrow $\alpha \in Q_1$, there is at most one $\beta \in Q_1$ such that $\alpha\beta \notin I$ and at most one $\gamma \in Q_1$ such that $\gamma\alpha \notin I$.
		\end{enumerate}
	\item A string algebra is called a \emph{gentle algebra} if
		\begin{enumerate}[label=\upshape(\roman*)]
			\item The ideal $I$ is generated by paths of length two.
			\item For every arrow $\alpha \in Q_1$, there is at most one $\beta \in Q_1$ such that $0 \neq \alpha\beta \in I$ and at most one $\gamma \in Q_1$ such that $0 \neq \gamma\alpha \in I$.
		\end{enumerate}
	\end{enumerate}
\end{define}

In this paper, we will be interested only in gentle algebras whose quivers contain no loops or oriented 2-cycles. As we are assuming our algebras are all $\tau$-tilting finite, we have already excluded all algebras whose quivers contain multiple arrows with the same source and target. Under these restrictions, we see that an algebra $\Lambda = KQ/I$ is gentle if and only if for all $x \in Q_0$, the local picture at $x$ containing all arrows incident to $x$ is a subquiver of the following, where the dotted lines represent relations and all five vertices are distinct.

\begin{center}
\begin{tikzpicture}
	\node at (0,0.75) {1};
	\node at (0,-0.75) {4};
	\node at (1,0) {$x$};
	\node at (2,-0.75) {3};
	\node at (2,0.75) {2};
	\draw [->] (0.2,0.6)--(0.8,0.1);
	\draw [->] (0.2,-0.6)--(0.8,-0.1);
	\draw [->] (1.2,-0.1)--(1.8,-0.6);
	\draw [->] (1.2,0.1)--(1.8,0.6);
	\draw[thick,dotted] plot [smooth] coordinates{(0.5,-0.5)(1.5,-0.5)};
	\draw[thick,dotted] plot [smooth] coordinates{(0.5,0.5)(1.5,0.5)};
\end{tikzpicture}
\end{center}

The following standard definition allows for a simple description of the indecomposable modules of a string algebra.

\begin{define}\label{def:strings}
	Let $\Lambda = KQ/I$ be a string algebra. A sequence $\gamma_1^{\epsilon_1}\cdots\gamma_m^{\epsilon_m}$, where each $\gamma_i \in Q_1$ and $\epsilon_i \in \{\pm 1\}$ is called a \emph{string} if
	\begin{enumerate}[label=\upshape(\alph*)]
		\item There is no subsequence of the form $\gamma_i^{+1}\gamma_i^{-1}$ or $\gamma_i^{-1}\gamma_i^{+1}$
		\item Considering $\gamma_i^{+1}$ as the arrow $\gamma_i$ and $\gamma_i^{-1}$ as its formal inverse, we have $t(\gamma_i^{\epsilon_i}) = s(\gamma_{i+1}^{\epsilon_{i+1}})$ for $1 \leq i < m-1$.
		\item No subsequence (or its inverse) belongs to the ideal $I$.
	\end{enumerate}
	We also allow for sequences of the form $e_j$, the constant string at the vertex $j \in Q_0$.
\end{define}

We will consider string algebras which are representation finite. In this case, it is a well-known result of \cite{butler_auslander} (see also \cite{gelfand_indecomposable}) that there is a bijection between strings of $KQ/I$ (up to taking inverses) and isoclasses of $\ind(\mods\Lambda)$. This bijection allows for a nice combinatorial description of the morphisms between irreducible modules, given first more generally in \cite{crawley_maps} and later for string algebras in \cite{schroer_modules}. In lieu of summarizing this construction here, we will cite results about the construction as necessary. We refer interested readers to \cite[Sec. 2]{brustle_combinatorics}, which contains a well-written summary.

We end this section with the following result that will be crucial in what follows.

\begin{thm}\cite[Thm. 1.1]{plamondon_tilting}\label{thm:finitegentle}
	A gentle algebra is $\tau$-tilting finite if and only if it is representation finite.
\end{thm}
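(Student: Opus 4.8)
The plan is to prove the two implications separately.

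\emph{Representation finite implies $\tau$-tilting finite.} This needs no gentleness. A basic support $\tau$-tilting module has at most $|Q_0|$ pairwise non-isomorphic indecomposable summands, so if $\mods\Lambda$ has only finitely many indecomposables then there are only finitely many isoclasses of basic modules, hence only finitely many support $\tau$-tilting modules, and $\Lambda$ is $\tau$-tilting finite by definition. (Equivalently, $\brick\Lambda$ is finite, whence $\Lambda$ is $\tau$-tilting finite by \cite{demonet_tilting}.)

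\emph{$\tau$-tilting finite implies representation finite.} I would argue the contrapositive: assuming $\Lambda = KQ/I$ is a representation infinite gentle algebra, produce infinitely many bricks. This forces $\tau$-tilting infiniteness, since if $\Lambda$ were $\tau$-tilting finite then the bijection $\sbrick\Lambda \to \tors\Lambda$ of \cite{barnard_minimal} (valid under that hypothesis) together with the finiteness of $\tors\Lambda$ would bound the number of semibricks, hence the number of bricks (each singleton $\{S\}$ with $S \in \brick\Lambda$ being a semibrick) --- a contradiction. To obtain the bricks I would invoke the classical representation-type dichotomy for string algebras \cite{butler_auslander}: a string algebra is representation infinite if and only if it admits a \emph{band} $b$ (a cyclic string, not a proper power of a shorter cyclic string, all of whose powers are again strings, and involving both a direct and an inverse arrow). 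Each band $b$ carries a family of band modules $M(b,V)$ indexed by the finite-dimensional indecomposable $K[T,T^{-1}]$-modules $V$; for a \emph{gentle} algebra the quasi-simple ones (those with $V$ irreducible, e.g. $V = K[T,T^{-1}]/(f)$ with $f$ monic irreducible, $f \neq T$) are bricks, with endomorphism ring the field $\End_{K[T,T^{-1}]}(V) \cong K[T]/(f)$: the gentle relations --- in particular the absence of loops and $2$-cycles --- forbid the self-overlaps of $b$ that would otherwise contribute nilpotent ``graph endomorphisms'', as one reads off the combinatorial morphism calculus for string and band modules (see the summary in \cite[Sec.~2]{brustle_combinatorics}, building on \cite{crawley_maps,schroer_modules}). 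Since $K[T]$ has infinitely many monic irreducible polynomials over any field, $\brick\Lambda$ is infinite, completing the argument.

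The main obstacle is precisely this last step: confirming that these band modules are genuinely bricks over an \emph{arbitrary} base field $K$. Over an infinite field one can instead use the one-parameter family $M(b,\lambda,1)$, $\lambda \in K^\ast$, for which brickness is classical; over a finite field one must pass to the higher-dimensional irreducible $K[T,T^{-1}]$-modules above, or alternatively use that $\tau$-tilting finiteness is insensitive to field extension and reduce to $K$ algebraically closed, where the quasi-simple modules at the mouths of the infinitely many homogeneous tubes of the (tame) gentle algebra suffice. In every case the crux is to show the gentle hypothesis eliminates the nilpotent graph-map part of $\End(M(b,V))$; this is the only point at which one genuinely uses that $\Lambda$ is gentle rather than an arbitrary string algebra, and it rests on the string-combinatorial description of homomorphisms.
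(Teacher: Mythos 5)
The paper itself does not prove this statement; it is quoted from Plamondon, so I can only judge your argument on its own merits. Your easy direction is fine, and your reduction of the hard direction to exhibiting infinitely many bricks (via Demonet--Iyama--Jasso, or via the semibrick--torsion class bijection) is the right shape of argument. The gap is exactly at the point you flagged as the crux, and it is a real one: it is \emph{not} true that gentleness kills the ``graph'' endomorphisms of quasi-simple band modules, i.e.\ not every band over a gentle algebra is a brick band, so you cannot simply take an arbitrary band $b$ and declare the modules $M(b,\lambda,1)$ (or $M(b,V)$) to be bricks. (Your parenthetical ``in particular the absence of loops and $2$-cycles'' is also off target: the theorem allows loops and $2$-cycles, and the failure below happens without them.)

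Concretely, take $Q$ with vertices $1,2,3,4,x$ and arrows $\alpha\colon 1\to x$, $\beta\colon 2\to x$, $\gamma\colon x\to 3$, $\delta\colon x\to 4$, $v\colon 2\to 3$, $u\colon 4\to 1$, and $I=(\alpha\delta,\;\beta\gamma)$. This $KQ/I$ is gentle, has no loops or $2$-cycles, and is finite dimensional (the only oriented cycle is $x\to4\to1\to x$, and wrapping around it forces the subpath $\alpha\delta\in I$). The cyclic walk $w=\alpha\,\beta^{-1}\,v\,\gamma^{-1}\,\delta\,u$ is a band: its maximal direct runs are $\delta u\alpha$ and $v$, which avoid $I$, so all powers are strings. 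But $w$ passes through $x$ twice: at the node between $\gamma^{-1}$ and $\delta$ both adjacent arrows point away from the node, and at the node between $\alpha$ and $\beta^{-1}$ both point toward it. Hence $S_x$ is simultaneously a simple quotient and a simple submodule of $M(w,\lambda,1)$, and the composite $M(w,\lambda,1)\twoheadrightarrow S_x\hookrightarrow M(w,\lambda,1)$ is a nonzero rank-one endomorphism; so these band modules are not bricks, for any $\lambda$ and any field, and likewise for $M(w,V)$. (In this example the other traversal of the figure eight, $\alpha\,\gamma\,v^{-1}\,\beta\,\delta\,u$, \emph{is} a brick band, which illustrates that what is needed is the existence of a well-chosen band, or of infinitely many string bricks -- a genuinely nontrivial combinatorial statement that is in effect the content of the cited theorem, not something one ``reads off'' the morphism calculus.) A secondary, repairable issue: the claim that $\tau$-tilting finiteness is insensitive to base-field extension is not a fact you can simply invoke; your alternative via simple $K[T,T^{-1}]$-modules does avoid it, but only after the brickness problem above is resolved.
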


In particular, this means that in order for a gentle algebra to be $\tau$-tilting finite, every (not necessarily oriented) cycle of $Q$ must contain a relation.

\section{Mutation Compatibility of Semibrick Pairs}\label{sec:mutationcompatibility}

The goal of this section is to determine when a semibrick pair is completable. To do so, we define a notion of mutation for semibrick pairs. This intentionally mirrors the corresponding notion for 2-simple minded collections defined by Koenig and Yang in \cite[Sec. 7.2]{koenig_silting}. We begin with the following generalization of Lemma 7.8 in their paper. We remark that our proof depends on the fact that $\Lambda$ is $\tau$-tilting finite.

To simplify notation, for $T \in \mods\Lambda\sqcup \mods\Lambda[1]$ we denote by $|T|$ the underlying module of $T$; that is,
$$|T| = \begin{cases} T & T \in \mods\Lambda\\T[-1] & T \in \mods\Lambda[1]\end{cases}.$$

\begin{lem}\label{lem:approxconditions}
	Let $\X = \S_p \sqcup \S_n[1]$ be a semibrick pair. Then we have the following.
	\begin{enumerate}[label=\upshape(\alph*)]
		\item For $S \in \S_p$ and $S\neq T \in \X$, there exists a left minimal $(\Filt S)$-approximation $T[-1] \xrightarrow{g_{S,|T|}^+} S_{|T|}$. Moreover,
		\begin{enumerate}[label=\upshape(\roman*)]
			\item For any $X \in \Filt(S)$, the induced map $\Hom(S_{|T|},X) \rightarrow \Hom(T[-1],X)$ is a bijection.
			\item The induced map $\Hom(S_{|T|},S[1]) \rightarrow \Hom(T[-1],S[1])$ is injective.
		\end{enumerate}
		\item For $S[1] \in \S_n[1]$ and $S\neq T \in \X$, there exists a right minimal $(\Filt S)$-approximation $S_{|T|} \xrightarrow{g^-_{|T|,S}} T$. Moreover,
		\begin{enumerate}[label=\upshape(\roman*)]
			\item For any $X \in \Filt(S)$, the induced map $\Hom(X,S_{|T|}) \rightarrow \Hom(X,T)$ is a bijection.
			\item The induced map $\Hom(S,S_{|T|}[1]) \rightarrow \Hom(S,T[1])$ is injective.
		\end{enumerate}
	\end{enumerate}
\end{lem}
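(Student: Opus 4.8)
The plan is to prove part (a) and then obtain part (b) by a duality argument, since the two statements are related by passing to the opposite algebra (sending $\S_p$ to $\S_n[1]$ and reversing arrows). So I will focus on part (a).

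First I would construct the approximation. Since $\Lambda$ is $\tau$-tilting finite, the torsion class $\T_S := \Filt(\Fac S)$ is functorially finite, and more to the point $\Filt S$ is a length-finite (even finite) subcategory in which we can build minimal left approximations: for any module $M$, a left minimal $(\Filt S)$-approximation $M \to S_M$ exists because $\Filt S$ is closed under extensions and contains only finitely many indecomposables up to isomorphism (here $\tau$-tilting finiteness is exactly what guarantees $\brick\Lambda$, hence $\Filt S$, is finite). Applying this to $M = |T|$ (shifted appropriately: if $T \in \S_n[1]$ then $T[-1] = |T| \in \mods\Lambda$, and if $T \in \S_p$ then $T[-1] = |T|[-1]$, and in either case $\Hom(T[-1], S_{|T|})$ is computed as $\Hom$ or $\Ext^1$ of modules — I should be careful to note that when $T \in \S_p$, $T[-1] \xrightarrow{g} S_{|T|}$ corresponds to an element of $\Ext^1(|T|, S_{|T|})$, i.e. a short exact sequence, and "left minimal $(\Filt S)$-approximation" in $\Db$ means the cokernel-triangle is a minimal one) gives the map $g_{S,|T|}^+$. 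The key input is that $\Hom(T[-1], S) = 0$: for $T \in \S_n[1]$ this is $\Hom(|T|, S) = 0$ from the semibrick pair condition (with $S \in \S_p$), and for $T \in \S_p$, $T \neq S$, this is $\Hom(|T|, S[-1]) = \Hom(|T|[1], S) = 0$ which holds since $\S_p$ is a semibrick (Hom-orthogonal), so in particular $\Hom(T[-1], X) = 0$ for every $X \in \Filt S$ by the long exact sequence / filtration argument. Wait — that would make the approximation trivially have $\Hom(T[-1], \Filt S) = 0$, which is not what (i) says; I need to recheck the indexing of shifts. The correct reading is that $g_{S,|T|}^+ : T[-1] \to S_{|T|}$ is the left minimal $(\Filt S)$-approximation in the derived category, and (i) says the induced $\Hom(S_{|T|}, X) \to \Hom(T[-1], X)$ is bijective for $X \in \Filt S$ — this is not automatic from vanishing of $\Hom(T[-1], S)$; rather it uses that the cone of $g^+$ lies in $\lperp{(\Filt S)}[1] \cap (\Filt S)^{\perp}$ or similar. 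So the real content is to identify the cone/fiber of the approximation and show it kills $\Hom(-, X)$ and controls $\Hom(-, S[1])$.

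Here is the cleaner route I would actually take, mirroring Koenig–Yang's Lemma 7.8. Form the triangle $T[-1] \xrightarrow{g^+} S_{|T|} \to C \to T$ where $g^+$ is the left minimal $(\Filt S)$-approximation (existence as above). Left minimality forces $C$ to have no summand in $\Filt S$; moreover, since $g^+$ is a $(\Filt S)$-approximation, applying $\Hom(-, X)$ for $X \in \Filt S$ to the triangle and using $\Hom(C, X) \to \Hom(S_{|T|}, X)$ surjective (approximation property) together with $\Hom(C[-1], X) = \Hom(T[-1], X) \ \text{??}$ — I would instead argue directly: the approximation property says every map $T[-1] \to X$ factors through $g^+$, giving surjectivity of $\Hom(S_{|T|}, X) \to \Hom(T[-1], X)$; injectivity follows because any $h : S_{|T|} \to X$ with $h g^+ = 0$ factors through $C[-1] \to S_{|T|}$, and one shows $\Hom(C[-1], X) = 0$ using that $C$ is built from $T$ and $S_{|T|} \in \Filt S$ and the semibrick-pair orthogonality $\Hom(S_p, S_n[m]) = 0$ for $m \in \{0,1\}$ — this is precisely where the hypothesis that $\X$ is a semibrick pair (not just $\S$ a semibrick) enters. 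For (ii), apply $\Hom(-, S[1])$ to the triangle: injectivity of $\Hom(S_{|T|}, S[1]) \to \Hom(T[-1], S[1])$ amounts to showing the connecting map $\Hom(C[-1], S[1]) = \Hom(C, S[2]) \to \Hom(S_{|T|}, S[1])$ is zero, which I would get from the fact that $S_{|T|} \in \Filt S$ so $\Hom(S_{|T|}, S[1]) $ sits inside $\Ext^1$ within the wide/extension-closed subcategory $\Filt S$ and the relevant map vanishes by the octahedral axiom relating it to $\Hom(T[-1], S[1])$.

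The main obstacle I anticipate is the bookkeeping of the four cases ($T \in \S_p$ vs. $T \in \S_n[1]$, crossed with part (a) vs. (b)), since in the derived category $\Hom(T[-1], -)$ is sometimes a module $\Hom$ and sometimes an $\Ext^1$, and "left minimal $(\Filt S)$-approximation" has to be interpreted correctly in each case (for $T \in \S_p$ it is genuinely a derived-category approximation encoding a short exact sequence of modules). I would handle this by first proving the module-theoretic statement ($T \in \S_n[1]$, so $T[-1] = |T| \in \mods\Lambda$ and everything is about modules and $\Ext^1$), using $\tau$-tilting finiteness to get the finite extension-closed subcategory $\Filt S$ and Wakamatsu-type minimality, and then deducing the shifted case formally. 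The second genuine subtlety is verifying that the cone $C$ really satisfies the needed vanishing $\Hom(C[-1], X) = 0$ and $\Hom(C, S[2]) \to \Hom(S_{|T|}, S[1])$ zero — for this I would use the long exact sequences attached to the defining triangle together with the semibrick-pair hypothesis and the fact, from the bijection $\sbrick\Lambda \to \tors\Lambda$, that $\Filt S$ behaves like a "local" module category in which $\Ext$ vanishing propagates through filtrations.
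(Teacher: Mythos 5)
Your outline has the right overall shape (construct the approximation, then use its defining triangle and the semibrick-pair orthogonality), but several of the concrete steps are either missing or incorrect, and they are exactly the points where the paper's proof does real work. First, the case $S \neq T$ with both $S,T \in \S_p$ is not something you can ``deduce formally'' from the module case: there the map $T[-1] \to S_{|T|}$ is a universal extension in the derived category, and its existence and minimality require their own argument. The paper disposes of this case by observing that $S \sqcup T$ is a semibrick, hence completable by Theorem \ref{thm:semibrickcompletable} (this is where $\tau$-tilting finiteness enters again), and then invoking \cite[Lem.~7.8]{koenig_silting} directly; your proposal contains no substitute for this reduction. Second, your justification for existence of the minimal left approximation in the remaining case ($T \in \S_n[1]$, so $T[-1]$ is a module) is unsupported: brick-finiteness of $\Lambda$ does not imply that $\Filt S$ has finitely many indecomposables (bricks may have self-extensions, and $\Filt S$ is only a length category with one simple). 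What is actually used is that the \emph{wide} subcategory $\Filt S$ is functorially finite for a $\tau$-tilting finite algebra, via \cite[Cor.~3.11]{marks_torsion} and \cite[Thm.~3.8]{demonet_tilting}.

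Third, your arguments for (i) and (ii) do not go through as written. For (i), a map $h\colon S_{|T|} \to X$ with $h\circ g^+ = 0$ factors through the cone $C$ of $g^+$, not through a map ``$C[-1] \to S_{|T|}$'' (no such map occurs in the triangle), and $\Hom(C[-1],X)$ is not the group you need to kill; moreover $\Hom(C,X)$ need not vanish, so this route stalls. The correct argument in the module case is elementary and avoids the cone entirely: $\image(g^+_{S,|T|}) \subseteq \ker(h)$, and $\ker(h) \in \Filt S$ because $\Filt S$ is wide, so left minimality of the approximation forces $\ker(h) = S_{|T|}$, i.e.\ $h = 0$. For (ii), the connecting term in the long exact sequence is $\Hom(C,S[1])$, not $\Hom(C,S[2])$, and ``the relevant map vanishes by the octahedral axiom'' is not an argument. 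The paper instead realizes $\eta \in \Ext(S_{|T|},S)$ as a short exact sequence $S \hookrightarrow E \xrightarrowtwohead{q} S_{|T|}$ with $E \in \Filt S$, uses $\eta\circ g^+ = 0$ to get $h\colon T[-1]\to E$ with $q h = g^+$, and then applies the bijections of (i) twice (with $X = E$ and $X = S_{|T|}$) to produce $h'\colon S_{|T|}\to E$ with $qh' = \mathrm{id}$, so $\eta$ splits. You should repair these two steps (or genuinely reproduce the Koenig--Yang argument) before the lemma can be considered proved; the dual statement (b) can then indeed be handled by the symmetric argument, as you propose.
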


\begin{proof}
	We show only (a) as the proof of (b) is similar. Let $S \in \S_p$ and $S \neq T \in \X$. If $T \in \S_p$, then $S \sqcup T$ is completable by Theorem \ref{thm:semibrickcompletable}, and hence the result follows from \cite[Lem. 7.8]{koenig_silting}. Thus, we can assume $T \in \S_n[1]$, so both $S$ and $T[-1]$ are modules. Moreover, since $\Lambda$ is $\tau$-tilting finite, the wide subcategory $\Filt S$ is functorially finite by \cite[Cor. 3.11]{marks_torsion} and \cite[Thm. 3.8]{demonet_tilting}. Therefore, there exists a left minimal $(\Filt S)$-approximation $T[-1] \xrightarrow{g_{S,|T|}^+} S_T$.
	
	(i): Let $X \in \Filt(S)$ and let $f : S_{|T|} \rightarrow X$ such that $f\circ g^+_{S,|T|} = 0$. This means $\image(g_{S,|T|}^+) \subset \ker(f)$. Moreover, we see that $\ker(f) \in \Filt S$. Thus since $g^+_{S,|T|}$ is a left minimal $(\Filt S)$-approximation, it must be the case that $\ker(f) = S_{|T|}$ and hence $f = 0$. This means the induced map $\Hom(S_{|T|},X) \rightarrow \Hom(T[-1],X)$ is injective (and hence is a bijection by the definition of a left minimal $(\Filt S)$-approximation).
	
	
	(ii): Suppose there exists $\eta \in \Hom(S_{|T|},S[1]) = \Ext(S_{|T|},S)$ such that $\eta\circ g_{S,|T|}^+ = 0 \in \Hom(T[-1],S[1]) = \Ext(T[-1],S)$. We consider $\eta$ as a short exact sequence $S\hookrightarrow E\xrightarrowtwohead{q} S_{|T|}$ as shown below. Then $\eta\circ g_{S,|T|}^+=0$ means there exists $h:T[-1]\to E$ so that $q\circ h=g_{S,|T|}^+$. But $\Hom(S_{|T|},E)\cong\Hom(T[-1],E)$ by (i), so there is a unique morphism $h':S_{|T|}\to E$ so that $h'\circ g_{S,|T|}^+=h$. But then $q\circ h=q\circ h'\circ g_{S,|T|}^+=g_{S,|T|}^+$. Since $\Hom(T[-1],S_{|T|})\cong\Hom(S_{|T|},S_{|T|})$ by (i), we conclude that $q\circ h'$ must be the identity on $S_T$. Thus, $\eta=0$ (i.e., the corresponding short exact sequence is split).
	
	\begin{center}
		\begin{tikzcd}
		S \arrow[r, hook] \arrow[d, equal] & E' \arrow[d] \arrow[r, two heads] & T[-1] \arrow[d, "g_{S,|T|}^+"] \arrow[dl, dashed, "h" above]\\
		S \arrow[r, hook] & E \arrow[r, two heads, "q"] & S_{|T|} \arrow[l, dashed, bend left, "h'"]\\
		\end{tikzcd}
	\end{center}
\end{proof}

We are now ready to define mutation.

\begin{define}\label{def:mutation}\
	\begin{enumerate}[label=\upshape(\alph*)]
		\item \cite[Def. 1.8]{hanson_tau} Let $\S_p \sqcup \S_n[1]$ be a semibrick pair. We say that $\S_p \sqcup \S_n[1]$ is \emph{singly left mutation compatible} at $S \in \S_p$ if for all $T \in \S_n$, a left minimal $(\Filt S)$-approximation $g^+_{S,T}: T \rightarrow S_T$ is either a monomorphism or an epimorphism. We say $\S_p \sqcup \S_n[1]$ is singly left mutation compatible\footnote{In \cite{hanson_tau} the term \emph{mutation compatible semibrick pair} is used. We have reserved this terminology for a more subtle property.} if it is singly left mutation compatible at every $S \in \S_p$.
		\item Let $\X = \S_p \sqcup \S_n[1]$ be singly left mutation compatible at $S \in \S_p$. We define the \emph{left mutation} of $\X$ at $S$, denoted $\mu^+_{S,\X}$, as follows.
			\begin{itemize}
				\item $\mu^+_{S,\X}(S) := S[1]$.
				\item For $S \neq T \in \S_p,$ define $\mu^+_{S,\X}(T) := \cone(g^+_{S,T})$, where $g^+_{S,T}:T[-1]\rightarrow S_T$ is a left minimal $(\Filt S)$-approximation. In particular, there is an exact sequence $S_T\hookrightarrow \mu^+_{S,\X}(T) \twoheadrightarrow T$.
				\item For $T \in \S_n,$ define $\mu^+_{S,\X}(T[1]) := \cone(g^+_{S,T})$, where $g^+_{S,T}:T\rightarrow S_T$ is a left minimal $(\Filt S)$-approximation. In particular, if $g^+_{S,T}$ is mono, then $\mu^+_{S,\X}(T[1]) = \coker(g^+_{S,T})$ and if $g^+_{S,T}$ is epi, then $\mu^+_{S,\X}(T[1]) = \ker(g^+_{S,T})[1]$.
			\end{itemize}
		\item Let $\S_p \sqcup \S_n[1]$ be a semibrick pair. We say that $\S_p \sqcup \S_n[1]$ is \emph{singly right mutation compatible} at $S \in \S_n$ if for all $T \in \S_p$, a right minimal $(\Filt S)$-approximation $g^-_{S,T}: S_T \rightarrow T$ is either a monomorphism or an epimorphism. We say $\S_p \sqcup \S_n[1]$ is singly right mutation compatible if it is singly right mutation compatible at every $S \in \S_n$.
		\item Let $\X = \S_p \sqcup \S_n[1]$ be a singly right mutation compatible at $S \in \S_n$. We define the \emph{right mutation} of $\X$ at $S$, denoted $\mu^-_{S,\X}$, as follows.
			\begin{itemize}
				\item $\mu^-_{S,\X}(S[1]) := S$.
				\item For $S \neq T \in \S_n,$ define $\mu^-_{S,\X}(T[1]) := \cocone(g^-_{S,T})[1]$, where $g^-_{S,T}:S_T \rightarrow T[1]$ is a right minimal $(\Filt S)$-approximation. In particular, there is an exact sequence $T\hookrightarrow \mu^-_{S,\X}(T)[-1] \twoheadrightarrow S_T$.
				\item For $T \in \S_p,$ define $\mu^-_{S,\X}(T) := \cocone(g^-_{S,T})[1]$, where $g^-_{S,T}: S_T \rightarrow T$ is a right minimal $(\Filt S)$-approximation. In particular, if $g^-_{S,T}$ is mono, then $\mu^-_{S,\X}(T) = \coker(g_{S,T})^-$ and if $g_{S,T}^-$ is epi, then $\mu^-_{S,\X}(T) = \ker(g^-_{S,T})[1]$.
			\end{itemize}
	\end{enumerate}
\end{define}

\begin{rem}\label{rem:mono epi motivation}
The assumption that, for $T \in \S_n$, the map $g^+_{S,T}$ is mono or epi is equivalent to $\mu^+_{S,\X}(T) \in \mods\Lambda\sqcup\mods\Lambda[1]$.
\end{rem}

\begin{rem}\label{rem:mutationagrees}
	If $\X = \S_p \sqcup \S_n[1] \in \smc\Lambda$ then $\X$ is both singly left mutation compatible and singly right mutation compatible. In this case, the new definitions of mutation agree with those for 2-simple minded collections. Moreover, these definitions are pairwise in the following sense. Suppose $\X = \S_p\sqcup \S_n[1]$ and $\X' = \S'_p \sqcup \S'_n[1]$ are semibrick pairs with $\X \subset \X'$ and let $S \in \S'_p$. If $\X'$ (and hence $\X$) is singly left mutation compatible at $S$, then for $S \neq T \in \X$, we have $\mu_{S,\X}^+(T) = \mu_{S,\X'}^+(T)$.
\end{rem}

\begin{prop}\label{prop:mutation}
	Let $\X = \S_p \sqcup \S_n[1]$ be a semibrick pair which is singly left mutation compatible at $S \in \S_p$. Then
	\begin{enumerate}[label=\upshape(\alph*)]
		\item $\mu^+_{S,\X}(\X)$ is a semibrick pair which is singly right mutation compatible at $S$.
		\item $\mu^-_{S,\mu^+_{S,\X}(\X)}\circ\mu^+_{S,\X}(\X) = \X$.
	\end{enumerate}
	Likewise, the dual result holds for any singly right mutation compatible semibrick pair.
\end{prop}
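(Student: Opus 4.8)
The plan is to verify (a) and (b) by unwinding Definition~\ref{def:mutation} and checking the defining axioms of a semibrick pair (Definition~\ref{def:sbp}) directly, using the $\Hom$-bijection and $\Hom$-injectivity statements from Lemma~\ref{lem:approxconditions} as the engine. First I would fix notation: write $\X' := \mu^+_{S,\X}(\X)$, so $\X' = \{S[1]\} \sqcup \{\mu^+_{S,\X}(T) : S \neq T \in \X\}$, and note that by Remark~\ref{rem:mono epi motivation} singly left mutation compatibility guarantees each $\mu^+_{S,\X}(T)$ lies in $\mods\Lambda \sqcup \mods\Lambda[1]$, so $\X'$ is at least a set of objects of the correct shape. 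I would then organize $\X'$ as $\S'_p \sqcup \S'_n[1]$: the new positive part contains the $\mu^+_{S,\X}(T)$ that are modules (these come from $T \in \S_p$ with $g^+_{S,T}$ not epi, plus $T[1] \in \S_n[1]$ with $g^+_{S,T}$ epi giving $\ker(g^+_{S,T})$... careful bookkeeping of which case lands where), and the new negative part the shifted ones, together with $S[1]$ itself.

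The core of (a) splits into two tasks. \emph{Task 1: each element of $\X'$ is a brick and the two new parts are semibricks.} For the objects $\mu^+_{S,\X}(T)$ arising as cones $S_T \hookrightarrow \mu^+_{S,\X}(T) \twoheadrightarrow T$ (or $T[-1] \to S_T \to \mu^+_{S,\X}(T)$), I would apply $\Hom(-, \mu^+_{S,\X}(T))$ and $\Hom(\mu^+_{S,\X}(T), -)$ to the defining triangle and use Lemma~\ref{lem:approxconditions}(i) — the bijection $\Hom(S_T, X) \cong \Hom(T[-1], X)$ for $X \in \Filt S$ — to kill the relevant $\Hom$ and $\Ext$ groups, deducing $\End$ is a division ring; this mirrors the standard argument of Koenig--Yang \cite[Sec.~7.2]{koenig_silting}, and the $\tau$-tilting-finite hypothesis only enters through Lemma~\ref{lem:approxconditions}, which is already available. \emph{Task 2: the semibrick pair condition $\Hom(\S'_p, \S'_n[m]) = 0$ for $m \in \{0,1\}$.} Here I would check each pair of distinct elements of $\X'$ using the long exact sequences obtained by applying $\Hom$ to the two triangles defining the relevant mutated objects, reducing every required vanishing to (I) the original semibrick pair relations in $\X$, (II) the bijections of Lemma~\ref{lem:approxconditions}(i), and (III) the injectivity statements of Lemma~\ref{lem:approxconditions}(ii) and (b)(ii). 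The pair $(S[1], \mu^+_{S,\X}(T))$ is the subtle one: one needs $\Hom(S[1], \mu^+_{S,\X}(T)[m])=0$ and $\Hom(\mu^+_{S,\X}(T), S[2])=0$ type conditions, which I expect to follow by applying $\Hom(S,-)$ to the triangle and invoking the minimality/approximation properties (a left minimal approximation has no $\Filt S$-summand in its cone splitting off). Having established $\X'$ is a semibrick pair, singly right mutation compatibility at $S$ amounts to showing that for each $S \neq T$ the right minimal $(\Filt S)$-approximation of $\mu^+_{S,\X}(T)$ is mono or epi; but the relevant approximation is, up to sign/shift, exactly the structure map in the triangle that defined $\mu^+_{S,\X}(T)$, so this is essentially bookkeeping once one checks that map is genuinely the minimal approximation — minimality transferring from $g^+_{S,T}$ via Lemma~\ref{lem:approxconditions}(i).

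For part (b), the inverse property, I would argue that $\mu^-_{S, \X'}$ undoes $\mu^+_{S,\X}$ object by object: $\mu^-_{S,\X'}(S[1]) = S$ by definition, and for $S \neq T$ the object $\mu^-_{S,\X'}(\mu^+_{S,\X}(T))$ is computed from a right minimal $(\Filt S)$-approximation of $\mu^+_{S,\X}(T)$, which — as identified in the previous step — sits in the same triangle $S_T \to \mu^+_{S,\X}(T) \to T$ used to build $\mu^+_{S,\X}(T)$; taking the appropriate (co)cone therefore returns $T$. The only thing to be careful about is that the $(\Filt S)$-approximation is \emph{minimal}, so that no stray summand in $\Filt S$ is introduced or lost; this is where Lemma~\ref{lem:approxconditions}(i) is used one more time. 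The dual statements for singly right mutation compatible semibrick pairs follow by the evident duality (replace $\mods\Lambda$ by $\mods\Lambda^{op}$, swap $\S_p$ and $\S_n$, reverse arrows), so no separate argument is needed.

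\textbf{Main obstacle.} I expect the genuine difficulty to be Task~2 for the pair involving $S[1]$ and a mutated module $\mu^+_{S,\X}(T)$ with $T \in \S_n$: one must rule out $\Hom$ and $\Ext$ between $S$ and $\ker(g^+_{S,T})$ (or $\coker(g^+_{S,T})$), and this is precisely where the hypothesis that $g^+_{S,T}$ is mono-or-epi (not merely a general approximation) is indispensable — the two cases genuinely behave differently and must be handled separately, each time extracting the needed vanishing from left-minimality of $g^+_{S,T}$ together with the injectivity in Lemma~\ref{lem:approxconditions}(a)(ii). Everything else is diagram-chasing of long exact sequences with the Lemma doing the heavy lifting.
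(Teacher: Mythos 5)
Your plan is correct and follows essentially the same route as the paper's proof: verify the Hom-orthogonality conditions pairwise by applying $\Hom$ to the defining triangles and invoking Lemma \ref{lem:approxconditions}, obtain brickness from the self-Hom case, use Remark \ref{rem:mono epi motivation} for membership in $\mods\Lambda\sqcup\mods\Lambda[1]$, and deduce (a)'s right compatibility and (b) by identifying the second map of each triangle as the right minimal $(\Filt S)$-approximation with cocone $T[-1]$. No substantive difference or gap to report (your ``$\Hom(\mu^+_{S,\X}(T),S[2])$'' should be $\Hom(\mu^+_{S,\X}(T),S[1])$ and $\Hom(\mu^+_{S,\X}(T),S)$, but this is only a bookkeeping slip).
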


The proof is similar to that of \cite[Prop. 7.6]{koenig_silting} and \cite[Thm. 6.2]{dugas_tilting}, but we include it here for completeness and to emphasize that the result does not depend on starting with a 2-simple minded collection.

\begin{proof}
	Let $\X = \S_p \sqcup \S_n[1]$ be a semibrick pair which is singly left mutation compatible at $S \in \S_p$. For $T\neq S \in \X$, let $T[-1]\xrightarrow{g^+_{S,|T|}}S_{|T|} \rightarrow \mu^+_{S,\X}(T) \rightarrow$ be the defining triangle for $\mu^+_{S,\X}(T)$.
	
	\emph{Claim 1: $\Hom(\mu^+_{S,\X}(S), \mu^+_{S,\X}(T)[m]) = 0$ for all $T \neq S \in \X$ and $m\in \{-1,0\}$}. Indeed, since $\mu^+_{S,\X}(S) = S[1] \in\mods\Lambda[1]$ and $\mu^+_{S,\X}(T) \in \mods\Lambda\sqcup \mods\Lambda[1]$, we have that $\Hom(\mu^+_{S,\X}(S), \mu^+_{S,\X}(T)[-1]) = 0$ automatically. Moreover, applying $\Hom(S[1],-)$ to the defining triangle for $\mu^+_{S,\X}(T)$, we have an exact sequence
	$$0 = \Hom(S[1], S_{|T|}) \rightarrow \Hom(S[1], \mu^+_{S,\X}(T)) \rightarrow \Hom(S[1], T[1]) = 0,$$
	which proves the claim.
	
	\emph{Claim 2: $\Hom(\mu^+_{S,\X}(T), \mu^+_{S,\X}(S)[m]) = 0$ for all $T \neq S \in \X$ and $m \in \{-1,0\}$.} Indeed, applying $\Hom(-, S[-1])$ to the defining triangle for $\mu^+_{S,\X}(T)$, we have an exact sequence
	$$0 = \Hom(T, S[-1]) \rightarrow \Hom(\mu^+_{S,\X}(T), S[-1]) \rightarrow \Hom(S_{|T|}, S[-1])= 0.$$
	Moreover, by Lemma \ref{lem:approxconditions}, we have an exact sequence
	\begin{eqnarray*}
		&&0= \Hom(T[-1], S[-1]) \rightarrow \Hom(\mu^+_{S,\X}(T), S)\xrightarrow{0}\Hom(S_{|T|},S)\rightarrow\Hom(T[-1],S)\xrightarrow{0}\\
		&&\xrightarrow{0} \Hom(\mu^+_{S,\X}(T), S[1]) \xrightarrow{0} \Hom(S_{|T|}, S[1]) \rightarrow \Hom(T[-1],S[1])
	\end{eqnarray*}
	This proves the claim since $\mu^+_{S,\X}(S) = S[1]$.
	
	\emph{Claim 3: $\Hom(\mu^+_{S,\X}(T), \mu^+_{S,\X}(T')[m]) \cong \Hom(T, T'[m])$ for $T, T' \neq S \in \X$} and $m \in \{-1,0\}$. Indeed, applying $\Hom(\mu^+_{S,\X}(T), -)$ to the defining triangle of $\mu^+_{S,\X}(T')$, we have an exact sequence
	\begin{eqnarray*}
		0 &=& \Hom(\mu^+_{S,\X}(T), S_{|T'|}[m]) \rightarrow \Hom(\mu^+_{S,\X}(T), \mu^+_{S,\X}(T')[m]) \rightarrow\\
		&\rightarrow& \Hom(\mu^+_{S,\X}(T), T'[m]) \rightarrow \Hom(\mu^+_{S,\X}(T), S_{|T'|}[m+1]) = 0
	\end{eqnarray*}
	for $m \in \{-1,0\}$ by Claim 2. Likewise, applying $\Hom(-, T')$ to the defining triangle of $\mu^+_{S,\X}(T)$, we have an exact sequence
	\begin{eqnarray*}
		0 &=& \Hom(S_{|T|}, T'[m-1]) \rightarrow \Hom(T[-1], T'[m-1]) \rightarrow\\
		&\rightarrow& \Hom(\mu^+_{S,\X}(T), T'[m]) \rightarrow \Hom(S_{|T|}, T'[m]) = 0
	\end{eqnarray*}
	for $m \in \{-1,0\}$. This proves Claim 3. In particular, taking $T = T'$, we have that $\mu^+_{S,\X}(T)$ is a brick.
	
	Remark \ref{rem:mono epi motivation}, together with Claims 1, 2, and 3, implies that $\mu^+_{S,\X}(\X)$ is a semibrick pair. To show that $\mu^+_{S,\X}(\X)$ is singly right mutation compatible at $S$ and $\mu^-_{S,\mu^+_{S,\X}(\X)}\circ\mu^+_{S,\X}(\X) = \X$, we observe that in the triangle $T[-1]\xrightarrow{g^+_{S,|T|}}S_{|T|} \xrightarrow{g^-_{S,|\mu^+_{S,\X}(T)|}} \mu^+_{S,\X}(T) \rightarrow$, the map $g^+_{S,|T|}$ is a left minimal ($\Filt S)$-approximation with cone $\mu^+_{S,\X}(T)$ if and only if the map $g^-_{S,|\mu^+_{S,\X}(T)|}$ is a right minimal $(\Filt S)$-approximation with cocone $T[-1]$.
\end{proof}

Before we propose our new definition of mutation compatibility for a semibrick pair, we need the following results.

\begin{lem}\label{lem:reachzero}\
	\begin{enumerate}[label=\upshape(\alph*)]
		\item Let $\X = \S_p \sqcup \S_n[1]$ be singly left mutation compatible at $S \in \S_p$. Write $\mu^+_{S,\X}(\X) = \S'_p \sqcup \S'_n[1]$. Then $\Filt\Fac(\S'_p) \subsetneq \Filt\Fac(\S_p)$.
		\item Let $\X = \S_p\sqcup \S_n[1]$ be a semibrick pair. If $\X$ is singly left mutation compatible and $\S_p \neq \emptyset$, choose $S \in \S_p$ and let $\X' = \mu^+_{S,\X}(\X)$. Repeat this process for $\X'$. Then after finitely many mutations, we reach a semibrick pair $\Y$ for which either $\Y$ is not singly left mutation compatible or $\Y = \S[1]$ for some $\S \in \sbrick\Lambda$.
	\end{enumerate}
\end{lem}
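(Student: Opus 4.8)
The plan is to prove (a) directly and then deduce (b) by iterating it, using that $\tors\Lambda$ is a finite poset.

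\emph{Part (a).} Write $\mu^+_{S,\X}(\X) = \S'_p \sqcup \S'_n[1]$. Recall that $\Filt\Fac(\S_p)$ is a torsion class (it is the image of $\S_p$ under the bijection $\sbrick\Lambda \cong \tors\Lambda$), hence closed under extensions and quotients. First I would establish the non-strict inclusion $\Filt\Fac(\S'_p) \subseteq \Filt\Fac(\S_p)$ by checking that every element of $\S'_p$ lies in $\Filt\Fac(\S_p)$: such an element is either $\mu^+_{S,\X}(T)$ for some $S \neq T \in \S_p$, which sits in a short exact sequence $S_T \hookrightarrow \mu^+_{S,\X}(T) \twoheadrightarrow T$ with $S_T \in \Filt(S) \subseteq \Filt\Fac(\S_p)$ and $T \in \Fac(\S_p) \subseteq \Filt\Fac(\S_p)$, so it lies in $\Filt\Fac(\S_p)$ by closure under extensions; or it is $\coker(g^+_{S,T})$ for some $T \in \S_n$ with $g^+_{S,T}$ mono, which is a quotient of $S_T \in \Filt(S) \subseteq \Filt\Fac(\S_p)$ and hence again lies in $\Filt\Fac(\S_p)$ by closure under quotients. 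Since $\Filt\Fac(\S_p)$ is a torsion class containing $\S'_p$, it contains the smallest one, $\Filt\Fac(\S'_p)$.

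It then remains to see the inclusion is strict, and this is the step I expect to be the main obstacle, since the non-strict part above makes no real use of the mutation being well-defined. Here I would invoke Proposition \ref{prop:mutation}(a): $\mu^+_{S,\X}(\X)$ is again a semibrick pair, and by construction $\mu^+_{S,\X}(S) = S[1]$, so $S \in \S'_n$. The semibrick pair axiom applied to any $S' \in \S'_p$ and to $S[1] \in \S'_n[1]$ gives $\Hom(S',S) = 0$. Since the full subcategory $\{N \in \mods\Lambda : \Hom(N,S) = 0\}$ is closed under quotients and extensions (a routine check), it is a torsion class containing $\S'_p$, hence contains $\Filt\Fac(\S'_p)$; as $\Hom(S,S) \neq 0$, this forces $S \notin \Filt\Fac(\S'_p)$. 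On the other hand $S \in \S_p \subseteq \Filt\Fac(\S_p)$, so the inclusion is proper, proving (a).

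\emph{Part (b).} Each time the described process actually performs a mutation, it mutates a semibrick pair that is singly left mutation compatible and has nonempty positive part, so part (a) applies and the torsion class $\Filt\Fac(\S_p)$ strictly decreases at that step (the intermediate objects are again semibrick pairs by Proposition \ref{prop:mutation}(a), so the iteration is well-defined). Because $\tors\Lambda$ is finite, a strictly decreasing chain of torsion classes has finite length, so the process must halt after finitely many steps. By construction it halts precisely when the current semibrick pair $\Y$ either fails to be singly left mutation compatible or has empty positive part; in the latter case $\Y = \S[1]$ for some $\S \in \sbrick\Lambda$. This is exactly the asserted dichotomy.
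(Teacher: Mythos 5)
Your proof is correct and takes essentially the same route as the paper: the same case analysis on the two kinds of elements of $\S'_p$ gives $\Filt\Fac(\S'_p) \subseteq \Filt\Fac(\S_p)$, and (b) follows by iterating (a) against the finiteness of $\tors\Lambda$. The only difference is that you spell out the strictness step explicitly (using that $\mu^+_{S,\X}(\X)$ is a semibrick pair with $S[1]$ in its negative part, so the torsion class $\lperp{S}$ contains $\Filt\Fac(\S'_p)$ but not $S$), whereas the paper records $S \in \Filt\Fac(\S_p)\setminus\Filt\Fac(\S'_p)$ as a bare observation.
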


\begin{proof}
(a) Let $T \in \S'_p$. By Definition \ref{def:mutation}(b), there are two possibilities. If $T = \mu^+_{S,\X}(T')$ with $T' \in \S_p$, then there exists $S_{T'} \in \Filt S$ and an exact sequence $S_T\hookrightarrow T \twoheadrightarrow T'$. Otherwise, $T = \mu^+_{S,\X}(T'[1])$ with $T' \in \S_n$ and there exists $S_{T'} \in \Filt S$ and an exact sequence $T'\hookrightarrow S_{T'} \twoheadrightarrow T$ (note that in this case, the map $g_{S,T'}^+$ must be mono). In either case, we have $T \in \Filt\Fac(\S_p)$ and hence $\Filt\Fac(\S'_p) \subseteq \Filt\Fac(\S_p)$. Moreover, we observe that $S \in \Filt\Fac(\S_p) \setminus \Filt\Fac(\S'_p)$.

(b) If this process did not terminate, we would end up with an infinite descending chain of torsion classes by (a). This violates the assumption that $\Lambda$ is $\tau$-tilting finite.
\end{proof}

We now propose the following definition.

\begin{define}\label{def:mutcompat}
	Let $\X = \S_p \sqcup \S_n[1]$ be a semibrick pair. We call $\X$ \emph{mutation compatible} if any of the following hold:
	\begin{enumerate}[label=\upshape(\alph*)]
		\item $\S_p = \emptyset$, that is, $\X$ is a shifted semibrick.
		\item $\S_n = \emptyset$, that is, $\X$ is a semibrick.
		\item $\X$ is singly left mutation compatible and there exists $S \in \S_p$ such that $\mu^+_{S, \X}(\X)$ is a mutation compatible semibrick pair.
	\end{enumerate}
\end{define}

\begin{rem}\label{rem:reachzero}\
	\begin{enumerate}[label=\upshape(\alph*)]
		\item This notion is well-defined by the previous lemma.
		\item We could have chosen to define left mutation compatible and right mutation compatible separately. The reason we chose not to do so is that these conditions turn out to be equivalent as a corollary of the following theorem.
	\end{enumerate}
\end{rem}

We are now ready to prove our first main theorem (Theorem A in the introduction).

\begin{thm}\label{thm:mutationcomplete}
	Let $\X = \S_p \sqcup \S_n[1]$ be a semibrick pair. Then $\X$ is completable if and only if $\X$ is mutation compatible.
\end{thm}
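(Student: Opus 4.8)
The plan is to prove both directions by reducing to the case of a shifted semibrick (equivalently a semibrick), where completability is automatic by Theorem~\ref{thm:semibrickcompletable}, and by using that mutation of an honest 2-simple minded collection is an involutive operation that preserves 2-simple mindedness. The key technical input is Proposition~\ref{prop:mutation}, which tells us that $\mu^+_{S,\X}$ sends semibrick pairs to semibrick pairs and that $\mu^-$ undoes it, together with Lemma~\ref{lem:reachzero}, which guarantees that any sequence of left mutations of a singly left mutation compatible semibrick pair terminates (in a shifted semibrick or in a semibrick pair that fails to be singly left mutation compatible).

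For the ``if'' direction, suppose $\X$ is mutation compatible. Induct on the number of mutations needed to reach case (a) or (b) of Definition~\ref{def:mutcompat} (this number is finite and well-defined by Remark~\ref{rem:reachzero}(a) and Lemma~\ref{lem:reachzero}). In the base case $\S_p = \emptyset$ or $\S_n = \emptyset$, Theorem~\ref{thm:semibrickcompletable} gives that $\X$ is completable. For the inductive step, we have $S \in \S_p$ with $\mu^+_{S,\X}(\X)$ mutation compatible, hence completable by induction: there is a 2-simple minded collection $\X''$ with $\mu^+_{S,\X}(\X) \subseteq \X''$. The goal is to mutate $\X''$ back. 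Since $\X'' \in \smc\Lambda$ and $S[1] \in \X''$ (as $\mu^+_{S,\X}(S) = S[1] \in \mu^+_{S,\X}(\X) \subseteq \X''$), the collection $\X''$ is singly right mutation compatible at $S[1]$ by Remark~\ref{rem:mutationagrees}, and $\mu^-_{S,\X''}(\X'') \in \smc\Lambda$. Now I invoke the ``pairwise'' compatibility of mutation recorded in Remark~\ref{rem:mutationagrees}: because $\mu^+_{S,\X}(\X) \subseteq \X''$, for each $T \neq S$ in $\X$ the morphism $\mu^-$ applied to the corresponding element of $\X''$ agrees with $\mu^-_{S,\mu^+_{S,\X}(\X)}$ applied to $\mu^+_{S,\X}(T)$, which by Proposition~\ref{prop:mutation}(b) equals $T$ (and the element $S[1] \mapsto S$). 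Hence $\X = \mu^-_{S,\mu^+_{S,\X}(\X)}(\mu^+_{S,\X}(\X)) \subseteq \mu^-_{S,\X''}(\X'') \in \smc\Lambda$, so $\X$ is completable.

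For the ``only if'' direction, suppose $\X$ is completable, say $\X \subseteq \Z$ with $\Z \in \smc\Lambda$. If $\S_p = \emptyset$ we are in case (a) and done; otherwise pick $S \in \S_p$. Since $\Z$ is a 2-simple minded collection it is singly left mutation compatible at $S$ (Remark~\ref{rem:mutationagrees}), and then $\X$ itself is singly left mutation compatible at $S$ because for each $T \neq S$ in $\X \subseteq \Z$ the approximation $g^+_{S,|T|}$ computed inside $\Filt S$ is the same whether one views $T$ inside $\X$ or inside $\Z$ (the approximation depends only on $T$ and $S$, not on the ambient pair) — so the mono-or-epi condition, which holds for all of $\Z$, holds for $\X$. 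Moreover $\mu^+_{S,\X}(\X) = \mu^+_{S,\Z}(\X) \subseteq \mu^+_{S,\Z}(\Z) \in \smc\Lambda$ by Remark~\ref{rem:mutationagrees} and Proposition~\ref{prop:mutation} applied to $\Z$, so $\mu^+_{S,\X}(\X)$ is again completable. By Lemma~\ref{lem:reachzero}(a), $\Filt\Fac(\S'_p) \subsetneq \Filt\Fac(\S_p)$ strictly decreases, so iterating this procedure must terminate after finitely many steps; since at each stage the pair is completable (hence, if its positive part is nonempty, we can continue), termination can only occur at a pair with empty positive part, i.e.\ case (a). Unwinding, $\X$ satisfies Definition~\ref{def:mutcompat} by (possibly repeated) application of (c) down to (a), so $\X$ is mutation compatible.

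The main obstacle I anticipate is the bookkeeping in the ``if'' direction showing that mutating the large collection $\X''$ back along $S$ restricts correctly to $\X$ — i.e.\ carefully justifying that $\mu^-_{S,\X''}$ restricted to $\mu^+_{S,\X}(\X)$ coincides with $\mu^-_{S,\mu^+_{S,\X}(\X)}$. This is exactly the content of the ``pairwise'' remark (Remark~\ref{rem:mutationagrees}) and of Proposition~\ref{prop:mutation}(b), but one must check that the relevant approximations (hence cones/cocones) are genuinely computed the same way in the sub-pair and the ambient 2-simple minded collection; the fact that a left minimal $(\Filt S)$-approximation of a fixed module depends only on that module and on $\Filt S$ makes this go through, and the same observation powers the ``only if'' direction. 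A secondary point to be careful about is ensuring the induction in the ``only if'' direction terminates at case (a) rather than at a non-singly-left-mutation-compatible pair — but this cannot happen, since every pair in the chain is completable and we have just shown completable pairs with nonempty positive part are singly left mutation compatible at any chosen $S \in \S_p$.
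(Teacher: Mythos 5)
Your proposal is correct and follows essentially the same route as the paper: the ``if'' direction mutates down to a shifted semibrick, completes it by Theorem \ref{thm:semibrickcompletable}, and transports the completion back via right mutations using Proposition \ref{prop:mutation}(b) and the pairwise nature of mutation (Remark \ref{rem:mutationagrees}), while the ``only if'' direction mutates $\X$ inside an ambient 2-simple minded collection and terminates by the strict decrease of torsion classes from Lemma \ref{lem:reachzero}. The only cosmetic difference is that you phrase the ``if'' direction as an induction on the length of the witnessing mutation sequence rather than writing out the full chain of mutations as the paper does.
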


\begin{proof}
	Let $\X = \S_p \sqcup \S_n[1]$ be a mutation compatible semibrick pair. We already know that $\X$ is completable if $\S_p = \emptyset$ or $\S_n = \emptyset$ (Theorem \ref{thm:semibrickcompletable}). Otherwise, by Definition \ref{def:mutcompat}, there exists a sequence of left mutations
	$$\X = \X_0 \xrightarrow{\mu^+_{S_0, \X_0}} \X_1 \xrightarrow{\mu^+_{S_1,\X_1}} \X_2 \xrightarrow{\mu^+_{S_2,\X_2}}\cdots\xrightarrow{\mu^+_{S_{m-1},\X_{m-1}}} \X_m$$
	such that $\X_m$ is a shifted semibrick. It follows that $\X_m$ is completable (say to $\Y_m \in \smc\Lambda$). Thus by the pairwise nature of mutation (see Remark \ref{rem:mutationagrees}), there exists a sequence of right mutations of 2-simple minded collections
	$$\X \subset \Y_0 \xleftarrow{\mu^-_{S_0,\Y_1}}\Y_1\xleftarrow{\mu^-_{S_1,\Y_2}} \Y_2 \xleftarrow{\mu^-_{S_2,\Y_3}}\cdots \xleftarrow{\mu^-_{S_{m-1},\Y_m}} \Y_m$$
	and therefore $\X$ is completable.
	
	Now let $\X = \S_p \sqcup \S_n[1]$ be a completable semibrick pair, so $\X \subset \Y$ for some $\Y \in \smc\Lambda$. If $\S_p$ or $\S_n$ is empty, then $\X$ is mutation compatible, so assume both are nonempty. We now define a sequence of semibrick pairs and 2-simple minded collections as follows.
	\begin{itemize}
		\item $\S_{0,p} \sqcup \S_{0,n}[1] = \X_0 := \X$ and $\Y_0 := \Y$.
		\item If $\S_{i-1,p} \neq \emptyset$, then choose $S_{i-1} \in \S_{i-1,p}$ and define $\S_{i,p}\sqcup\S_{i,n}[1] = \X_i:= \mu^+_{S_{i-1},\Y_{i-1}}(\X_{i-1})$ and $\Y_i:= \mu^+_{S_{i-1},\Y_{i-1}}(\Y_{i-1})$.
	\end{itemize}
	It follows that each $\X_i \subset \Y_i$ is contained in a 2-simple minded collection, and hence is singly left mutation compatible. Moreover, there exists some $\X_m$ which is a shifted semibrick by Remark \ref{rem:reachzero}. Therefore  by the pairwise nature of mutation (see Remark \ref{rem:mutationagrees}), $\X$ is mutation compatible.
\end{proof}

\begin{cor}\label{cor:mutationcompatiblepassdown}
	Let $\X = \S_p \sqcup \S_n[1]$ be mutation compatible. Then for $S \in \S_p$, the semibrick pair $\mu^+_{S,\X}(\X)$ is mutation compatible.
\end{cor}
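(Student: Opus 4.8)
The plan is to deduce the corollary from Theorem~\ref{thm:mutationcomplete} together with the well-behavedness of mutation on $\smc\Lambda$. Since $\X$ is mutation compatible, Theorem~\ref{thm:mutationcomplete} produces a $2$-simple minded collection $\Y \in \smc\Lambda$ with $\X \subseteq \Y$. Being a $2$-simple minded collection, $\Y$ is singly left mutation compatible at every element of its positive part (Remark~\ref{rem:mutationagrees}), and in particular at $S$; hence the pairwise nature of mutation (Remark~\ref{rem:mutationagrees}) shows $\X$ is singly left mutation compatible at $S$ as well, so $\mu^+_{S,\X}(\X)$ is defined and $\mu^+_{S,\X}(T) = \mu^+_{S,\Y}(T)$ for all $S \neq T \in \X$. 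As $\mu^+_{S,\X}(S) = S[1] = \mu^+_{S,\Y}(S)$, this yields the inclusion $\mu^+_{S,\X}(\X) \subseteq \mu^+_{S,\Y}(\Y)$.

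Next I would invoke the standard fact (from the Koenig--Yang theory of simple-minded collections \cite{koenig_silting}, which by Remark~\ref{rem:mutationagrees} agrees with our notion of mutation on $\smc\Lambda$) that $\mu^+_{S,\Y}(\Y) \in \smc\Lambda$. Combined with the inclusion above, this exhibits $\mu^+_{S,\X}(\X)$ as a subset of a $2$-simple minded collection, i.e. $\mu^+_{S,\X}(\X)$ is completable. A second application of Theorem~\ref{thm:mutationcomplete} then gives that $\mu^+_{S,\X}(\X)$ is mutation compatible, as desired.

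There is no real obstacle in this argument: essentially all of the content has already been packaged into Theorem~\ref{thm:mutationcomplete}, and the corollary is just the observation that mutation compatibility behaves well under a single left-mutation step. The only point deserving care is checking that $\mu^+_{S,\X}(\X)$ is genuinely defined (that $\X$ is singly left mutation compatible at $S$) before appealing to the pairwise nature of mutation, which is precisely why the argument is routed through the ambient collection $\Y$. One could alternatively give a direct inductive proof straight from Definition~\ref{def:mutcompat}, inducting on the length of a witnessing sequence of left mutations for $\X$, but the route through Theorem~\ref{thm:mutationcomplete} is shorter and cleaner.
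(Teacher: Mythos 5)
Your argument is correct and is essentially the paper's own proof: the paper likewise uses Theorem \ref{thm:mutationcomplete} to embed $\X$ in a 2-simple minded collection $\Y$, observes via the pairwise nature of mutation that $\mu^+_{S,\X}(\X) \subseteq \mu^+_{S,\Y}(\Y) \in \smc\Lambda$, and concludes by Theorem \ref{thm:mutationcomplete} again. Your write-up merely makes explicit the intermediate checks (single left mutation compatibility at $S$ and the Koenig--Yang fact that mutation preserves $\smc\Lambda$) that the paper leaves implicit.
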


\begin{proof}
	If $\X = \S_p \sqcup \S_n[1]$ is contained in the 2-simple minded collection $\Y$, then $\mu^+_{S,\X}(\X)$ is contained in the 2-simple minded collection $\mu^+_{S,\Y}(\Y)$.
\end{proof}

\subsection{The Pairwise 2-Simple Minded Compatibility Property}\label{sec:pairwiseproperty}

An interesting problem is to determine when the 2-simple minded collections of an algebra are given by pairwise conditions. More precisely, we are interested in which algebras satisfy the following definition.

\begin{define}\label{def:pairwise}
	Let $\X = \S_p \sqcup \S_n[1]$ be a semibrick pair. We say that $\X$ has the \emph{pairwise 2-simple minded compatibility property} if either $\X$ is completable or there exists $S \in \S_p, T \in \S_n$ such that $S\sqcup T[1]$, considered as a semibrick pair, is not completable. We say the algebra $\Lambda$ has the pairwise 2-simple minded compatibility property if every semibrick pair for $\mods\Lambda$ does.
\end{define}

Note that the negation of this property for $\mods\Lambda$ is: There exists a semibrick pair which is not completable, but in which any pair of direct summands can be completed to a 2-simple minded collection.

In Section \ref{sec:gentleclassification}, we will show that not every $\tau$-tilting finite algebra has the pairwise 2-simple minded compatibility property, disproving Conjecture 1.11 from \cite{hanson_tau}. We end this subsection by using our previous results to rephrase Definition \ref{def:pairwise} in terms of mutation compatibility.

\begin{prop}\label{prop:pairwise}
	Let $\Lambda$ be a $\tau$-tilting finite algebra. Then $\Lambda$ has the pairwise 2-simple minded compatibility property if and only if for every singly left mutation compatible semibrick pair $\X = \S_p \sqcup \S_n[1]$, the following are equivalent.
	\begin{enumerate}[label=\upshape(\roman*)]
		\item For all $S \in \S_p, T \in \S_n$, the semibrick pair $S\sqcup T[1]$ is mutation compatible.
		\item $\X$ is mutation compatible.
	\end{enumerate}
\end{prop}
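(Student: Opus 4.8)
The strategy is to pass everything through Theorem~\ref{thm:mutationcomplete}, which identifies ``mutation compatible'' with ``completable'', and then to exploit the elementary fact that a left minimal $(\Filt S)$-approximation of a module $T$ is intrinsic to the pair $(S,T)$: it does not depend on the ambient semibrick pair, because $\Filt S$ is determined by $S$ alone. Two preliminary observations are worth recording first. If $\X=\S_p\sqcup\S_n[1]$ is completable, then for every $S\in\S_p$ and $T\in\S_n$ the set $S\sqcup T[1]$ is a semibrick pair (a subset of $\X$) lying in the same 2-simple minded collection, hence completable; by Theorem~\ref{thm:mutationcomplete} this is exactly the implication (ii)$\Rightarrow$(i), valid for any semibrick pair $\X$. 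Moreover a completable semibrick pair is automatically singly left mutation compatible: if $\X\subset\Y\in\smc\Lambda$ then $\Y$ is singly left mutation compatible by Remark~\ref{rem:mutationagrees}, and for $S\in\S_p$, $T\in\S_n$ the approximation $g^+_{S,T}$ is the same whether computed inside $\X$ or inside $\Y$ (this is the observation already used in the proof of Theorem~\ref{thm:mutationcomplete}).

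For the forward implication, suppose $\Lambda$ has the pairwise 2-simple minded compatibility property and let $\X=\S_p\sqcup\S_n[1]$ be singly left mutation compatible. The implication (ii)$\Rightarrow$(i) is the first observation above. For (i)$\Rightarrow$(ii): if every $S\sqcup T[1]$ with $S\in\S_p$, $T\in\S_n$ is mutation compatible, then by Theorem~\ref{thm:mutationcomplete} every such pair is completable, so the pairwise property applied to $\X$ forces $\X$ to be completable, hence mutation compatible again by Theorem~\ref{thm:mutationcomplete}. (Note that this direction does not actually use that $\X$ is singly left mutation compatible.)

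For the backward implication, suppose the equivalence (i)$\Leftrightarrow$(ii) holds for every singly left mutation compatible semibrick pair, and let $\Z=\S_p\sqcup\S_n[1]$ be an arbitrary semibrick pair; the goal is to exhibit either a completion of $\Z$ or a non-completable pair $S\sqcup T[1]$. If $\S_p=\emptyset$ or $\S_n=\emptyset$ then $\Z$ is completable by Theorem~\ref{thm:semibrickcompletable}. If $\Z$ is singly left mutation compatible, invoke the hypothesis: if every $S\sqcup T[1]$ is mutation compatible then (i) holds, so (ii) holds and $\Z$ is completable by Theorem~\ref{thm:mutationcomplete}; otherwise some $S\sqcup T[1]$ is not mutation compatible, hence not completable by Theorem~\ref{thm:mutationcomplete}, and we are done. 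Finally, if $\Z$ is not singly left mutation compatible, there exist $S\in\S_p$ and $T\in\S_n$ for which the left minimal $(\Filt S)$-approximation $g^+_{S,T}\colon T\to S_T$ is neither a monomorphism nor an epimorphism; but this is also the approximation occurring in the semibrick pair $S\sqcup T[1]$, so that pair fails to be singly left mutation compatible at $S$, hence is not mutation compatible by Definition~\ref{def:mutcompat}, hence is not completable by Theorem~\ref{thm:mutationcomplete} --- again producing the desired non-completable pair.

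The only step that requires genuine care is this last case of the backward direction: one must observe that a failure of singly left mutation compatibility of $\Z$ can always be witnessed by a genuine two-element sub-semibrick-pair, which rests precisely on the invariance of the approximation $g^+_{S,T}$ under passage to subsets (the same invariance underlies the second preliminary observation). Everything else is bookkeeping with Theorem~\ref{thm:mutationcomplete}.
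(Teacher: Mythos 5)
Your proof is correct and follows essentially the paper's approach: the paper's own proof simply states that the proposition follows directly from Theorem~\ref{thm:mutationcomplete}, and your argument supplies exactly that routine translation between ``completable'' and ``mutation compatible''. The one detail you rightly make explicit --- that a failure of singly left mutation compatibility of an arbitrary semibrick pair is witnessed by the two-element sub-semibrick-pair $S\sqcup T[1]$, since the left minimal $(\Filt S)$-approximation $g^+_{S,T}$ depends only on $S$ and $T$ --- is precisely what the paper leaves implicit.
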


\begin{proof}
	This follows directly from Theorem \ref{thm:mutationcomplete}.
\end{proof}

\begin{rem}
It may seem that the pairwise compatibility property should imply that every singly left mutation compatible semibrick pair is mutation compatible. However, a priori, the condition that $S \sqcup T[1]$ be mutation compatible is necessary. Indeed, assume that $S \sqcup T[1]$ is singly left mutation compatible and let $g^+_{S,T}: T \rightarrow S_T$ be a left minimal $(\Filt S)$-approximation. If $g^+_{S,T}$ is epi, then $\mu^+_{S, S\sqcup T[1]}(S\sqcup T[1]) = S[1] \sqcup \ker(g^+_{S,T})[1]$ and $S\sqcup T[1]$ is mutation compatible. However, if $g^+_{S,T}$ is mono, then $\mu^+_{S, S\sqcup T[1]}(S\sqcup T[1]) = \coker(g^+_{S,T})\sqcup S[1]$, but it does not immediately follow that the left minimal $(\Filt \coker(g^+_{S,T}))$-approximation $S \rightarrow \coker(g^+_{S,T})_S$ is mono or epi. We do not, however, have an example where this is not the case.
\end{rem}

We can further refine Proposition \ref{prop:pairwise} for algebras with particularly well-behaved approximations, as shown in the following.

\begin{prop}\label{prop:pairwise2}
	Let $\Lambda$ be a $\tau$-tilting finite algebra such that for all $S,T\in\brick\Lambda$, we have the following.
		\begin{enumerate} [label=\upshape(\alph*)]
			\item If there exists a morphism $T \rightarrow S$ which is mono or epi, then it is a left minimal $(\Filt S)$-approximation. In particular, $\dim\Hom(T,S) = 1$.
			\item If there does not exist a morphism $T\rightarrow S$ which is mono or epi, then there does not exist a nonzero left minimal $(\Filt S)$-approximation $T \rightarrow S_T$ which is mono or epi.
		\end{enumerate}
	Then the following are equivalent.
	\begin{enumerate}[label=\upshape(\roman*)]
		\item $\Lambda$ has the pairwise 2-simple minded compatibility property.
		\item Every singly left mutation compatible semibrick pair is mutation compatible.
		\item For every singly left mutation compatible semibrick pair $\X = \S_p \sqcup \S_n[1]$ and $S \in \S_p$, the semibrick pair $\mu^+_{S, \X}(\X)$ is singly left mutation compatible.
	\end{enumerate}
\end{prop}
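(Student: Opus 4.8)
The plan is to prove the chain of implications $(i) \Rightarrow (ii) \Rightarrow (iii) \Rightarrow (i)$, using Proposition~\ref{prop:pairwise} as the bridge between $(i)$ and the mutation-theoretic conditions. The hypotheses (a) and (b) on approximations are designed precisely to eliminate the pathological case flagged in the Remark following Proposition~\ref{prop:pairwise}: they guarantee that $S \sqcup T[1]$ is \emph{automatically} mutation compatible whenever it is singly left mutation compatible. So the first task is to prove this auxiliary claim: if $S \sqcup T[1]$ (with $S \in \brick\Lambda$, $T \in \brick\Lambda$) is singly left mutation compatible, then it is mutation compatible. By definition, singly left mutation compatibility means the left minimal $(\Filt S)$-approximation $g^+_{S,T} : T \to S_T$ is mono or epi. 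If it is epi, then $\mu^+_{S, S\sqcup T[1]}(S \sqcup T[1]) = S[1] \sqcup \ker(g^+_{S,T})[1]$ is a shifted semibrick, hence mutation compatible by Definition~\ref{def:mutcompat}(a), and we are done. If it is mono, then $\mu^+_{S, S\sqcup T[1]}(S\sqcup T[1]) = \coker(g^+_{S,T}) \sqcup S[1]$; here we must check this new semibrick pair is singly left mutation compatible, i.e. that the left minimal $(\Filt \coker(g^+_{S,T}))$-approximation $S \to (\coker g^+_{S,T})_S$ is mono or epi. By hypothesis (b), if no mono-or-epi morphism $S \to \coker(g^+_{S,T})$ exists, then no nonzero such approximation exists, so the approximation is $0$ (vacuously ``mono or epi'' in the degenerate sense — more carefully, $\mu^+$ of $S[1]$ is then just $S[1]$ again, and one iterates; since the positive part strictly shrinks its torsion class by Lemma~\ref{lem:reachzero}(a), this terminates); if such a morphism does exist, hypothesis (a) says it \emph{is} the approximation and it is mono or epi. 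Either way we reduce the dimension, and by Lemma~\ref{lem:reachzero}(b) the process terminates at a shifted semibrick, proving $S \sqcup T[1]$ is mutation compatible. (One should phrase this carefully using hypothesis (a), which also pins down $\dim\Hom(T,S)=1$, to ensure the approximation is literally the morphism in question.)

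With the auxiliary claim in hand, $(i) \Rightarrow (ii)$ is immediate: given a singly left mutation compatible $\X = \S_p \sqcup \S_n[1]$, for every $S \in \S_p, T \in \S_n$ the restricted pair $S \sqcup T[1]$ is singly left mutation compatible (restriction of the approximation, using Remark~\ref{rem:mutationagrees} or directly), hence mutation compatible by the claim; so condition (i) of Proposition~\ref{prop:pairwise} holds, and since $\Lambda$ has the pairwise property, condition (ii) of that proposition gives that $\X$ is mutation compatible. The implication $(ii) \Rightarrow (i)$ is the reverse direction through Proposition~\ref{prop:pairwise}: if every singly left mutation compatible semibrick pair is mutation compatible, then in particular (using the claim in the other role, or just directly) whenever all the $S \sqcup T[1]$ are mutation compatible and $\X$ is singly left mutation compatible, $\X$ is mutation compatible — and the converse direction of the equivalence in Proposition~\ref{prop:pairwise} is trivial (mutation compatible $\X$ restricts to completable, hence mutation compatible, pairs). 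So $(i) \Leftrightarrow (ii)$.

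For $(ii) \Leftrightarrow (iii)$: the forward direction $(ii)\Rightarrow(iii)$ uses Corollary~\ref{cor:mutationcompatiblepassdown} — if $\X$ is singly left mutation compatible then by (ii) it is mutation compatible, hence $\mu^+_{S,\X}(\X)$ is mutation compatible by the Corollary, and a mutation compatible semibrick pair is in particular singly left mutation compatible (it is completable by Theorem~\ref{thm:mutationcomplete}, and completable pairs are singly left mutation compatible as noted in the proof of that theorem). The substantive direction is $(iii) \Rightarrow (ii)$: assume every one-step left mutation of a singly left mutation compatible pair is again singly left mutation compatible. Given a singly left mutation compatible $\X$ with $\S_p \neq \emptyset$, pick $S \in \S_p$; by (iii), $\mu^+_{S,\X}(\X)$ is singly left mutation compatible, so we may mutate again, and inductively every pair in the resulting chain is singly left mutation compatible. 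By Lemma~\ref{lem:reachzero}(b) the chain terminates, and by (iii) it cannot terminate at a pair that fails to be singly left mutation compatible, so it terminates at a shifted semibrick. Unwinding Definition~\ref{def:mutcompat}(c) along this chain shows $\X$ is mutation compatible.

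I expect the main obstacle to be the mono case of the auxiliary claim — carefully justifying, via hypotheses (a) and (b), that iterating left mutation starting from $\coker(g^+_{S,T}) \sqcup S[1]$ keeps the pair singly left mutation compatible at each step until it becomes a shifted semibrick. The delicate point is handling the ``degenerate'' possibility that the relevant approximation $S \to (\coker g^+_{S,T})_S$ is zero: one must either argue $\mu^+$ leaves $S[1]$ unchanged and invoke finiteness (Lemma~\ref{lem:reachzero}(a) forces the positive-part torsion class to strictly decrease, so this cannot recur indefinitely), or set up the induction on $\dim_K|S_p|$ (the total dimension of the positive part) from the start. The rest of the argument is a routine bookkeeping exercise threading Proposition~\ref{prop:pairwise}, Corollary~\ref{cor:mutationcompatiblepassdown}, and Lemma~\ref{lem:reachzero} together.
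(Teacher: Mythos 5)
Your overall skeleton---prove the auxiliary claim that every singly left mutation compatible rank-2 pair $S \sqcup T[1]$ is mutation compatible, then thread Proposition \ref{prop:pairwise}, Corollary \ref{cor:mutationcompatiblepassdown} and Lemma \ref{lem:reachzero} together---is exactly the paper's, and your treatment of (i)$\Leftrightarrow$(ii) and (ii)$\Leftrightarrow$(iii) is correct. The problem is the mono case of the auxiliary claim, which you yourself flag as the main obstacle: the way you propose to close it does not work. First, hypothesis (b) does not say that the absence of a mono-or-epi morphism $S \to \coker(g^+_{S,T})$ forces the left minimal $(\Filt \coker(g^+_{S,T}))$-approximation to be zero; it only rules out a \emph{nonzero mono-or-epi} approximation. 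A nonzero approximation that is neither mono nor epi is not excluded by (b), and in that scenario $\coker(g^+_{S,T}) \sqcup S[1]$ would simply fail to be singly left mutation compatible and your iteration would be stuck. Second, appealing to Lemma \ref{lem:reachzero}(b) only gives termination of the mutation process; that lemma explicitly allows the process to end at a pair which is \emph{not} singly left mutation compatible, so ``terminates'' is not the same as ``terminates at a shifted semibrick,'' which is what Definition \ref{def:mutcompat} requires.

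The missing observation, which is what the paper uses, is that in the mono case a mono-or-epi morphism $S \to \coker(g^+_{S,T})$ \emph{always} exists: the canonical projection $S \twoheadrightarrow \coker(g^+_{S,T})$ from the exact sequence $T \hookrightarrow S \twoheadrightarrow \coker(g^+_{S,T})$ (and $\coker(g^+_{S,T}) \neq 0$, since $g^+_{S,T}$ being an isomorphism would contradict $\Hom(S,T)=0$). By hypothesis (a) this projection is itself a left minimal $(\Filt \coker(g^+_{S,T}))$-approximation, so $\coker(g^+_{S,T}) \sqcup S[1]$ is singly left mutation compatible, and mutating at $\coker(g^+_{S,T})$ replaces $S[1]$ by the shifted kernel of that epimorphism, yielding $T[1] \sqcup \coker(g^+_{S,T})[1]$, a shifted semibrick. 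Thus the rank-2 claim is settled after two explicit mutations; no termination argument and no ``degenerate zero approximation'' case is needed (that case cannot occur, since $\Hom(S,\coker(g^+_{S,T})) \neq 0$ forces the approximation to be nonzero). With this repair your argument goes through and coincides with the paper's proof.
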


\begin{proof}
	Let $\Lambda$ be such an algebra and let $\X = \S_p \sqcup \S_n[1]$ be a singly left mutation compatible semibrick pair. Let $S \in \S_p$ and $T \in \S_n$. We claim that $S \sqcup T[1]$ is completable. Indeed, if $\Hom(T,S) = 0$, then $\mu^+_{S, S\sqcup T[1]}(S\sqcup T[1]) = S[1] \sqcup T[1]$ and hence $S \sqcup T[1]$ is mutation compatible. If $\Hom(T,S) \neq 0$, then by (a) and (b), we have that $\dim\Hom(T,S) = 1$ and there is a morphism $g^+_{S,T}: T \rightarrow S$ which is mono or epi and is a left minimal $(\Filt S)$-approximation. If $g^+_{S,T}$ is epi, then $\mu^+_{S, S\sqcup T[1]}(S\sqcup T[1]) = S[1] \sqcup \ker(g^+_{S,T})[1]$ and hence $S \sqcup T[1]$ is mutation compatible. If $g^+_{S,T}$ is mono, then $\mu^{+}_{S, S\sqcup T[1]}(S\sqcup T[1]) = \coker(g^+_{S,T}) \sqcup S[1]$ and the quotient map $S \twoheadrightarrow \coker(g_{S,T})$ is a left minimal $(\Filt \coker(g^+_{S,T}))$-approximation by (a). This means $\coker(g^+_{S,T})\sqcup S[1]$ is singly left mutation compatible and $\mu^+_{\coker(g^+_{S,T}),\coker(g^+_{S,T})\sqcup S[1]}(\coker(g^+_{S,T})\sqcup S[1]) = T[1] \sqcup \coker(g^+_{S,T})[1]$; hence, $S\sqcup T[1]$ is mutation compatible. This shows that every singly left mutation compatible semibrick pair of rank 2 is mutation compatible. Given this, the equivalence between (i) and (ii) is immediate from Proposition \ref{prop:pairwise}.
	
	To see that (iii) implies (ii), let $\X = \S_p \sqcup \S_n[1]$ be singly left mutation compatible. If $\S_p = \emptyset$, we are done. Otherwise, choose $S \in \S_p$ and let $\X' = \mu_{S,\X}^+(\X)$. By (iii), $\X'$ is singly left mutation compatible, so we can repeat this process. Since this process must terminate by Lemma \ref{lem:reachzero}(b), we conclude that $\X$ is mutation compatible.
	
	Finally, it follows from Corollary \ref{cor:mutationcompatiblepassdown} that (ii) implies (iii). Therefore, the three conditions are equivalent as claimed.
\end{proof}

The hypotheses of Proposition \ref{prop:pairwise2} include the simpler case where $\dim\Hom(T,S) \leq 1$ for all $S,T\in\brick\Lambda$ and bricks do not have nontrivial self-extensions. We will, however, need the weaker hypotheses when we discuss Nakayama-like algebras in Section \ref{sec:cyclics}.

We conclude this section with a new proof that every representation finite hereditary algebra has the pairwise 2-simple minded compatibility condition. This result has already been shown by the second author and Todorov in \cite{igusa_signed} using the correspondence between bricks and $c$-vectors. We give here a proof that does not rely on this machinery.

\begin{prop}\label{prop:hereditary}
	Let $\Lambda$ be hereditary. Then every semibrick pair is singly left mutation compatible and $\Lambda$ has the 2-simple minded compatibility property.
\end{prop}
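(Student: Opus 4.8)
The plan is to establish the two assertions in order: first that every semibrick pair of $\Lambda$ is singly left mutation compatible, and then to deduce the pairwise $2$-simple minded compatibility property from this. Since $\Lambda$ is $\tau$-tilting finite and hereditary, it is representation finite, so every brick is exceptional; in particular $\Ext^1(S,S)=0$ for every brick $S$, whence $\Filt S=\add S$, and this subcategory is equivalent to $\mods D$ for the division ring $D=\End(S)$. Consequently, for any module $T$ a left minimal $(\Filt S)$-approximation has the form $g\colon T\to S^n$ with $n=\dim_D\Hom(T,S)$ (taking $g\colon T\to 0$ when $n=0$).

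For the first assertion, fix a semibrick pair $\X=\S_p\sqcup\S_n[1]$ together with $S\in\S_p$ and $T\in\S_n$; then $S$ and $T$ are both modules and the semibrick pair conditions reduce to $\Hom(S,T)=0=\Ext^1(S,T)$. Let $g\colon T\to S^n$ be a left minimal $(\add S)$-approximation. By Remark \ref{rem:mono epi motivation} it suffices to show $g$ is a monomorphism or an epimorphism, and since $\Lambda$ is hereditary we have $\cone(g)\cong\ker(g)[1]\oplus\coker(g)$ in $\Db(\mods\Lambda)$; thus it is enough to prove that $\ker g$ and $\coker g$ cannot both be nonzero. Factor $g=j\circ p$ with $p\colon T\twoheadrightarrow U:=\image g$ and $j\colon U\hookrightarrow S^n$. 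Left minimality of $g$ forces the maps $\Hom(S^n,S)\to\Hom(U,S)\to\Hom(T,S)$ induced by $j$ and $p$ to be isomorphisms (the composite is surjective because $g$ is a left approximation and is a map between $D$-spaces of equal dimension $n$, while $-\circ p$ is injective). Applying $\Hom(-,S)$ to $0\to U\to S^n\to\coker g\to 0$ and to $0\to\ker g\to T\to U\to 0$, and using $\Ext^{\ge 2}=0$ and $\Ext^1(S,S)=0$, one obtains $\Hom(\coker g,S)=0=\Ext^1(\coker g,S)$ and $\Hom(\ker g,S)=0$; dually, applying $\Hom(S,-)$ and using $\Hom(S,T)=0=\Ext^1(S,T)$ gives $\Hom(S,\ker g)=0$ and $\Ext^1(S,\coker g)=0$.

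Suppose, for contradiction, that $\ker g\ne 0$ and $\coker g\ne 0$; ruling this out is the step I expect to be the main obstacle. Here $\coker g$ is a nonzero factor of $S^n$ having no morphisms to $S$ and no extensions by $S$ from that side, while $\ker g$ is a nonzero submodule of the brick $T$ that is Hom-orthogonal to $S$. Splicing the short exact sequences $0\to\ker g\to T\to U\to 0$ and $0\to U\to S^n\to\coker g\to 0$ along $U$, and combining this with the defining triangle $T\to S^n\to\ker(g)[1]\oplus\coker(g)\to T[1]$ and the rigidity $\Ext^1(T,T)=0$, one should be able to produce a nonzero element of $\Ext^1(S,T)$ (equivalently, to rule out such a $\coker g$ by analyzing which submodules of $S^n$ can occur as $\image g$ with $T/\image g$ indecomposable), contradicting the semibrick pair hypothesis. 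Hence $g$ is a monomorphism or an epimorphism, so $\X$ is singly left mutation compatible at every element of $\S_p$.

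Finally, the pairwise compatibility property follows formally. Let $\X=\S_p\sqcup\S_n[1]$ be any semibrick pair. If $\S_p\ne\emptyset$, choose $S\in\S_p$; by the first assertion $\X$ is singly left mutation compatible at $S$, so $\mu^+_{S,\X}(\X)$ is again a semibrick pair by Proposition \ref{prop:mutation}, with $\Filt\Fac$ of its positive part strictly smaller by Lemma \ref{lem:reachzero}(a). Iterating and invoking Lemma \ref{lem:reachzero}(b) --- which never gets stuck, since every semibrick pair encountered is singly left mutation compatible --- we reach a shifted semibrick, which is completable by Theorem \ref{thm:semibrickcompletable}. Pulling this completion back through the mutations via the pairwise nature of mutation (Remark \ref{rem:mutationagrees}) shows that $\X$ is mutation compatible, hence completable by Theorem \ref{thm:mutationcomplete}. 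Thus every semibrick pair of $\Lambda$ is completable, so $\Lambda$ trivially has the pairwise $2$-simple minded compatibility property.
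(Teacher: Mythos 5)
Your reduction is set up correctly (it suffices to show that $\ker g$ and $\coker g$ are not both nonzero, since the hereditary hypothesis gives $\cone(g)\cong\ker(g)[1]\oplus\coker(g)$), and your final paragraph deducing the pairwise property by iterating mutation via Proposition \ref{prop:mutation}, Lemma \ref{lem:reachzero} and Theorem \ref{thm:mutationcomplete} is correct and is essentially the paper's own reduction. However, the crucial step is not proved: you explicitly defer it (``one should be able to produce a nonzero element of $\Ext(S,T)$''), and that step is exactly the mathematical content of the proposition, so as written the argument has a genuine gap. The paper closes it with one observation you never invoke: it follows from the proof of Proposition \ref{prop:mutation} (Claim 3 applied with $T=T'$) that $\cone(g^+_{S,T})$ is a brick, since its endomorphism ring is isomorphic to the division ring $\End(T)$; in particular it is indecomposable, so in the decomposition $\cone(g^+_{S,T})\cong\coker(g^+_{S,T})\oplus\ker(g^+_{S,T})[1]$ one summand must vanish, i.e.\ $g^+_{S,T}$ is mono or epi. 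This needs none of the dimension counts or the identification $\Filt S=\add S$ that you set up.

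Moreover, your intended substitute is unlikely to be completable as stated, because all the vanishing conditions you derive constrain $\ker g$ and $\coker g$ \emph{separately}, and conditions of that type cannot suffice. For example, over $\Lambda=KA_2$ with $S=P_1$ the projective cover of the first simple, the simple module $Q=S_1$ is a nonzero quotient of $S$ satisfying $\Hom(Q,S)=0$, $\Ext(Q,S)=0$ and $\Ext(S,Q)=0$; so a nonzero cokernel is perfectly compatible with the full list of orthogonality facts you establish, and nothing of that shape can force $\coker g=0$ (similarly for $\ker g$). The contradiction has to exploit the \emph{joint} structure of $\ker(g)[1]\oplus\coker(g)$ --- a direct sum of two nonzero objects cannot have endomorphism ring a division ring --- which is precisely what the brick property of the cone encodes, and it is not clear how splicing the two short exact sequences would produce a nonzero class in $\Ext(S,T)$. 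The fix is simply to replace that paragraph by the citation of Claim 3 from the proof of Proposition \ref{prop:mutation} (or to reprove that $\End(\cone(g^+_{S,T}))\cong\End(T)$ using Lemma \ref{lem:approxconditions}), after which your argument coincides with the paper's.
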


\begin{proof}
	Let $\S_p \sqcup \S_n[1]$ be a semibrick pair. Let $S \in \S_p, T \in \S_n$, and let $g^+_{S,T}: T\rightarrow S_T$ be a left minimal  $(\Filt S)$-approximation. It follows from the proof of Proposition \ref{prop:mutation} that $\cone(g^+_{S,T})$ is a brick. Moreover, it is well known that since $\Lambda$ is hereditary, $\cone(g^+_{S,T}) \cong \coker(g_{S,T}^+)\sqcup \ker(g^+_{S,T})[1]$. Therefore, $g^+_{S,T}$ is either mono or epi. We conclude that $\S_p \sqcup \S_n[1]$ is singly left mutation compatible. As in the proof of Proposition \ref{prop:pairwise2} above, this implies that $\Lambda$ has the pairwise 2-simple minded compatibility property.
\end{proof}

\section{Pairwise 2-Simple Minded Compatibility for Nakayama-like Algebras}\label{sec:cyclics}

The goal of this section is to show that all \emph{Nakayma-like algebras} have the pairwise 2-simple minded compatibility property.

\begin{define}\label{def:nakayamalike}
	Consider the quivers
	\begin{center}
		\begin{tikzpicture}
			\node at (0,0) {1};
			\node at (1,0) {2};
			\node at (3.5,0) {$n$};
			\draw (.2,0)--(.8,0) node[midway,anchor=north]{$\gamma_1$};
			\draw (1.2,0)--(1.8,0) node[midway,anchor=north]{$\gamma_2$};
			\node at (2.25,0) {$\cdots$};
			\draw (2.7,0)--(3.3,0) node[midway,anchor=north]{$\gamma_{n-1}$};
			\draw (3.5,.2)--(3.5,.5)--(0,.5)--(0,.2);
			\draw (0,.5)--(3.5,.5) node[midway,anchor=south]{$\gamma_n$};
			\node at (1.75,-1){$\Delta_n$};
		\begin{scope}[shift = {(5,0)}]
			\node at (0,0) {1};
			\node at (1,0) {2};
			\node at (3.5,0) {$n$};
			\draw (.2,0)--(.8,0) node[midway,anchor=north]{$\gamma_1$};
			\draw (1.2,0)--(1.8,0) node[midway,anchor=north]{$\gamma_2$};
			\node at (2.25,0) {$\cdots$};
			\draw (2.7,0)--(3.3,0) node[midway,anchor=north]{$\gamma_{n-1}$};
			\node at (1.75,-1){$A_n$};
		\end{scope}
		\end{tikzpicture}
	\end{center}
	where each $\gamma_i$ has arbitrary orientation. By a \emph{Nakayama-like algebra}, we mean an algebra of the form $K\Delta_n/I$ or $KA_n/I$ where $I$ is an admissible ideal generated by monomials and $\Lambda \ncong K\widetilde{A}_n$ for some orientation of $\widetilde{A}_n$ (the extended Dynkin diagram of type $A_n$).
\end{define}

\begin{rem}\label{rem:othercyclic}
	In the case that our algebra is of the form $KA_n/I$, we remark that the condition that $I$ be generated by monomials is superfluous. This is not the case for algebras of the form $K\Delta_n/I$, where the resulting algebra is either Nakayama-like, extended Dynkin, or of the form $KQ_{i,j}/I$, where $Q_{i,j}$ is the quiver
	\begin{center}
	\begin{tikzcd}[row sep = tiny]
		& n \arrow[r,"\beta_2"] & \cdots \arrow[r,"\beta_{j-1}"] & i+2 \arrow[dr,"\beta_j"]\\
		1 \arrow[ur,"\beta_1"] \arrow[dr,"\alpha_1"]&&&& i+1\\
		& 2 \arrow[r,"\alpha_2"] & \cdots \arrow[r,"\alpha_{i-1}"] & i \arrow[ur,"\alpha_i"]
	\end{tikzcd}
	\end{center}
and $I = (\alpha_1\cdots\alpha_i \pm \beta_1\cdots\beta_j)$, where the two signs are equivalent.
\end{rem}

The name Nakayama-like comes from the fact that such an algebra is Nakayama if and only if each arrow is oriented the same direction (say $i\rightarrow i+1$). We remark that we can consider Nakayama-like algebras with quiver $A_n$ to also be quotients of $K\Delta_n$ by relaxing the condition that $I$ be admissible.

The goal of this section is to prove the following theorem (Theorem B in the introduction).

\begin{thm}\label{thm:cyclichaveproperty}
	Let $\Lambda$ be a Nakayama-like algebra. Then $\Lambda$ is $\tau$-tilting finite and has the pairwise 2-simple minded compatibility property.
\end{thm}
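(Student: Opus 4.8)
The plan is to verify the two parts of the statement separately, but both should reduce to checking the hypotheses of Proposition \ref{prop:pairwise2}. For $\tau$-tilting finiteness, first I would dispose of the two cases: when $\Lambda = KA_n/I$, the underlying graph is a tree (type $A_n$), so $\Lambda$ is a representation-finite special biserial (in fact string) algebra, hence $\tau$-tilting finite by Theorem \ref{thm:finitegentle}; when $\Lambda = K\Delta_n/I$ with $I$ generated by monomials and $\Lambda \ncong K\widetilde{A}_n$, the ideal $I$ is nonzero, so the unique (unoriented) cycle of $\Delta_n$ contains a relation, and again $\Lambda$ is gentle (the monomial condition plus each vertex having at most two in- and out-arrows forces the string/gentle axioms after excluding loops and 2-cycles) and representation finite, so Theorem \ref{thm:finitegentle} applies. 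I would spell out explicitly that excluding $K\widetilde{A}_n$ is exactly what forces a relation on the cycle.

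For the pairwise 2-simple minded compatibility property, the strategy is to show that Nakayama-like algebras satisfy hypotheses (a) and (b) of Proposition \ref{prop:pairwise2}, and then to establish condition (iii) of that proposition, namely that the class of singly left mutation compatible semibrick pairs is closed under left mutation. The bricks of a Nakayama-like algebra are precisely the ``interval'' (string) modules $M_{[i,j]}$ supported on a contiguous arc of the quiver with no relation in the interior; I would first catalogue $\Hom(T,S)$ for two such bricks $S,T$ and check that any nonzero map between bricks is either injective or surjective (the image of a map between string modules here is again a string module supported on an arc, which must coincide with a quotient of $T$ and a submodule of $S$), giving $\dim\Hom(T,S) \leq 1$ in the relevant cases and verifying (a). For (b), I would argue that if no mono or epi $T \to S$ exists then $\Filt S$ is small enough (it is the wide subcategory generated by the arc of $S$) that no approximation $T \to S_T$ can be mono or epi either — essentially because $S_T \in \Filt S$ decomposes combinatorially and any mono/epi from $T$ into such a module would restrict to a mono/epi into one of the string summands.

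The main obstacle, and the heart of the argument, is condition (iii): showing that $\mu^+_{S,\X}(\X)$ is again singly left mutation compatible. Concretely, as the remark after Proposition \ref{prop:pairwise2} flags, the only delicate case is when $g^+_{S,T} : T \to S_T$ is mono, producing $\mu^+_{S,\X}(T[1]) = \coker(g^+_{S,T})$, and one must then check that the left minimal $(\Filt \coker(g^+_{S,T}))$-approximations of the other members of $\mu^+_{S,\X}(\X)$ are again mono or epi. I expect this to be handled by a careful but finite combinatorial analysis of how arcs (string intervals) in the quiver $\Delta_n$ or $A_n$ interact: mutation at $S$ replaces each arc by ``gluing on'' or ``cutting off'' the arc of $S$, and one shows that the resulting intervals still have the property that any extension or morphism among them is controlled by interval arithmetic on the cycle, so the mono-or-epi dichotomy is preserved. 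The cyclic case ($\Delta_n$) is where this is subtlest, since arcs can wrap around, and the hypothesis $\Lambda \ncong K\widetilde A_n$ (equivalently, the presence of a relation) is what prevents pathological arcs that would break the dichotomy; once (iii) is in hand, Proposition \ref{prop:pairwise2} immediately yields the pairwise 2-simple minded compatibility property.
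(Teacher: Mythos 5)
Your overall strategy is the same as the paper's: reduce to Proposition \ref{prop:pairwise2} by checking its hypotheses (a) and (b) for bricks of a Nakayama-like algebra, and then establish condition (iii), that singly left mutation compatible semibrick pairs remain singly left mutation compatible after a left mutation. But there are genuine gaps. The claim you use to verify (a) --- that \emph{any} nonzero map between bricks is injective or surjective --- is false: already for the linearly oriented $KA_3$ (a Nakayama-like algebra), the composite $\begin{smallmatrix}2\\3\end{smallmatrix} \twoheadrightarrow 2 \hookrightarrow \begin{smallmatrix}1\\2\end{smallmatrix}$ is a nonzero map between bricks that is neither mono nor epi. This is exactly why the paper needs Lemma \ref{lem:notmonoepi}; hypothesis (a) only requires that a mono or epi between bricks, \emph{when one exists}, is a left minimal $(\Filt S)$-approximation spanning $\Hom$, which the paper gets from the classification of monos and epis in the arc model (Lemmas \ref{lem:monos}, \ref{lem:epis}, Corollary \ref{cor:nakayamahomdims}), together with a separate treatment of bricks with self-extensions (Corollary \ref{cor:nakayamaSelfExt}) --- a case your sketch ignores. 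Your justification of (b) (``any mono/epi from $T$ into a module of $\Filt S$ restricts to one of the summands'') is likewise not an argument; a monomorphism into a direct sum need not restrict to a monomorphism into a summand, and $\Filt S$ need not consist of semisimple-type objects when $S$ has self-extensions.

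More importantly, you correctly identify condition (iii) as the heart of the matter, but you do not prove it: you only state the expectation that ``a careful but finite combinatorial analysis'' of arcs will preserve the mono-or-epi dichotomy. In the paper this is Theorem \ref{thm:cyclicpairwise}, whose proof occupies several pages: one assumes a map $Y' \rightarrow X'$ in the mutated pair that is neither mono nor epi, pins down the arc configuration forced by Lemma \ref{lem:notmonoepi} and the presence of a relation, and then rules out, case by case (Claims 1 and 2, each with several subcases distinguishing whether $X'$ or $Y'$ arises from $\S_p$, $\S_n$, or is unchanged), every way such a configuration could arise from a singly left mutation compatible pair, each time producing a forbidden morphism or extension via Lemmas \ref{lem:indecomposableExts}--\ref{lem:notmonoepi} and Proposition \ref{prop:extdim1}. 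Without that analysis your proposal is a plan rather than a proof. Finally, a smaller point: Nakayama-like algebras need not be gentle (the monomial relations may have length greater than two), so Theorem \ref{thm:finitegentle} does not apply to them; the conclusion you want is unharmed, since representation finiteness implies $\tau$-tilting finiteness for any algebra, which is how the paper disposes of that part in one line.
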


Nakayama algebras themselves have been shown to have the pairwise 2-simple minded compatibility property in \cite{hanson_tau} and in \cite{igusa_category}, \cite{igusa_signed} when $\Lambda \cong KA_n$. Algebras of the form $K \widetilde{A}_n$ are known to be $\tau$-tilting infinite, which is why they are excluded from the definition of Nakayama-like algebras and the statement of Theorem \ref{thm:cyclichaveproperty}. The fact that Nakayama-like algebras are $\tau$-tilting finite is immediate, since they are representation-finite algebras.


For the remainder of this section, we fix a Nakayama-like algebra $\Lambda$. We begin by giving a description of the indecomposable $\Lambda$-modules and bricks, and the morphisms and extensions between them. This description is similar to the description for Nakayama algebras in \cite[Section 3.1]{hanson_tau} or \cite{adachi_classification}, and differs slightly from the standard description in terms of strings.

Recall that given a quiver with relations $Q/I$, there exists another quiver with relations $\overline{Q}/\overline{I}$ called the \emph{universal cover} of $Q/I$. Readers are referred to \cite{martinez_universal} for details.

We begin by writing $\Lambda = KQ/I$ where $Q \in \{A_n,\Delta_n\}$. If $Q = A_n$, then $A_n/I$ is its own universal cover. Otherwise, the universal cover of $Q/I$ is of the form $A_\infty/J$, where $A_\infty$ is a quiver whose underlying graph has vertex set $\Z$ and contains an edge between $i$ and $j$ if and only if $j = i+1$.

We now fix some notation that we will use to construct our model. We denote by $i_n$ the unique integer $1\leq k \leq n$ such that $i-k\in n\Z$. We then define the ordered multisets
$$[i,j]_n := \begin{cases}
	[i_n,j_n] \textnormal{ if }i_n < j_n\\
	[i_n,n]\cup[1,j_n] \textnormal{ if } i_n \geq j_n
\end{cases}$$
and likewise for $(i,j)_n$, etc. For example, $(1,4)_5 = \{2,3\}$ and $(4,1)_5 = \{5\}$. Similarly, we define 
$$[i,i,j]_n := [i,i]_n \cup (i,j]_n.$$ For example, $[4,4,1]_5 = \{4, 5, 1, 2, 3, 4, 5, 1\}$. Lastly, given three marked points $i,j,k$, we say $i < _n j <_n k$ if $j \in (i,k)_n$. Equivalently, either $i, j$, and $k$ are distinct and in cyclic order or $i = k \neq j$.

Alternatively, if we fix a section of the covering map $A_\infty \rightarrow \Delta_n$ we can label each $x \in (A_\infty)_0$ with $i_j$ where $1 \leq i \leq n$ and $j \in \Z$ (that is, $x$ corresponds to the $j$-th copy of the vertex $i$). Thus considering $\Z$ with this labeling, we can define, for example
$$[i,j]_n := \begin{cases}
	[i_0,j_0] \textnormal{ if }i < j\\
	[i_0,n_0]\cup[1_1,j_1] \textnormal{ if } i \geq j
\end{cases}$$
for all $1 \leq i,j \leq n$. We can likewise define intervals of the form $[i,i,j]_n$ in this way.

We are now ready to describe the indecomposable $\Lambda$-modules. For $M \in \mods\Lambda$, we denote the length of $M$ by $l(M)$. The following is immediate from the classification of (isomorphism classes of) indecomposable modules as (equivalence classes) of strings.
\begin{prop}
	For every $i \leq n$, there is an integer $l(i)$ such that for $j \leq l(i)$ there exists a unique string, either equal to $e_i$ or starting with $e_i\gamma_i^{\epsilon_i}$, such that the corresponding module has length $j$. Moreover, all (equivalence classes) of strings appear in this way. Thus there is a bijection
	$$M:\{(i,j)\in\Z^2|0 < i \leq n,0 < j \leq l(i)\}\leftrightarrow\ind(\mods\Lambda)$$
\end{prop}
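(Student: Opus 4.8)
The plan is to deduce the statement directly from the classification of indecomposable modules of a string algebra by strings (up to taking formal inverses), recalled in Section~\ref{sec:background} following \cite{butler_auslander}, after reorganizing the set of strings according to the leftmost vertex appearing in each of them. First I would check that $\Lambda = KQ/I$ with $Q\in\{A_n,\Delta_n\}$ and $I$ generated by monomials is a string algebra: every vertex of $A_n$ or $\Delta_n$ is incident to at most two edges, so conditions (ii) in the definition of a string algebra hold trivially, (i) is the hypothesis on $I$, and (iii) holds because after traversing the edge $\gamma_i$ there is at most one further arrow of $Q$ that can be composed with it, $Q$ being a path or a cycle. Hence $\ind(\mods\Lambda)$ is in bijection with equivalence classes of strings under inversion.

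Next I would pass to the universal cover $\overline{Q}/\overline{I}$ (see \cite{martinez_universal}): this is $A_n/I$ itself when $Q=A_n$, and it is $A_\infty/J$ with $(A_\infty)_0 = \Z$ when $Q=\Delta_n$. In either case the underlying graph of the cover is a path, and a walk on a path graph with no immediate backtracking (condition (a) of Definition~\ref{def:strings}) is necessarily monotone. Therefore, after fixing a section of the covering map and using the resulting labeling of $(A_\infty)_0$ by $\Z$, every string lifts to a subinterval $[a,b]$ with $a \le b$ that contains no lift of an element of $\overline{I}$, and conversely every such interval is the lift of a string. Since the inverse of a string traverses the same interval backwards, and since two lifts of the same $\Lambda$-string differ by the shift-by-$n$ deck transformation, the equivalence classes of strings of $\Lambda$ correspond bijectively to the orbits of valid intervals under shift by $n$, each orbit having a unique representative whose left endpoint $i_0$ lies in the chosen section.

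The remaining step is the bookkeeping that yields the parametrization. For $1 \le i \le n$ let $i_0$ be the chosen lift of $i$, and let $\gamma_i^{\epsilon_i}$ be the unique orientation of the edge $\gamma_i$ making $e_i\gamma_i^{\epsilon_i}$ a walk; set $l(i)$ to be the number of vertices of the longest valid interval with left endpoint $i_0$, equivalently the maximal length of a string equal to $e_i$ or beginning with $e_i\gamma_i^{\epsilon_i}$. Because every subword of a string is again a string, the valid intervals with left endpoint $i_0$ are exactly $[i_0,i_0], [i_0,i_0+1], \dots, [i_0, i_0 + l(i) - 1]$, precisely one of each length $j = 1, \dots, l(i)$, and the length of such an interval equals the $K$-dimension (equivalently, the number of vertices appearing in the string) of the corresponding module. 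Finiteness of $l(i)$ is automatic when $Q = A_n$ (indeed $l(i) \le n$), and when $Q = \Delta_n$ it follows from $\Lambda$ being representation-finite, hence having only finitely many strings. Sending a string to the pair $(i,j)$ consisting of its leftmost vertex modulo $n$ and its length then gives the asserted bijection $M$.

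I expect the only genuine subtlety to lie in the universal cover bookkeeping in the $\Delta_n$ case: one must verify carefully that "leftmost vertex modulo $n$ together with length" is simultaneously a complete and a non-redundant invariant of a string up to inversion — this requires handling single-vertex (palindromic) strings and strings that wrap around the cycle more than once — and that the relations cut the valid intervals down to an initial segment in each of the $n$ admissible directions, which is exactly what makes $l(i)$ finite there. Everything else is a routine unwinding of the correspondence between strings and string modules.
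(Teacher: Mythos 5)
Your proposal is correct and follows the same route as the paper, which simply declares the statement immediate from the Butler--Ringel classification of indecomposables by strings; your universal-cover/interval bookkeeping (monotone walks on a line, prefixes of strings are strings, deck-transformation identification, relations bounding the intervals) is just a careful unpacking of that same observation, with ``length'' correctly read as the $K$-dimension (number of vertices of the string) rather than Loewy length, which is what makes the uniqueness claim true.
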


We say a multiset of the form $[i,j]_n$ (resp. $[i,i,j]_n$) \emph{contains a relation} if $(j-i)_n > l(i)$ (resp. $n + (j-i)_n > l(i)$).

We are now ready to construct our combinatorial model. Let $D(n,l(1),l(2),\ldots,l(n))$ be the punctured disk $D^2\setminus\{0\}$ with $n$ marked points on its boundary, labeled counterclockwise $1,2,\ldots,n$. We label the marked point $i$ with a solid circle if $\gamma_{(i-1)_n}$ is oriented from $(i-1)_n$ to $i$ and with a solid square if $\gamma_{(i-1)_n}$ is oriented from $i$ to $(i-1)_n$. If it is unknown or irrelevant which way $\gamma_{(i-1)_n}$ is oriented, we label the marked point $i$ with a hollow circle\footnote{We also label $1$ with a hallow circle in the case that $Q = A_n$ and the arrow $\gamma_n$ does not exist.}. The value of $l(i)$ is as given in the above proposition and is called the \emph{length} of the marked point labeled $i$.

We now wish to relate $\brick\Lambda$ to certain directed paths in $D(n,l(1),\ldots,l(n))$. We first observe the following.

\begin{lem}\label{lem:bricks}
	Let $M(i,j) \in \ind(\mods\Lambda)$. Then $M(i,j)$ is a brick if and only if $j \leq n$. Moreover, in this case, $\End(M(i,j)) \cong K$.
	\begin{proof}
		It is clear that $M(i,j)$ is a brick if $j \leq n$, thus suppose $j > n$. Denote $j':=(i + j)_n$. If $i$ and $j'$ are both circles or both squares, then as in \cite[Prop. 3.4]{hanson_tau}, we have a chain or morphisms $$M(i, j) \twoheadrightarrow M(i, (j'-i)_n) \hookrightarrow M(i,j)$$
		and hence $M(i,j)$ is not a brick. Thus assume without loss of generality that $i$ is a circle and $j'$ is a square. It follows that either $(i,i)_n$ or $(j',j')_n$ must contain a relation, contradicting that $j \leq l(i)$.
		
		Now let $M(i,j)$ be a brick. We wish to show that $\End(M(i,j)) \cong K$. We observe that since $j \leq n$, the linear transformation corresponding to every arrow in the (nonempty) set $\{\gamma_{(i+j)_n}\ldots,\gamma_{(i-1)_n}\}$ is the zero map. Thus we can consider $M(i,j)$ as a representation of the subquiver containing only the arrows $\{\gamma_i\ldots\gamma_{(i+j-1)_n}\}$, which is of type $A_n$. The result then follows immediately.
	\end{proof}
\end{lem}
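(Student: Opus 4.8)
The plan is to treat the two directions of the ``brick iff $j \le n$'' equivalence separately, exploiting the thin-module structure of $M(i,j)$ when $j\le n$ and a ``rotation'' argument on the defining string when $j>n$; the second half of the statement ($\End(M(i,j))\cong K$) will fall out of the first direction for free.

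For the direction $j\le n \Rightarrow M(i,j)$ is a brick with $\End(M(i,j))\cong K$: first I would note that the string defining $M(i,j)$ has vertex sequence $i,(i+1)_n,\dots,(i+j-1)_n$, which consists of $j$ pairwise distinct vertices precisely because $j\le n$. Hence $M(i,j)$ is \emph{thin}, i.e.\ $\dim_K M(i,j)_x\le 1$ for every vertex $x$, and the supported vertices are connected through the arrows of the string, each of which acts as an isomorphism $K\to K$ while all other arrows act as $0$. Given $\phi\in\End(M(i,j))$, each component $\phi_x$ is multiplication by a scalar $\lambda_x\in K$ (as $M(i,j)_x$ is at most one-dimensional); commutativity of $\phi$ with each string arrow forces $\lambda_x=\lambda_y$ whenever $x,y$ are joined by a string arrow, and connectedness of the string forces all the $\lambda_x$ to coincide. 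So $\phi$ is a scalar multiple of the identity, which gives both $\End(M(i,j))\cong K$ and that $M(i,j)$ is a brick. (This is a clean way to organize the ``restrict to the type-$A$ subquiver on the support'' argument sketched in the excerpt; the thin-module observation makes the restriction itself unnecessary, though one could equally phrase it that way.)

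For the direction $j>n \Rightarrow M(i,j)$ is not a brick: now the vertex $i$ occurs at least twice among the composition factors. I would split into cases according to the boundary labels of the two marked points $i$ and $j':=(i+j)_n$ in $D(n,l(1),\dots,l(n))$. If $i$ and $j'$ carry the same label (both circles or both squares), then, exactly as in \cite[Prop.~3.4]{hanson_tau}, cutting the string at a repeated occurrence of $i$ yields a surjection $M(i,j)\twoheadrightarrow M(i,(j'-i)_n)$ together with an injection $M(i,(j'-i)_n)\hookrightarrow M(i,j)$ whose composite is a nonzero endomorphism that is neither mono nor epi; hence $\End(M(i,j))\ne K$ and $M(i,j)$ is not a brick. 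If $i$ and $j'$ carry opposite labels, I would show the case does not occur: tracing the orientations of $\gamma_1,\dots,\gamma_n$ around $\Delta_n$, a circle at $i$ together with a square at $j'$ forces one of the arcs $(i,i)_n$ or $(j',j')_n$ to contain a relation, contradicting $j\le l(i)$ (equivalently, contradicting that the defining string of $M(i,j)$ avoids $I$).

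I expect the opposite-label case to be the main obstacle: pinning down precisely \emph{which} arc must contain a relation requires carefully matching the alternation of arrow orientations around $\Delta_n$ against the positions of the length-two (hence, for these monomial ideals, the defining) relations of $I$, and then phrasing this in terms of the interval notation and the ``contains a relation'' condition set up just before the lemma. The same-label case is more routine, but one should still verify that the quotient-then-submodule composite is genuinely nonzero; this is where the explicit basis of the string module and the combinatorial description of morphisms between string modules (cf.\ \cite{crawley_maps,schroer_modules}) come in.
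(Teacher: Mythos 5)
Your proposal is correct and follows essentially the same route as the paper: the $j\le n$ direction is the paper's restriction-to-a-type-$A_n$-subquiver argument phrased via thinness of the string module, and for $j>n$ you make the same case split, citing \cite[Prop.~3.4]{hanson_tau} for the same-label case and ruling out the opposite-label case by finding a relation in $(i,i)_n$ or $(j',j')_n$. The step you flag as the main obstacle is precisely the assertion the paper itself makes at that point (``It follows that either $(i,i)_n$ or $(j',j')_n$ must contain a relation''), so there is no divergence in method, only in how much detail is spelled out.
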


In light of Lemma \ref{lem:bricks}, we propose the following definition.

\begin{define}
	A directed path $a: i\rightarrow j$ between two marked points of $D(n,l(1),\ldots,l(n))$ is called an \emph{arc} if
\begin{enumerate}[label=\upshape(\alph*)]
	\item $a$ is homotopic to the counterclockwise boundary arc $i\rightarrow j$.
	\item $a$ does not intersect itself unless $i=j$, in which case the only intersection occurs at the endpoint.
	\item $l(i)\geq (j-i)_n$.
\end{enumerate}
We call $i$ the \emph{source} of $a$, denoted $s(a)$, and $j$ the \emph{target} of $a$, denoted $t(a)$. We call $(j-i)_n$ the \emph{length} of $a$, denoted $l(a)$, and $[s(a),t(a)]_n$ the \emph{support} of $a$, denoted $\supp(a)$. Condition (3) can then be rephrased as $l(s(a)) \geq l(a)$. We denote the set of homotopy classes of arcs of $D(n,l(1),\ldots l(n))$ by $\Arc(n,l(1),\ldots,l(n))$.
\end{define}

The following is automatic from the definition and Lemma \ref{lem:bricks}.

\begin{prop}
There is a bijection $M: \Arc(n,l(1),\ldots,l(n)) \rightarrow \brick\Lambda$ given by sending an arc $a$ to the module $M(s(a),l(a))$.
\end{prop}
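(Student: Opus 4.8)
The plan is to factor the claimed bijection through the parametrization of $\ind(\mods\Lambda)$ already established. First I would record that, by the preceding proposition together with Lemma \ref{lem:bricks}, the map $M$ restricts to a bijection
\[
\{(i,j)\in\Z^2 : 0<i\le n,\ 0<j\le\min(l(i),n)\}\ \longrightarrow\ \brick\Lambda,
\]
since $M(i,j)$ is defined exactly when $0<j\le l(i)$ and is a brick exactly when $j\le n$.

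Next I would show that $a\mapsto(s(a),l(a))$ is a bijection from $\Arc(n,l(1),\ldots,l(n))$ onto this same index set. It is well-defined: $s(a)\in\{1,\ldots,n\}$ by definition, $l(a)=(t(a)-s(a))_n\in\{1,\ldots,n\}$ always, and condition (c) in the definition of an arc is precisely $l(a)\le l(s(a))$, so $l(a)\le\min(l(s(a)),n)$. For surjectivity, given $(i,\ell)$ with $1\le\ell\le\min(l(i),n)$, set $j:=(i+\ell)_n$ and take the directed path obtained by pushing the counterclockwise boundary arc from $i$ to $j$ slightly into the interior, on the side of the puncture prescribed by condition (a); this path is simple when $\ell<n$ and a simple loop at $i=j$ when $\ell=n$, so it satisfies (a), (b), (c) and has source $i$ and length $\ell$. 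For injectivity, if $s(a)=s(a')$ and $l(a)=l(a')$ then also $t(a)=t(a')$, and both $a,a'$ are non-self-intersecting directed paths (or loops) between the same two marked points lying in the single homotopy class singled out by condition (a); in $D^2\setminus\{0\}$ such a path is unique up to homotopy rel endpoints, so $a=a'$ in $\Arc(n,l(1),\ldots,l(n))$. Composing the inverse of the first bijection with the second shows that $a\mapsto M(s(a),l(a))$ is a bijection $\Arc(n,l(1),\ldots,l(n))\to\brick\Lambda$.

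The only nonroutine ingredient is the surface-topology fact used above: homotopy classes of non-self-intersecting directed paths in the punctured disk between two boundary marked points, constrained to the class of the counterclockwise boundary arc, are in bijection with ordered pairs of marked points, with the length automatically at most $n$. I expect this to be the main point needing care, although it is standard; everything else is bookkeeping between the two parametrizations. I would also check the degenerate cases $i=j$ (a loop around the puncture, permitted by (b)) and $Q=A_n$ with $\gamma_n$ absent, where the relevant constraint is absorbed entirely into the values $l(i)$ through condition (c), so that no arc wrapping around the puncture satisfies (c) unless the corresponding module exists.
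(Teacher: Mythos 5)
Your proof is correct and follows the route the paper itself intends: the paper declares the statement ``automatic from the definition and Lemma \ref{lem:bricks},'' and your argument is exactly that observation spelled out, composing the bijection of the preceding proposition (restricted via Lemma \ref{lem:bricks} to the pairs $(i,j)$ with $j\leq\min(l(i),n)$) with the correspondence $a\mapsto(s(a),l(a))$, whose bijectivity is forced by conditions (a)--(c) in the definition of an arc.
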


\begin{ex}
	Consider the algebra $\Lambda = KQ/I$ where $Q$ is the quiver
	\begin{center}
		\begin{tikzcd}
			1 \arrow[r, swap, "\beta"] \arrow[rr, bend left, "\gamma"] & 2 \arrow[r, swap, "\alpha"] & 3 \\
		\end{tikzcd}
	\end{center}
	and $I = (\beta\alpha)$. Then $\brick\Lambda$ corresponds to $\Arc(3, 2, 4, 3)$, as partially shown below, where we have labeled each arc with the corresponding brick written in the form $M(-,-)$ and as a string.
	
\begin{center}
\begin{tikzpicture}[scale=0.75]
	\draw (0,0) circle[radius=2cm];
	\draw (-.1,-.1) to (.1,.1);
	\draw (-.1,.1) to (.1,-.1);
		\draw[very thick,blue] plot [smooth] coordinates{(1.41,-1.41)(0,2)};
		\node at (1.41,-1.41) [draw, fill,circle,scale=0.6,label=east:3]{};
		\node at (-1.41,-1.41) [draw,fill,circle,scale=0.6, label=west:2]{};
		\node at (0,2) [draw,fill,rectangle,scale=0.7,label=north:1]{};
		\node at (0,-2.5) {$M(3, 1) = e_3 = S_3$};
\begin{scope}[shift={(6,0)}]
	\draw (0,0) circle[radius=2cm];
	\draw (-.1,-.1) to (.1,.1);
	\draw (-.1,.1) to (.1,-.1);
		\draw[very thick,blue] plot [smooth] coordinates{(1.41,-1.41)(0,1)(-1.41,-1.41)};
		\node at (1.41,-1.41) [draw, fill,circle,scale=0.6,label=east:3]{};
		\node at (-1.41,-1.41) [draw,fill,circle,scale=0.6, label=west:2]{};
		\node at (0,2) [draw,fill,rectangle,scale=0.7,label=north:1]{};
		\node at (0,-2.5) {$M(3, 2) = \gamma^{-1} = {\scriptsize\begin{matrix}1\\3\end{matrix}}$};
\end{scope}
\begin{scope}[shift={(12,0)}]
	\draw (0,0) circle[radius=2cm];
	\draw (-.1,-.1) to (.1,.1);
	\draw (-.1,.1) to (.1,-.1);
		\draw[very thick,blue] plot [smooth] coordinates{(1.41,-1.41)(-.2,-.5)(-1,1)(.5,.2)(1.41,-1.41)};
		\node at (1.41,-1.41) [draw, fill,circle,scale=0.6,label=east:3]{};
		\node at (-1.41,-1.41) [draw,fill,circle,scale=0.6, label=west:2]{};
		\node at (0,2) [draw,fill,rectangle,scale=0.7,label=north:1]{};
		\node at (0,-2.5) {$M(3, 3) = \gamma^{-1}\beta = {\scriptsize \begin{matrix}1\\32\end{matrix}}$};
\end{scope}
\end{tikzpicture}
\end{center}
\end{ex}

We now wish to give an overview of the morphisms and extensions between bricks in terms of how the corresponding arcs intersect. We remark that the pictures following the statements of the lemmas give examples of each type of intersection only. In each picture, we draw $S$ in solid blue and $T$ in dashed orange.

\begin{lem}\label{lem:indecomposableExts}
	Let $S,T \in \brick\Lambda$. Then there is an exact sequence $T\hookrightarrow E \twoheadrightarrow S$ with $E$ indecomposable if and only if one of the following holds.
	\begin{enumerate}[label=\upshape(\roman*)]
		\item $t(S) = s(T)$ is a circle and $[s(S),t(T)]_n$ (resp. $[s(S),s(S),t(T)]_n$ if $l(S) + l(T) > n$) does not contain a relation. In this case, we have $E \cong M(s(S),(t(T)-s(S))_n)$ if $l(S) + l(T) \leq n$ and $E \cong M(s(S),n + (t(T)-s(S))_n)$ if $l(S) + l(T) > n$.
		\item $s(S) = t(T)$ is a square and $[s(T),t(S)]_n$ (resp. $[s(T),s(T),t(S)]_n$ if $l(S) + l(T) > n$) does not contain a relation. In this case, we have $E \cong M(s(T),(t(S)-s(T))_n)$ if $l(S) + l(T) \leq n$ and $E \cong M(s(T),(n + (t(S)-s(T))_n))$ if $l(S) + l(T) > n$.
	\end{enumerate}
\end{lem}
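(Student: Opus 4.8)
The plan is to use the combinatorial model for indecomposable $\Lambda$-modules as strings (equivalently, as points $M(i,j)$ or as arcs in the punctured disk) and to analyze all indecomposable extensions $T \hookrightarrow E \twoheadrightarrow S$ directly via string combinatorics. Recall that for a string algebra, a nonsplit extension with indecomposable middle term arises precisely when the strings for $S$ and $T$ can be glued at a shared endpoint by an arrow: either we append an arrow $\gamma$ to the end of the string for $S$ and then continue with (the inverse of) the string for $T$, or symmetrically glue on the other side. Since our modules are of the special form described in the Proposition (each string either $e_i$ or starting $e_i\gamma_i^{\epsilon_i}$ and "going around" $\Delta_n$), the gluing data is completely controlled by the marked points $s(S), t(S), s(T), t(T)$ together with their circle/square labels.

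First I would set up the correspondence carefully: a string $w_S$ for $S = M(s(S), l(S))$ "starts" at $s(S)$ and "ends" at $t(S) = (s(S) + l(S))_n$, and the local configuration of arrows at $t(S)$ determines whether one can extend by an arrow at that vertex. The key point is that an indecomposable gluing of $w_T$ onto $w_S$ happens at the vertex $t(S) = s(T)$ exactly when that vertex is a circle (so that $\gamma_{(t(S))_n}$ points away from $t(S)$, allowing both $w_S$ to end there going "up into" the vertex and $w_T$ to start there going "up into" it in a compatible way to form a peak); dually, gluing at $s(S) = t(T)$ happens when that shared vertex is a square. This gives cases (i) and (ii). Then I would verify that the glued string is a legitimate string — i.e. the resulting sequence contains no relation — which is precisely the hypothesis that $[s(S), t(T)]_n$ (resp. the longer interval $[s(S),s(S),t(T)]_n$ when the total length exceeds $n$, i.e. when the string wraps around more than once) contains no relation. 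The two sub-cases $l(S)+l(T) \leq n$ versus $> n$ just record whether the glued module wraps around $\Delta_n$ once more, which is why the formula for $E$ picks up the extra $n$.

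For the "only if" direction I would argue that any indecomposable $E$ fitting in such a short exact sequence must, by the classification of submodules and quotients of string modules (the standard "arrow/inverse-arrow" description of graph maps from \cite{crawley_maps, schroer_modules}), decompose $E$'s string as $w_E = w_T \cdot \delta \cdot w_S$ for a single connecting arrow or inverse-arrow $\delta$ — and since our modules live on $\Delta_n$ with the prescribed orientation of $\gamma_i$'s, the only possibilities for where $\delta$ attaches are at a circle (case (i)) or a square (case (ii)). One then reads off that the relevant interval cannot contain a relation, since otherwise $w_E$ would not be a valid string, contradicting $E \in \ind(\mods\Lambda)$; and the length bookkeeping gives the stated isomorphism type of $E$.

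I expect the main obstacle to be the careful case analysis at the gluing vertex, keeping straight the circle/square conventions and the direction in which each string "enters" the shared marked point, together with correctly handling the wrap-around (the $l(S) + l(T) > n$ regime) so that the interval containing a relation is stated with the right multiset $[s(S),s(S),t(T)]_n$ rather than $[s(S),t(T)]_n$. A secondary subtlety is ruling out the degenerate situations (e.g. $s(S) = t(S)$, or $S$ or $T$ a simple module $e_i$, or $s(S) = s(T)$) to make sure no spurious indecomposable extension is missed or double-counted; these should all be dispatched by noting that the constant string $e_i$ still "enters" the vertex $i$ from the appropriate side, so the circle/square criterion continues to apply verbatim.
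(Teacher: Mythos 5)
Your proposal is correct and its core is the same as the paper's: both arguments start from the observation that an indecomposable middle term forces the strings of $S$ and $T$ to be glued by a single connecting arrow at a shared endpoint (yielding cases (i) and (ii), with the circle/square label deciding at which end $T$ sits as the submodule), and both then reduce existence of $E$ to the no-relation condition on the interval, with the $l(S)+l(T)>n$ regime handled by the wrapped multiset. Where you genuinely differ is in how the verification is finished: the paper restricts $S$, $T$, and $E$ to representations of a type $A$ subquiver (passing to the universal cover when the support wraps around) and quotes the type $A$ case, whereas you stay inside string combinatorics, checking that the glued word is a valid string and invoking the graph-map/arrow-extension description from \cite{crawley_maps,schroer_modules,canakci_extensions} for the ``only if'' direction. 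Both routes work; the paper's avoids the string-algebra Ext machinery, while yours avoids the universal cover. One small correction before writing it up: with the paper's convention the circle/square label of a marked point $i$ records the orientation of $\gamma_{(i-1)_n}$ (circle means it points \emph{into} $i$), so in case (i) the connecting letter is $\gamma_{(t(S)-1)_n}$, pointing from the last vertex in the support of $S$ into $s(T)$; your parenthetical about $\gamma_{(t(S))_n}$ pointing away from $t(S)$ names the wrong arrow, though your stated conclusion (circle at $t(S)=s(T)$ for case (i), square at $s(S)=t(T)$ for case (ii)) is the right one.
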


\begin{center}
\begin{tikzpicture}[scale=0.75]
	\draw (0,0) circle[radius=2cm];
	\draw (-.1,-.1) to (.1,.1);
	\draw (-.1,.1) to (.1,-.1);
		\draw[very thick,blue] plot [smooth] coordinates{(1.41,-1.41)(0,2)};
		\draw[very thick,orange,dashed] plot [smooth] coordinates{(-1.41,-1.41)(0,2)};
		\node at (1.41,-1.41) [draw, fill=white,circle,scale=0.6,label=east:$s(S)$]{};
		\node at (-1.41,-1.41) [draw,fill=white,circle,scale=0.6, label=west:$t(T)$]{};
		\node at (0,2) [draw,fill,circle,scale=0.6]{};
	\node at (0,-2.5) {Case (i)};
\begin{scope}[shift={(8,0)}]
	\draw (0,0) circle[radius=2cm];
	\draw (-.1,-.1) to (.1,.1);
	\draw (-.1,.1) to (.1,-.1);
		\draw[very thick,orange,dashed] plot [smooth] coordinates{(-1.41,-1.41)(1,0)(0,2)};
		\draw[very thick,blue] plot [smooth] coordinates{(1.41,-1.41)(-1,0)(0,2)};
		\node at (1.41,-1.41) [draw, fill=white,circle,scale=0.6,label=east:$t(S)$]{};
		\node at (-1.41,-1.41) [draw,fill=white,circle,scale=0.6, label=west:$s(T)$]{};
		\node at (0,2) [draw,fill,rectangle,scale=0.7]{};
		\node at (0,-2.5) {Case (ii)};
\end{scope}
\end{tikzpicture}
\end{center}

\begin{proof}
	In order for $E$ to be indecomposable, there must be an arrow from one of the ends of $T$ to one of the ends of $S$ (considering $S$ and $T$ as strings). This captures the two possibilities of this happening. We now consider three cases.
	
	If $l(S) + l(T) < n$, then we cannot have both $t(S) = s(T)$ and $s(S) = t(T)$. Thus assume without loss of generality that $t(S) = s(T)$. It is clear that any $E$ with an exact sequence $T \hookrightarrow E \twoheadrightarrow S$ must have $\supp(E) = [s(S),t(T)]_n$ (where we are identifying $E$ with its arc). This shows that there cannot be an indecomposable extension unless $[s(S),t(T)]_n$ does not contain a relation. Thus assume $[s(S),t(T)]_n$ does not contain a relation. It follows that, considered as representations, $S, T$, and any extension between them are supported only on the arrows $\gamma_{s(S)},\ldots,\gamma_{(t(T)-1)_n}$. Thus we can consider $S, T$, and $E$ as representations of the subquiver containing only the arrows $\gamma_{s(S)},\ldots,\gamma_{(t(T)-1)_n}$, which by assumption is of type $A_n$. The result is then clear in this case.
	
	Now suppose that $l(S) + l(T) = n$, so we have both $t(S) = s(T)$ and $s(S) = t(T)$. If the quiver of $\Lambda$ is $A_n$, then the result follows as above. Thus assume the quiver of $\Lambda$ is $\Delta_n$. We now observe that neither $S$ nor $T$ is supported on the arrows $\gamma_{(s(S)-1)_n}$ and $\gamma_{(s(T)-1)_n}$. Moreover, any $E$ with an exact sequence $T \hookrightarrow E \twoheadrightarrow S$ can only be supported on at most one of these arrows. The result then follows from an argument analogous to the first case.
	
	Finally, suppose that $l(S) + l(T) > n$, so without loss of generality we have $t(S) = s(T)$ and $s(S) \neq t(T)$. Again, considered as an arc, any $E$ with an exact sequence $T \hookrightarrow E \twoheadrightarrow S$ clearly has $\supp(E) = [s(S),s(S),t(T)]_n$. Thus, assume $[s(S),s(S),t(T)]_n$ does not contain a relation. It follows that in the universal cover, we can consider $S$, $T$, and $E$ to be representations of the subquiver containing only the arrows $\gamma_{s(S)_0},\ldots,\gamma_{s(S)_1},\ldots,\gamma_{((t(T)-1)_n)_1}$, which by assumption is of type $A_n$. The result is then immediate.
\end{proof}

\begin{lem}\label{lem:monos}
	Let $S,T \in \brick\Lambda$. Then there is a monomorphism $S \hookrightarrow T$ if and only if one of the following holds.
	\begin{enumerate}[label=\upshape(\roman*)]
		\item $s(S) = s(T) <_n t(S) <_n t(T)$ and $t(S)$ is a square.
		\item $s(T) <_n s(S) <_n t(S) = t(T)$ and $s(S)$ is a circle.
		\item $s(T) <_n s(S) <_n t(S) <_n t(T)$, $s(S)$ is a circle, and $t(S)$ is a square.
	\end{enumerate}
	Moreover, in the third case, there is an exact sequence $$S \hookrightarrow M(s(T),(t(S)-s(T))_n) \oplus M(s(S),(t(T)-s(S))_n) \twoheadrightarrow T.$$ In the other cases, every exact sequence $T \hookrightarrow E \twoheadrightarrow S$ is split or has $E$ indecomposable.
\end{lem}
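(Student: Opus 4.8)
The plan is to reduce everything to a linearly oriented quiver of type $A_n$, where the indecomposables are interval modules and monomorphisms between them are completely elementary, and then to translate the resulting interval-and-orientation data back into the circle/square language of the punctured disk $D(n,l(1),\dots,l(n))$. As a first step I would record that, since $S$ and $T$ are bricks, Lemma~\ref{lem:bricks} gives $l(S),l(T)\le n$, so the strings of $S$ and $T$ traverse each arrow of $Q$ at most once and both modules are thin (every composition factor occurs with multiplicity at most $1$). Hence a monomorphism $S\hookrightarrow T$ forces $\dim S_v\le\dim T_v$ at every vertex $v$, and, the supports being cyclic intervals of lengths $l(S),l(T)\le n$, this means $\supp(S)\subseteq\supp(T)$ with the smaller interval nested inside the larger one; since $\supp(S)=\supp(T)$ would force $S=T$ (via the bijection $M:\Arc\to\brick\Lambda$), this is already the ``positional'' content of (i), (ii), (iii). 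As in the proofs of Lemmas~\ref{lem:bricks} and~\ref{lem:indecomposableExts}, I would then delete an arrow $\gamma_k$ that does not occur in the string of $T$---if $Q=\Delta_n$ and $T$ is a full loop, the unique such ``seam'' arrow, and otherwise any arrow outside $\supp(T)$; it acts as zero on both $S$ and $T$, so both become modules over the type-$A_n$ quiver $Q\setminus\{\gamma_k\}$ (nothing needs deleting when $Q=A_n$).

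Over a linear $A_n$ quiver it is classical---and also recovered from the string-module morphism combinatorics of \cite{crawley_maps,schroer_modules}---that a nonzero map of interval modules $S\to T$ is injective precisely when $S$ occurs as a string submodule of $T$, i.e. when the interval of $S$ sits inside that of $T$ and the local orientations at the ends of $S$ make it closed under the radical action inside $T$. Next I would unwind the labelling rule of the disk model---a marked point $x$ is a circle iff $\gamma_{(x-1)_n}$ points into $x$ and a square iff it points out---to see that this closure condition is imposed only at an end of $S$ lying strictly between $s(T)$ and $t(T)$, and that there it reads exactly: $s(S)$ is a circle when $s(S)\ne s(T)$, and $t(S)$ is a square when $t(S)\ne t(T)$. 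Together with the support containment from the first step this gives the forward implication, and for the converse each of (i), (ii), (iii) visibly exhibits $S$ as a string submodule of $T$.

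For the ``moreover'' clauses I would argue as follows. In case (iii), the source-end inclusion (an instance of (ii), using that $s(S)$ is a circle) gives a monomorphism $S\hookrightarrow M(s(T),(t(S)-s(T))_n)$ and the target-end inclusion (an instance of (i), using that $t(S)$ is a square) gives a monomorphism $S\hookrightarrow M(s(S),(t(T)-s(S))_n)$; their antidiagonal into the direct sum is still monic, and---over the $A_n$ subquiver---its cokernel is the interval module obtained by gluing the two summands along their common subinterval $S$, whose union is the interval of $T$, so the cokernel is $T$ and we obtain the displayed exact sequence. In cases (i) and (ii), where $S$ and $T$ share one endpoint, the image of $S$ is a string submodule of $T$ whose removal breaks the string of $T$ at a single point, so any extension $T\hookrightarrow E\twoheadrightarrow S$ can attach at most one arrow at the free end; by Lemma~\ref{lem:indecomposableExts} this forces $E$ to be indecomposable unless the sequence splits, the remaining step being a short case check modeled on the three-case argument in the proof of Lemma~\ref{lem:indecomposableExts}.

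I expect the main obstacle to be the second step: carefully matching the closure condition for a string submodule at an end of $S$ with the circle/square labels, which hides an off-by-one between the target marked point $t(S)$ of an arc and the last vertex of the corresponding module, together with the degenerate cases for $Q=\Delta_n$ in which $S$ or $T$ is a full loop and its support is all of $Q_0$. A secondary subtlety is the extension claim in cases (i) and (ii), which requires controlling \emph{all} extensions of $S$ by $T$, not just the indecomposable middle terms described by Lemma~\ref{lem:indecomposableExts}.
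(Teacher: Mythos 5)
Your proposal follows essentially the same route as the paper: restrict $S$, $T$, and the relevant extensions to the type $A_n$ subquiver given by the arrows in the support of $T$, where monomorphisms between interval (string) modules and their extensions are elementary, and then translate the submodule-closure conditions at the interior endpoints of $S$ into the circle/square labels. The paper's proof is exactly this reduction stated in one sentence, so your more detailed plan is consistent with it; just note that the degenerate case you flag (where $T$ has full support, so the seam arrow may occur in the string of $S$ or of a middle term $E$) is precisely where the claim ``the deleted arrow acts as zero on everything involved'' needs the separate check you anticipate.
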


\begin{center}
\begin{tikzpicture}[scale=0.75]
	\draw (0,0) circle[radius=2cm];
	\draw (-.1,-.1) to (.1,.1);
	\draw (-.1,.1) to (.1,-.1);
		\draw[very thick,blue] plot [smooth] coordinates{(2,0)(0,1.5)(-2,0)};
		\draw[very thick,orange,dashed] plot [smooth] coordinates{(2,0)(-0.5,0.5)(-1.41,-1.41)};
		\node at (2,0) [draw, fill=white,circle,scale=0.6]{};
		\node at (-1.41,-1.41) [draw,fill=white,circle,scale=0.6, label=west:$t(T)$]{};
		\node at (-2,0) [draw,fill,rectangle,scale=0.7,label=west:$t(S)$]{};
	\node at (0,-2.5) {Case (i)};
\begin{scope}[shift={(7,0)}]
	\draw (0,0) circle[radius=2cm];
	\draw (-.1,-.1) to (.1,.1);
	\draw (-.1,.1) to (.1,-.1);
		\draw[very thick,blue] plot [smooth] coordinates{(2,0)(0,1.5)(-2,0)};
		\draw[very thick,orange,dashed] plot [smooth] coordinates{(-2,0)(0,-0.5)(1,0)(0,0.5)(-2,0)};
		\node at (-2,0) [draw, fill=white,circle,scale=0.6, label=west:$s(T)$]{};
		\node at (2,0) [draw, fill,circle,scale=0.6, label=east:$s(S)$]{};
		\node at (0,-2.5) {Case (ii)};
\end{scope}
\begin{scope}[shift={(14,0)}]
	\draw (0,0) circle[radius=2cm];
	\draw (-.1,-.1) to (.1,.1);
	\draw (-.1,.1) to (.1,-.1);
		\draw[very thick,blue] plot [smooth] coordinates{(-1.41,1.41)(1.41,1.41)};
		\draw[very thick,orange,dashed] plot [smooth] coordinates{(-1.41,-1.41)(0,0.5)(1.41,-1.41)};
		\node at (1.41,-1.41) [draw, fill=white,circle,scale=0.6, label=east:$s(T)$]{};
		\node at (-1.41,-1.41) [draw,fill=white,circle,scale=0.6, label=west:$t(T)$]{};
		\node at (1.41,1.41) [draw,fill,circle,scale=0.6,label=east:$s(S)$]{};
		\node at (-1.41,1.41) [draw,fill,rectangle,scale=0.7,label=west:$t(S)$]{};
		\node at (0,-2.5) {Case (iii)};
\end{scope}
\end{tikzpicture}
\end{center}

\begin{proof}
	In all three cases, we observe that $S, T$, and any extensions can be considered as representations of the subquiver consisting only of the arrows $\{\gamma_{s(T)},\ldots,\gamma_{((t(T)-1)_n)}\}$, which by assumption is of type $A_n$. The result is then immediate.
\end{proof}

\begin{lem}\label{lem:epis}
	Let $S\ncong T \in \brick\Lambda$. Then there is an epimorphism $S \twoheadrightarrow T$ if and only if one of the following holds.
	\begin{enumerate}[label=\upshape(\roman*)]
		\item $s(S) = s(T) <_n t(S) <_n t(T)$ and $t(T)$ is a circle.
		\item $s(T) <_n s(S) <_n t(S) = t(T)$ and $s(T)$ is a square.
		\item $s(T) <_n s(S) <_n t(S) <_n t(T)$, $s(T)$ is a square, and $t(T)$ is a circle.
	\end{enumerate}
	Moreover, in the third case, there is an exact sequence $$S \hookrightarrow M(s(T),(t(S)-s(T))_n) \oplus M(s(S),(t(T)-s(S))_n) \twoheadrightarrow T.$$ In the other cases, every exact sequence $T \hookrightarrow E \twoheadrightarrow S$ is split or has $E$ indecomposable.
\end{lem}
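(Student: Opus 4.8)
The plan is to argue exactly as in the proof of Lemma~\ref{lem:monos}: in each of the three configurations one reduces to a linearly oriented subquiver of type $A_n$, over which the morphisms and extensions between (string $=$ interval) modules are completely explicit, so that all the assertions of the lemma become routine.

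First I would check that the cyclic inequalities in (i), (ii), (iii) confine everything to a non-wrapping interval. In all three cases $\supp(S)=[s(S),t(S)]_n$ and $\supp(T)=[s(T),t(T)]_n$ with $\supp(S)\subseteq[s(T),t(T)]_n$, and the support of any module $E$ fitting into $T\hookrightarrow E\twoheadrightarrow S$ also lies in $[s(T),t(T)]_n$. Since $T$ is a brick, Lemma~\ref{lem:bricks} gives $l(T)\le n$, so this interval does not wrap all the way around the cycle and the arrows $\{\gamma_{s(T)},\ldots,\gamma_{(t(T)-1)_n}\}$ span a subquiver of type $A_n$. Hence $S$, $T$, and every such $E$ may be regarded as modules over this $A_n$-quiver.

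Over a linearly oriented $A_n$-quiver, a nonzero morphism $S\to T$ is an epimorphism precisely when the string of $T$ is a quotient substring of the string of $S$, i.e.\ when, at each of the (at most two) endpoints where the two strings differ, $S$ extends $T$ by a segment that can be factored out as a submodule; this is governed by the orientation of the arrows at $s(T)$ and $t(T)$, which in the disk model is exactly the circle/square data, and sorting by which endpoints of $S$ and $T$ coincide produces the trichotomy (i)/(ii)/(iii). In case (iii) the displayed short exact sequence is the standard pullback--pushout (Mayer--Vietoris) sequence for the two interval modules $M(s(T),(t(S)-s(T))_n)$ and $M(s(S),(t(T)-s(S))_n)$, whose supports intersect in $\supp(S)$ and cover $\supp(T)$; in cases (i) and (ii) the same interval arithmetic shows that any non-split $E$ is again an interval module, hence indecomposable. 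For the converse, if an epimorphism $S\twoheadrightarrow T$ exists, the graph-map description of morphisms between string modules (see \cite{crawley_maps,schroer_modules,brustle_combinatorics}) shows $T$ is a quotient string of $S$, which forces the endpoint configuration to be one of (i)--(iii).

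The only genuinely non-formal point is the reduction step: one must be sure the cyclic inequalities really do keep $S$, $T$, and every intermediate $E$ inside a non-wrapping interval, so that the ambient quiver is honestly of type $A_n$ rather than the full cycle $\Delta_n$. This uses $\tau$-tilting finiteness through Lemma~\ref{lem:bricks} and the fact (noted after Theorem~\ref{thm:finitegentle}) that every cycle of $Q$ contains a relation, and it is entirely parallel to the corresponding step in Lemma~\ref{lem:monos}. Alternatively, one can bypass the case analysis altogether and deduce Lemma~\ref{lem:epis} directly from Lemma~\ref{lem:monos} via the $K$-duality $D=\Hom_K(-,K)$, which interchanges $\Lambda$ with the Nakayama-like algebra $\Lambda^{\mathrm{op}}$, epimorphisms with monomorphisms, and the model $D(n,l(1),\ldots,l(n))$ with its mirror image (circles $\leftrightarrow$ squares, cyclic orientation reversed, and $s\leftrightarrow t$ on each arc); under this dictionary conditions (i), (ii), (iii) and the displayed exact sequence of Lemma~\ref{lem:epis} correspond precisely to those of Lemma~\ref{lem:monos}.
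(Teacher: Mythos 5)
Your proof takes essentially the same route as the paper: the paper's proof of this lemma is literally ``nearly identical to that of Lemma \ref{lem:monos}'', whose proof is exactly your reduction of $S$, $T$, and any extension between them to the subquiver of type $A_n$ on the arrows $\gamma_{s(T)},\ldots,\gamma_{(t(T)-1)_n}$, after which the classification of epimorphisms and extensions between interval modules in terms of the endpoint (circle/square) data is immediate. One minor wording correction: that subquiver has arbitrary orientation rather than being linearly oriented, but since your analysis is phrased in terms of the orientations at $s(T)$ and $t(T)$ this is only a slip, and your alternative deduction from Lemma \ref{lem:monos} via $K$-duality is a legitimate shortcut the paper does not use.
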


\begin{center}
\begin{tikzpicture}[scale=0.75]
	\draw (0,0) circle[radius=2cm];
	\draw (-.1,-.1) to (.1,.1);
	\draw (-.1,.1) to (.1,-.1);
		\draw[very thick,orange,dashed] plot [smooth] coordinates{(2,0)(0,1.5)(-2,0)};
		\draw[very thick,blue] plot [smooth] coordinates{(2,0)(-0.5,0.5)(-1.41,-1.41)};
		\node at (2,0) [draw, fill=white,circle,scale=0.6]{};
		\node at (-1.41,-1.41) [draw,fill=white,circle,scale=0.6, label=west:$t(S)$]{};
		\node at (-2,0) [draw,fill,circle,scale=0.6,label=west:$t(T)$]{};
	\node at (0,-2.5) {Case (i)};
\begin{scope}[shift={(7,0)}]
	\draw (0,0) circle[radius=2cm];
	\draw (-.1,-.1) to (.1,.1);
	\draw (-.1,.1) to (.1,-.1);
		\draw[very thick,orange,dashed] plot [smooth] coordinates{(2,0)(0,1.5)(-2,0)};
		\draw[very thick,blue] plot [smooth] coordinates{(-2,0)(0,-0.5)(1,0)(0,0.5)(-2,0)};
		\node at (-2,0) [draw, fill=white,circle,scale=0.6, label=west:$s(S)$]{};
		\node at (2,0) [draw, fill,rectangle,scale=0.7, label=east:$s(T)$]{};
		\node at (0,-2.5) {Case (ii)};
\end{scope}
\begin{scope}[shift={(14,0)}]
	\draw (0,0) circle[radius=2cm];
	\draw (-.1,-.1) to (.1,.1);
	\draw (-.1,.1) to (.1,-.1);
		\draw[very thick,orange,dashed] plot [smooth] coordinates{(-1.41,1.41)(1.41,1.41)};
		\draw[very thick,blue] plot [smooth] coordinates{(-1.41,-1.41)(0,0.5)(1.41,-1.41)};
		\node at (1.41,-1.41) [draw, fill=white,circle,scale=0.6, label=east:$s(S)$]{};
		\node at (-1.41,-1.41) [draw,fill=white,circle,scale=0.6, label=west:$t(S)$]{};
		\node at (1.41,1.41) [draw,fill,rectangle,scale=0.7,label=east:$s(T)$]{};
		\node at (-1.41,1.41) [draw,fill,circle,scale=0.6,label=west:$t(T)$]{};
		\node at (0,-2.5) {Case (iii)};
\end{scope}
\end{tikzpicture}
\end{center}

\begin{proof}
	The proof is nearly identical to that of Lemma \ref{lem:monos}.
\end{proof}

\begin{rem}
	It follows from the proofs of Lemma \ref{lem:monos} and Lemma \ref{lem:epis} that if $s(T) <_n s(S) <_n t(S) <_n t(T)$ (so that $S$ and $T$ do not intersect) and $s(S)$ and $t(S)$ are either both squares or both circles that $S$ and $T$ are Ext-orthogonal.
\end{rem}

\begin{lem}\label{lem:notmonoepi}
	Let $S,T \in \brick\Lambda$. Then there is a morphism $S \rightarrow T$ which is neither mono nor epi if and only if one of the following holds.
	\begin{enumerate}[label=\upshape(\roman*)]
		\item $s(S)$ and $t(T)$ are both circles, $s(T) <_n s(S) <_n t(T)$, and $s(S) <_n t(T) <_n t(S)$.
		\item $t(S)$ and $s(T)$ are both squares, $s(S) <_n s(T) <_n t(S)$, and $s(T) <_n t(S) <_n t(T)$.
	\end{enumerate}
	Moreover, in either case, there is an exact sequence $$S \hookrightarrow M(s(S),(t(T)-s(S))_n) \oplus M(s(T),(t(S)-s(T))_n) \hookrightarrow T$$ if and only if $l(S) + l(T) \leq n$ and neither $[s(T),t(S)]_n$ nor $[s(S),t(T)]_n$ contains a relation. Likewise, in case (i), there is an exact sequence $$S \hookrightarrow M(s(S),(t(T)-s(S))_n) \oplus M(s(T),n + (t(S)-s(T))_n) \hookrightarrow T$$ if and only if $l(S) + l(T) > n$ and $[s(T),s(T),t(S)]_n$ does not contain a relation. Finally, in case (ii), there is an exact sequence $$S \hookrightarrow M(s(S),n + (t(T)-s(S))_n) \oplus M(s(T),(t(S)-s(T))_n) \hookrightarrow T$$ if and only if $l(S) + l(T) > n$ and $[s(S),s(S),t(T)]_n$ does not contain a relation. Otherwise, every exact sequence $S \hookrightarrow E \twoheadrightarrow T$ is split or has $E$ indecomposable.
\end{lem}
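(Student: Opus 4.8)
The plan is to run the same argument as in Lemmas~\ref{lem:monos} and~\ref{lem:epis}---reducing everything to a genuine type-$A$ path algebra---after one preliminary step that identifies which arc configurations admit a morphism which is neither a monomorphism nor an epimorphism.

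First I would invoke the string calculus (see \cite{brustle_combinatorics}): a nonzero morphism $f\colon S\to T$ is, up to scalar, a graph map whose image $N$ is a string module that is simultaneously a factor string of $S$ and a substring of $T$, and $f$ is a monomorphism iff $N\cong S$, an epimorphism iff $N\cong T$. Thus $f$ is neither mono nor epi exactly when $N$ is a \emph{proper} factor of $S$ and a \emph{proper} submodule of $T$; translating through the bijection $\Arc(n,l(1),\dots,l(n))\to\brick\Lambda$, this says $\supp(S)$ and $\supp(T)$ overlap in an interior interval that is at once the support of a proper quotient arc of $S$ and of a proper subarc of $T$. Having already dealt, via Lemmas~\ref{lem:monos} and~\ref{lem:epis} and the Remark following Lemma~\ref{lem:epis}, with the cases where the arcs are disjoint, nested, or share an endpoint, the only remaining possibility is the ``crossing'' configuration, and requiring the overlap to be a quotient of $S$ at one end and a submodule of $T$ at the other pins down the orientations of those two endpoints. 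This yields precisely case (i)---with $s(S)$ and $t(T)$ circles and overlap arc $M(s(S),(t(T)-s(S))_n)$---and its mirror, case (ii), with $t(S)$ and $s(T)$ squares; conversely, in either configuration one writes down the graph map directly.

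For the displayed exact sequences $0\to S\to E\to T\to 0$, the middle term $E$ is the direct sum of the ``overlap'' arc $M(s(S),(t(T)-s(S))_n)$ and the ``complement'' arc $M(s(T),(t(S)-s(T))_n)$, with the appropriate summand replaced by $M(s(T),n+(t(S)-s(T))_n)$ (resp.\ $M(s(S),n+(t(T)-s(S))_n)$) when $l(S)+l(T)>n$. Such an $E$ exists as a $\Lambda$-module exactly when both summands are genuine bricks: for $l(S)+l(T)\le n$ this is the stated requirement that neither $[s(T),t(S)]_n$ nor $[s(S),t(T)]_n$ contain a relation, while for $l(S)+l(T)>n$ only the one long arm wraps around the puncture, so the single condition that $[s(T),s(T),t(S)]_n$ (resp.\ $[s(S),s(S),t(T)]_n$) contain no relation suffices. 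Granting this, one concludes as in Lemmas~\ref{lem:monos} and~\ref{lem:epis}: after lifting to the universal cover $A_\infty/J$ when necessary, $S$, $T$, and $E$ all live on a single finite interval, hence are modules over a type-$A$ path algebra, where the asserted non-split extension with decomposable middle term is the standard one; and if one of the two arms fails to be a valid brick there is simply no room for a decomposable middle, so the same computation shows that every extension of $T$ by $S$ is split or has indecomposable middle term.

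I expect the main obstacle to be the wrap-around case $l(S)+l(T)>n$: one must choose the section of $A_\infty\to\Delta_n$ so that the long arm lifts to a single interval---whose base-level support is exactly the multiset $[s(T),s(T),t(S)]_n$---check that $S$, $T$, and the short arm all lift into this interval, and match the base-level ``contains a relation'' test with admissibility upstairs, i.e.\ with $n+(t(S)-s(T))_n\le l(s(T))$. Once this dictionary is set up, the representation-theoretic content is the same routine type-$A$ bookkeeping that proves the two preceding lemmas.
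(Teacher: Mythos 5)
Your proposal follows essentially the same route as the paper's proof: factor a map that is neither mono nor epi through its image to force the crossing configuration and the stated circle/square endpoint conditions, then handle the extension statements by restricting $S$, $T$, and $E$ to a type-$A$ subquiver, passing to the universal cover in the wrap-around case $l(S)+l(T)>n$. The one point the paper treats more explicitly, which you should fold into your plan, is that in the doubly-intersecting configurations one must also examine the second generic lifting to the universal cover (where the lifted arcs meet only at an endpoint) to verify that all extensions not seen in your chosen lifting are indecomposable as in Lemma \ref{lem:indecomposableExts}, so that the final ``split or indecomposable middle term'' clause really accounts for every exact sequence; this check does not change the outcome.
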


\begin{center}
\begin{tikzpicture}[scale=0.75]
	\draw (0,0) circle[radius=2cm];
	\draw (-.1,-.1) to (.1,.1);
	\draw (-.1,.1) to (.1,-.1);
		\draw[very thick,blue] plot [smooth] coordinates{(2,0)(0,1)(-2,0)};
		\draw[very thick,orange,dashed] plot [smooth] coordinates{(-1.41,-1.41)(0,-1)(1.41,1.41)};
		\node at (1.41,1.41) [draw,fill,circle,scale=0.6,label=north east:$t(T)$]{};
		\node at (-2,0) [draw,fill=white,circle,scale=0.6,label=west:$t(S)$]{};
		\node at (2,0) [draw,fill,circle,scale=0.6,label=east:$s(S)$]{};
		\node at (-1.41,-1.41) [draw,fill=white,circle,scale=0.6,label=west:$s(T)$]{};
		\node at (0,-2.5) {Case (i)};
\begin{scope}[shift={(7,0)}]
	\draw (0,0) circle[radius=2cm];
	\draw (-.1,-.1) to (.1,.1);
	\draw (-.1,.1) to (.1,-.1);
		\draw[very thick,blue] plot [smooth] coordinates{(2,0)(0,1)(-1.41,-1.41)};
		\draw[very thick,orange,dashed] plot [smooth] coordinates{(-2,0)(0,-1)(2,0)};
		\node at (-2,0) [draw,fill,rectangle,scale=0.7,label=west:$s(T)$]{};
		\node at (2,0) [draw,fill=white,circle,scale=0.6]{};
		\node at (-1.41,-1.41) [draw,fill,rectangle,scale=0.7,label=west:$t(S)$]{};
		\node at (0,-2.5) {Case (ii)};
\end{scope}
\begin{scope}[shift={(14,0)}]
	\draw (0,0) circle[radius=2cm];
	\draw (-.1,-.1) to (.1,.1);
	\draw (-.1,.1) to (.1,-.1);
		\draw[very thick,blue] plot [smooth] coordinates{(2,0)(0,1)(-1.41,-1.41)};
		\draw[very thick,orange,dashed] plot [smooth] coordinates{(-2,0)(0,-1)(1.41,1.41)};
		\node at (1.41,1.41) [draw,fill,circle,scale=0.6,label=north east:$t(T)$]{};
		\node at (-2,0) [draw,fill,rectangle,scale=0.7,label=west:$s(T)$]{};
		\node at (2,0) [draw,fill,circle,scale=0.6,label=east:$s(S)$]{};
		\node at (-1.41,-1.41) [draw,fill,rectangle,scale=0.7,label=west:$t(S)$]{};
		\node at (0,-2.5) {Case (i) and Case (ii)};
\end{scope}
\end{tikzpicture}
\end{center}

\begin{proof}
	Suppose there is a morphism $S \rightarrow T$ which is neither mono nor epi. It follows that there exists $R \in \mods\Lambda$ and a chain of morphisms $S \twoheadrightarrow R \hookrightarrow T$. As $S$ and $T$ are bricks, we observe that $R$ cannot be a submodule of $S$ nor a quotient of $T$. It follows that $\supp(S) \nsubseteq \supp(T)$ and vice versa (where $S$ and $T$ are considered as arcs the supports are considered as ordered multisets). This shows that there can only be such a morphism if either both $s(T) <_n s(S) <_n t(T)$ and $s(S) <_n t(T) <_n t(S)$ or both $s(S) <_n s(T) <_n t(S)$ and $s(T) <_n t(S) <_n t(T)$. It can then be verified directly that such a morphism exists only when $s(S)$ and $t(T)$ are both circles or $t(S)$ and $s(T)$ are both squares.
	
	We now consider the results regarding extensions. Suppose first that the arcs corresponding to $S$ and $T$ intersect only once, so case (i) and (ii) cannot occur simultaneously. We consider only case (i), as the proof in case (ii) is nearly identical. Thus we have $s(T) <_n s(S) <_n t(T) <_n t(S)$. We ignore for now that both $t(S)$ and $s(T)$ are circles. As before, we can then consider $S, T$, and any extension as being representations of the subquiver containing only the arrows $\gamma_{s(T)},\ldots,\gamma_{(t(S)-1)_n}$. Now clearly if there exists an exact sequence $S \hookrightarrow E \twoheadrightarrow T$ which is not split, $E$ is supported on every arrow in $\{\gamma_{s(T)},\ldots,\gamma_{(t(S)-1)_n}\}$. Therefore $[t(S),s(T)]_n$ must not contain a relation. The result then follows from the $A_n$ case. In particular, such an extension does not exist unless $t(S)$ and $s(T)$ are both circles.
	
	Now consider the case that the arcs intersect twice, but one of the intersections occurs on the boundary of the disk. Again, case (i) and (ii) cannot occur simultaneously, so we consider only case (i). Thus we have $t(S) = s(T) <_n s(S) <_n t(T)$. If the quiver of $\Lambda$ is $A_n$, the result follows as above. Thus assume the quiver of $\Lambda$ is $\Delta_n$. There are then two generic ways to lift $S$ and $T$ to the universal cover. We denote by $\overline{S}, \overline{T}$ the lifts of $S$ and $T$. 
	
	We can first consider the basepoint as $t(S)$. In this case, write $\supp(S) = \supp(\overline{S}) = [s(S),t(S)]_n = \{s(S)_0,\ldots,t(S)_i\}$, where we see that $i \in \{0,1\}$. We can then consider the lifting of $T$ as having $\supp(\overline{T}) = \{t(S)_i,\ldots,t(T)_j\}$, where again we see $j \in \{1,2\}$. It follows that $\overline{S}, \overline{T}$, and any extension can be considered as representations of the subquiver of the universal cover containing only the arrows $\gamma_{s(S)_0},\ldots,\gamma_{((t(T)-1)_n)_j}$. By similar arguments as before, we observe that there is a non-split exact sequence $\overline{S} \hookrightarrow \overline{E} \twoheadrightarrow \overline{T}$ if and only if $[s(T),s(S)]_n$ does not contain a relation. In this case, we see that the projection of $\overline{E}$ back to the quiver gives an exact sequence $S\hookrightarrow E \twoheadrightarrow T$ with $E \cong M(s(S),(t(T)-s(S))_n) \oplus M(s(T),(t(S)-s(T))_n)$.
	
	To make sure we have accounted for all extensions, we also consider the other generic lifting, with basepoint $s(S)$. In this case, we see that $\overline{S}$ and $\overline{T}$ intersect only at their endpoint, so all additional extensions will be indecomposable as in Lemma \ref{lem:indecomposableExts}.
	
	The last case to consider is when the arcs corresponding to $S$ and $T$ intersect twice in the interior of the disk. This is nearly identical to the previous case, considering again the lifts with basepoints $s(S)$ and $t(S)$.
\end{proof}

\begin{rem}
	It follows from the proof of Lemma \ref{lem:notmonoepi} that if the ordering of $s(S),t(T),s(T),$ and $t(T)$ are given as in case (i) (resp. case (ii)) in the statement of the lemma, but $s(S)$ and $t(T)$ are not both circles (resp. $t(S)$ and $s(T)$ are not both squares), then every exact sequence $T \hookrightarrow E \twoheadrightarrow S$ is split or has $E$ indecomposable.
\end{rem}

We now give two results that will be critical in showing that Nakayama-like algebras have the 2-simple minded pairwise compatibility property.

\begin{prop}\label{prop:extdim1}\
	\begin{enumerate}[label=\upshape(\alph*)]
		\item Let $S, T \in \brick\Lambda$. Then for any nonsplit exact sequence $S \hookrightarrow E \twoheadrightarrow T$, $E$ is given in one of Lemmas \ref{lem:indecomposableExts}, \ref{lem:monos}, \ref{lem:epis}, \ref{lem:notmonoepi}.
		\item Let $S, T \in \brick\Lambda$. Then $\dim\Ext(T,S) \leq 1$.
	\end{enumerate}
\end{prop}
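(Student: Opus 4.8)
The plan is to derive both parts from the classification of morphisms and extensions in Lemmas \ref{lem:indecomposableExts}--\ref{lem:notmonoepi}, together with the ``reduce to a type $A_n$ subquiver, possibly inside the universal cover $A_\infty/J$'' technique already used in their proofs.

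For (a), start from a nonsplit exact sequence $0 \to S \xrightarrow{f} E \xrightarrow{g} T \to 0$ with $S,T \in \brick\Lambda$. Since $\Lambda$ is a string algebra, every indecomposable summand of $E$ is a string module, so it suffices to pin down $\supp(E)$ (as an ordered multiset) and how it decomposes. The case where $E$ is indecomposable is immediate: $E$ is then an indecomposable extension of $S$ by $T$, so Lemma \ref{lem:indecomposableExts}, with the roles of $S$ and $T$ interchanged, lists all possibilities. When $E$ is decomposable, the key point is that the arcs of $S$ and $T$ cannot be in ``general position'': an extension of string modules with decomposable middle term forces $\supp(S)$ and $\supp(T)$ either to overlap along a genuine subinterval or to share both endpoints, as one sees by composing $S \to E_i$ with $E_i \to T$ for the summands $E_i$ of $E$, or more robustly by lifting the sequence to $A_\infty/J$ and reading off supports. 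In each such configuration there is a nonzero morphism $S \to T$ or $T \to S$ that is a monomorphism, an epimorphism, or neither, and then the corresponding one of Lemmas \ref{lem:monos}, \ref{lem:epis}, \ref{lem:notmonoepi}---together with the remarks following Lemmas \ref{lem:epis} and \ref{lem:notmonoepi}, which dispatch the remaining non-intersecting orderings---identifies $E$ as one of the modules listed there. Concretely this is a finite case check over the ways two arcs of $D(n,l(1),\ldots,l(n))$ can meet, organised exactly as in the proofs of those lemmas.

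For (b), any nonsplit extension of $T$ by $S$ is, as in those same proofs, supported on a subquiver of type $A_n$ once one passes (if necessary) to the universal cover $A_\infty/J$; there $S$ and $T$ become interval modules, between which $\dim\Ext$ is at most $1$ (this is, in the present setting, the content of the bases for extensions between string modules given in \cite{canakci_extensions,brustle_combinatorics}). The only subtlety is to rule out a ``second'' extension class obtained by going the other way around the cycle $\Delta_n$, i.e.\ two simultaneous endpoint overlaps of $S$ and $T$, one at $t(S)=s(T)$ and one at $s(S)=t(T)$. This is precisely where $\tau$-tilting finiteness is needed: since $\Lambda$ is representation finite it is not of the form $K\widetilde{A}_n$, so the cycle of $\Delta_n$ must contain a relation; hence whenever $l(S)+l(T) \geq n$ the support multiset $[x,x,y]_n$ governing the second overlap spans the entire cycle and therefore contains a relation, so that configuration is never realised. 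Consequently, up to isomorphism---and hence, as $S$ and $T$ are bricks, up to scalar---there is at most one nonsplit extension of $T$ by $S$, giving $\dim\Ext(T,S) \leq 1$. I expect the main obstacle to lie in (a): enumerating every decomposable middle term that can occur and matching it to the correct sub-case of Lemmas \ref{lem:monos}--\ref{lem:notmonoepi}, including the wrap-around regime $l(S)+l(T) \geq n$. Once (a) is in hand, (b) amounts to little more than the observation that a fixed pair of arcs realises only one such configuration.
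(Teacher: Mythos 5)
Your part (a) is essentially the paper's argument: indecomposable middle terms are exactly those of Lemma \ref{lem:indecomposableExts}, and a decomposable nonsplit middle term forces the two supports to meet, which places you in Lemmas \ref{lem:monos}, \ref{lem:epis} or \ref{lem:notmonoepi}; no complaint there. The gap is in part (b), at the step ``the support multiset $[x,x,y]_n$ governing the second overlap spans the entire cycle and therefore contains a relation.'' That implication is false as a general principle. In the paper's conventions, ``$[i,i,j]_n$ contains a relation'' means $n+(j-i)_n>l(i)$, and a walk once (or slightly more than once) around the cycle starting at $i$ can perfectly well be a string even when $I\neq 0$: if the monomial relation $\rho$ has $i$ as an \emph{interior} vertex, the walk meets the tail of $\rho$ at its start and the head of $\rho$ at its end, but never the whole of $\rho$ consecutively. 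For instance, on $\Delta_n$ with one source, one sink and a single long relation $\rho$, one has $l(i)\geq n$ for every interior vertex $i$ of $\rho$, although the cycle certainly contains a relation. So ``$\Lambda\ncong K\widetilde{A}_n$, hence a relation on the cycle, hence the wrap-around interval contains it'' does not by itself kill the second extension class, and this is precisely the point where the real content of (b) sits.

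What does rule out a second class is an orientation argument, and this is what the paper's short remark (``since there exist both marked points which are circles and which are squares, one of these intervals will contain a relation'') is doing. After the reduction of part (a), two independent classes in $\Ext(T,S)$ force a configuration in which $s(S)$ and $t(T)$ are circles, $t(S)$ and $s(T)$ are squares, and \emph{both} governing intervals $[s(S),t(T)]_n$ and $[s(T),t(S)]_n$ (or their wrap-around versions $[s(S),s(S),t(T)]_n$, $[s(T),s(T),t(S)]_n$) are relation-free. For both to avoid a relation $\rho$, $\rho$ would have to straddle both of the relevant marked points as interior vertices; but the interior vertices of a directed monomial relation are all circles or all squares, according to which way $\rho$ runs around the cycle, so it cannot straddle a circle and a square simultaneously. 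Your proposal replaces this step by the false ``covers the cycle $\Rightarrow$ contains a relation.'' Relatedly, your ``i.e.\ two simultaneous endpoint overlaps, one at $t(S)=s(T)$ and one at $s(S)=t(T)$'' misidentifies the dangerous configuration: the case the paper's proof actually centres on (and illustrates) is two interior crossings of the arcs with no shared endpoints, and the mixed case (one shared endpoint plus one crossing) also has to be dispatched; all of them need the circle/square analysis, not just the shared-endpoint one. Your universal-cover framing and the bound $\dim\Ext\leq 1$ for each fixed relative position are fine; it is only the exclusion of a second contributing position that is missing.
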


\begin{proof}
	(1) Let $S,T \in \brick\Lambda$. Lemma \ref{lem:indecomposableExts} has already characterized all possible indecomposable extensions between $S$ and $T$. Thus all other extensions occur when $\supp(S) \cap \supp(T) \neq \emptyset$. Thus either $\supp(S) \subset \supp(T)$, in which case we are in the setting of Lemmas \ref{lem:monos} and \ref{lem:epis}, or neither is contained in the other, in which case we are in the setting of Lemma \ref{lem:notmonoepi}.
	
	(2) Suppose $\dim\Ext(T,S) > 1$. If $\supp(S) \subsetneq \supp(T)$ or $\supp(T) \subsetneq \supp(S)$, the result is clear. Thus in order for there to be at least two extensions, we must have that $s(S)$ and $t(T)$ are both circles, $t(S)$ and $s(T)$ are both squares, and neither $[s(S),t(T)]_n$ (resp. $[s(S),s(S),t(T)]_n$) nor $[s(T),t(S)]_n$ (resp. $[s(T),s(T),t(S)]_n$) contains a relation. However, since there exist both marked points which are circles and which are squares, one of these intervals will contains a relation, a contradiction. See the picture below for an illustration.
\end{proof}

\begin{center}
\begin{tikzpicture}[scale=0.9]
	\draw (0,0) circle[radius=2cm];x
	\draw (-.1,-.1) to (.1,.1);
	\draw (-.1,.1) to (.1,-.1);
		\draw[very thick,blue] plot [smooth] coordinates{(1.41,1.41)(-1,0)(0,-2)};
		\draw[very thick,orange,dashed] plot [smooth] coordinates{(-1.41,-1.41)(1,0)(0,2)};
		\node at (1.41,1.41) [draw,fill,circle,scale=0.6,label=north east:$s(S)$]{};
		\node at (0,-2) [draw,fill,rectangle,scale=0.7,label=below:$t(S)$]{};
		\node at (-1.41,-1.41) [draw,fill,rectangle,scale=0.7,label=south west:$s(T)$]{};
		\node at (0,2) [draw,fill,circle,scale=0.6,label=above:$t(T)$]{};
\end{tikzpicture}
\end{center}

\begin{cor}\label{cor:nakayamaSelfExt}
	Let $\Lambda$ be a Nakayama-like algebra. If there exists $S \in \brick\Lambda$ such that $S$ has a nontrivial self-extension, then $\Lambda$ is a Nakayama algebra.
\end{cor}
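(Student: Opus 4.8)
The plan is to prove the contrapositive: assuming $\Lambda$ is not Nakayama, I would show that no brick admits a nontrivial self-extension. The first step is to determine precisely which bricks can have a self-extension. Given $S \in \brick\Lambda$ and a nonsplit short exact sequence $S \hookrightarrow E \twoheadrightarrow S$, Proposition \ref{prop:extdim1}(1) tells us $E$ is one of the middle terms classified in Lemmas \ref{lem:indecomposableExts}, \ref{lem:monos}, \ref{lem:epis}, \ref{lem:notmonoepi}. I would discard the decomposable possibilities: those arising in Lemmas \ref{lem:monos}, \ref{lem:epis}, \ref{lem:notmonoepi} require four \emph{distinct} marked points $s(S), t(S), s(T), t(T)$ in a strict cyclic order, which cannot occur when $T = S$, while any nonsplit extension with indecomposable middle term is already covered by Lemma \ref{lem:indecomposableExts}. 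Thus $E$ is indecomposable and appears in Lemma \ref{lem:indecomposableExts} with both bricks equal to $S$; cases (i) and (ii) there then force $s(S) = t(S) =: i$ with $i$ a (solid) circle or square, so $S = M(i,n)$ is the unique closed arc at $i$, and the extension exists exactly when the multiset $[i,i,i]_n$ contains no relation, i.e.\ when $l(i) \ge 2n$.

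The second step is to show that $\Lambda$ not Nakayama forces $l(i) < 2n$ for every marked point $i$. For $\Lambda = KA_n/I$ this is easy: every string is a non-backtracking walk on a path quiver, hence monotone, hence visits at most $n$ vertices, so $l(i) \le n$. For $\Lambda = K\Delta_n/I$, I would first observe that if $\Delta_n$ were cyclically oriented then $\Lambda$ would be cyclic Nakayama; so $\Delta_n$ is not cyclically oriented, and since $\Lambda \ncong K\widetilde{A}_n$ the admissible monomial ideal $I$ is nonzero, hence contains a relation $\rho = \gamma_a\gamma_{a+1}\cdots\gamma_{a+r-1}$ which is a directed path of length $2 \le r \le n-1$. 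The crux is the claim that any string of length $n+r$ starting at $i$ contains $\rho$ (or $\rho^{-1}$) as a consecutive subword: such a string uses $n+r-1$ consecutive arrows $\gamma_i,\gamma_{i+1},\dots$ around the cycle, which by $n$-periodicity (and $r \le n$) contains the block $\gamma_a,\dots,\gamma_{a+r-1}$; since these arrows form a directed path and the string winds through the vertices in order, the string traverses this block monotonically, so as $\rho^{\pm1}$. As $\rho \in I$, this violates condition (c) of Definition \ref{def:strings}, so $l(i) \le n+r-1 < 2n$. Together with the first step and the remark that the one remaining case (cyclically oriented $\Delta_n$) is exactly the Nakayama case, this completes the argument.

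I expect the main obstacle to be the bookkeeping behind the crux claim in the second step. One has to track cyclic indices carefully to be sure the winding string really does meet the full relation block $\gamma_a,\dots,\gamma_{a+r-1}$ consecutively (using that a window of $n+r-1$ terms in an $n$-periodic sequence contains every $r$-term block), and then check that the traversal directions imposed by the string-algebra axioms make this block literally $\rho$ or $\rho^{-1}$, rather than some mixed alternating word that need not lie in $I$ — this uses that at each interior vertex of a directed path the two incident arrows are head-to-tail, forcing monotone traversal. By comparison, the reduction in the first step and the $A_n$ bound should be routine once Proposition \ref{prop:extdim1} and the combinatorial model of this section are in hand.
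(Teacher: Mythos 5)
Your argument is correct and is essentially the paper's proof run in the contrapositive: both reduce, via the extension classification of this section (Proposition \ref{prop:extdim1} and Lemma \ref{lem:indecomposableExts}), to a closed arc with $l(s(S)) \geq 2n$, force the quiver to be $\Delta_n$ with $I \neq 0$, and rule out mixed orientation because the doubly-winding string would have to traverse a relation. Your explicit window-and-relation-block argument simply spells out the step the paper compresses into the containment $[i,i]_n \subset [s(S),s(S),s(S)]_n$.
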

\begin{proof}
	Let $S \in \brick\Lambda$ and suppose there is a non-split exact sequence $S \hookrightarrow E\twoheadrightarrow S$. It follows from Lemma \ref{lem:indecomposableExts} that $l(S) = n$ and that $[s(S),s(S),s(S)]_n$ does not contain a relation. In particular, the quiver of $\Lambda$ is $\Delta_n$. Therefore, since $\Lambda$ is Nakayama-like, $I \neq (0)$. As $[i,i]_n \subset [s(S),s(S),s(S)]_n$ for all $1 \leq i \leq n$, this implies that all arrows of the quiver of $\Lambda$ must be oriented in the same direction. We conclude that $\Lambda$ is a Nakayama algebra.
\end{proof}

\begin{cor}\label{cor:nakayamahomdims}
	Let $\Lambda$ be a Nakayama-like algebra. Then for all $S,T\in\brick\Lambda$, we have the following.
		\begin{enumerate} [label=\upshape(\alph*)]
			\item If there exists a morphism $S \rightarrow T$ which is mono or epi, then it is a left minimal $(\Filt T)$-approximation.
			\item If there does not exist a morphism $S\rightarrow T$ which is mono or epi, then there does not exist a left minimal $(\Filt T)$-approximation $S \rightarrow T_S$ which is mono or epi.
		\end{enumerate}
\end{cor}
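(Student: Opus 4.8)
The plan is to use the combinatorial model: by Lemma \ref{lem:bricks} and the surrounding discussion every brick is a thin (multiplicity-free) string module $M(i,j)$ with $j\leq n$, realized as an arc, and Lemmas \ref{lem:indecomposableExts}--\ref{lem:notmonoepi}, together with Proposition \ref{prop:extdim1} and Corollary \ref{cor:nakayamaSelfExt}, control all morphisms and extensions between bricks. Since $\Lambda$ is $\tau$-tilting finite, $\Filt T$ is functorially finite, so left minimal $(\Filt T)$-approximations exist and the task is to identify them. Thinness gives two observations used throughout: a nonzero mono $S\hookrightarrow T$ forces $\supp(S)\subseteq\supp(T)$, a nonzero epi $S\twoheadrightarrow T$ forces $\supp(T)\subseteq\supp(S)$, and hence (comparing total dimensions) for $S\ncong T$ at most one of these can occur.

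For (a), let $f\colon S\to T$ be mono or epi. It is automatically left minimal, since $\End(T)$ is local and $f\neq 0$ (if $hf=f$ with $h$ non-invertible, then $h$ is nilpotent, yet $h^kf=f\neq 0$). So it suffices to show $f$ is a left $(\Filt T)$-approximation. \emph{Case 1: $T$ has no self-extension.} Then $\Filt T=\add T$, so it is enough that $\dim\Hom(S,T)=1$, for then $f$ is the canonical left $\add T$-approximation $S\to T$. If $S\cong T$ this is $\dim\End(T)=1$ (Lemma \ref{lem:bricks}); if $S\ncong T$, Lemmas \ref{lem:monos} and \ref{lem:epis} place $(S,T)$ in a configuration whose cyclic orderings force the relevant arc to have length $<n$, reducing the computation of $\Hom(S,T)$ to a hereditary type-$A$ subquiver, where $\Hom$-spaces between indecomposables are at most one-dimensional. \emph{Case 2: $T$ has a self-extension.} By Corollary \ref{cor:nakayamaSelfExt}, $\Lambda$ is Nakayama, so all modules are uniserial, $T=M(i,n)$, and $\Filt T=\add\{M(i,kn)\}$. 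A uniserial computation shows there is no epi $S\twoheadrightarrow T$ with $S\ncong T$, and that every $\phi\colon S\to M(i,kn)$ has image a short uniserial submodule lying inside the length-$n$ submodule $T\subseteq M(i,kn)$, hence factors through $f$.

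For (b), assume there is no nonzero mono or epi $S\to T$ (so $S\ncong T$), and suppose $g\colon S\to T_S$ is a nonzero left minimal $(\Filt T)$-approximation which is mono or epi. \emph{Case 1: $T$ has no self-extension}, so $T_S\cong T^d$ with $d\geq 1$. If $g$ is epi, a projection $T^d\to T$ gives an epi $S\to T$ --- a contradiction. If $g$ is mono, each component $g_\ell\colon S\to T$ is nonzero and neither mono nor epi (else a contradiction), so by Lemma \ref{lem:notmonoepi} it factors as $S\twoheadrightarrow R_\ell\hookrightarrow T$; then $g$ factors through $\bigoplus_\ell R_\ell$, and injectivity of $g$ forces $\dim_v S\leq\sum_\ell\dim_v R_\ell$ at every vertex $v$, whence $\supp(S)\subseteq\bigcup_\ell\supp(R_\ell)\subseteq\supp(T)$. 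But the proof of Lemma \ref{lem:notmonoepi} then shows every nonzero map $S\to T$ is mono or epi, contradicting $\Hom(S,T)\neq 0$. \emph{Case 2: $T$ has a self-extension}, so $\Lambda$ is Nakayama, $T=M(i,n)$, $T_S=\bigoplus_\ell M(i,k_\ell n)$. If $g$ is epi, projecting onto a summand and truncating gives an epi $S\twoheadrightarrow T$. If $g$ is mono, the simple socle of the uniserial module $S$ forces its image into a single summand $M(i,k_0n)$, which also contains $T$ as a submodule; since submodules of a uniserial module are totally ordered, either $S\hookrightarrow T$ or $T\hookrightarrow S$, the latter forcing $n\leq l(S)\leq n$, so $S\cong T$ --- a contradiction in either case.

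The main obstacle is Case 2 in each part: when $T$ has a self-extension, $\Filt T$ is properly larger than $\add T$ (it is the module category of a truncated polynomial ring), so the approximations are no longer powers of $T$ and the argument must exploit the uniserial structure granted by Corollary \ref{cor:nakayamaSelfExt}. A secondary point is deriving $\dim\Hom(S,T)=1$ in (a) directly from the string combinatorics via the type-$A$ reductions in Lemmas \ref{lem:monos} and \ref{lem:epis}. The degenerate possibility $\Hom(S,T)=0$ in (b) is harmless once "mono or epi" is read as "nonzero and mono or epi", matching the hypothesis of Proposition \ref{prop:pairwise2}.
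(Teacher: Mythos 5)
Your overall strategy is the same as the paper's: part (a) rests on the classification of monos and epis between bricks (Lemmas \ref{lem:monos}, \ref{lem:epis}) giving $\dim\Hom(S,T)=1$, plus the dichotomy of Corollary \ref{cor:nakayamaSelfExt}, and part (b) rests on Lemma \ref{lem:notmonoepi}. Two of your choices are genuinely different and worthwhile: in (a) you replace the paper's citation of \cite[Cor.~3.5]{hanson_tau} for the self-extension (Nakayama) case by a direct uniserial computation with $\Filt T=\add\{M(i,kn)\}$, making the statement self-contained, and in (b) you expand the paper's one-line support argument into an explicit analysis of a hypothetical mono or epi approximation; your epi subcases (project onto a summand and pass to the top quotient $M(i,k\ell n)\twoheadrightarrow T$) are clean and complete.

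There is, however, one step in (b), Case 1, mono subcase, that fails as written. From $S\hookrightarrow\bigoplus_\ell R_\ell$ you deduce $\supp(S)\subseteq\supp(T)$ \emph{as vertex sets}, and then assert that the proof of Lemma \ref{lem:notmonoepi} forces every nonzero map $S\to T$ to be mono or epi. That implication is false when $T$ has length $n$ (so that its module-theoretic support is all of $Q_0$ and set containment carries no information): for the cyclic Nakayama algebra $K\Delta_3/\rad^3$, take $S=M(3,2)$ and $T=M(1,3)$; then $\supp(S)\subseteq\supp(T)$, $T$ has no self-extension, there is no mono or epi $S\to T$, yet $\Hom(S,T)\neq 0$ and the unique nonzero map (with image the simple socle of $T$) is neither mono nor epi. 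The incomparability statement in the proof of Lemma \ref{lem:notmonoepi} is about arc supports as \emph{ordered} multisets, which is strictly stronger than the vertex-set containment your dimension count provides, so the contradiction does not follow in this corner case. The case does need an argument (in the example above a socle comparison rules out any embedding $S\hookrightarrow T^d$, just as in your Case 2, and this is the natural way to patch it), but as written the length-$n$ situation is not covered. For fairness: the paper's own proof of (b) is a one-liner invoking the same support incomparability, so the delicacy is inherited rather than introduced by you; still, your write-up makes the set/ordered-support substitution explicit, and that substitution is exactly where the argument needs repair.
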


\begin{proof}
	(a) Such morphisms are completely characterized in Lemmas \ref{lem:monos} and \ref{lem:epis}. We see that in all cases, $\dim\Hom(S,T) = 1$. If $T$ does not have self extensions we are done. Otherwise, $\Lambda$ is Nakayama by Corollary \ref{cor:nakayamaSelfExt}. The result is shown explicitly in this case in \cite[Cor. 3.5]{hanson_tau}.
	
	(b) This follows immediately from Lemma \ref{lem:notmonoepi} because in all three cases, $\supp(S) \nsubseteq \supp(T)$ and vice versa.
\end{proof}

We are now ready to prove that $\Lambda$ has the 2-simple minded pairwise compatibility property.

\begin{thm}\label{thm:cyclicpairwise}
	Let $\Lambda$ be a Nakayama-like algebra and let $\X = \S_p \sqcup \S_n[1]$ be a singly left mutation compatible semibrick pair. Then for every $S \in \S_p$, the semibrick pair $\mu^+_{S,\X}(\X)$ is singly left mutation compatible.
\end{thm}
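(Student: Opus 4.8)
The plan is to reduce the claim to a combinatorial assertion about crossings of arcs, and then to prove that assertion by a case analysis on the elements of $\mu^+_{S,\X}(\X)$.

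First I would record the reduction. By Corollary~\ref{cor:nakayamahomdims}, for bricks $A,B$ the left minimal $(\Filt B)$-approximation $g^+_{B,A}\colon A\to B_A$ is a monomorphism or an epimorphism if and only if $\Hom(A,B)=0$ or there exists a monomorphism or epimorphism $A\to B$ (the forward direction uses Corollary~\ref{cor:nakayamahomdims}(b), the backward direction uses~(a) together with the observation that an approximation with zero target is harmless by Remark~\ref{rem:mono epi motivation}). Writing $\mu^+_{S,\X}(\X)=\S'_p\sqcup\S'_n[1]$, proving this pair singly left mutation compatible therefore amounts to showing that for all $S'\in\S'_p$ and $T'\in\S'_n$ there is no morphism $T'\to S'$ which is neither mono nor epi; by Lemma~\ref{lem:notmonoepi}, this is exactly the statement that the arcs of $S'$ and $T'$ never cross as in cases (i) or (ii) of that lemma.

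Next I would describe $\mu^+_{S,\X}(\X)$ explicitly. If $\Lambda$ is Nakayama the claim is already known (\cite{hanson_tau}), so assume $\Lambda$ is not Nakayama; then by Corollary~\ref{cor:nakayamaSelfExt} no brick has a nontrivial self-extension, so $\Filt R=\add R$ for every brick $R$, and by Proposition~\ref{prop:extdim1}(b) together with the hypothesis that $\X$ is singly left mutation compatible, every left minimal $(\Filt S)$-approximation arising in the mutation has target $0$ or $S$. Hence, by Definition~\ref{def:mutation}(b), each element of $\S'_p$ is either $T^{*}:=\mu^+_{S,\X}(T)$ for some $T\in\S_p\setminus\{S\}$ (with a short exact sequence $S\hookrightarrow T^{*}\twoheadrightarrow T$ when $\Ext(T,S)\neq 0$, and $T^{*}=T$ otherwise), or $U^{*}:=\coker(g^+_{S,U})$ for some $U\in\S_n$ with $g^+_{S,U}\colon U\to S$ mono (with a short exact sequence $U\hookrightarrow S\twoheadrightarrow U^{*}$); and each element of $\S'_n$ is either $S$ itself or $V^{*}:=\ker(g^+_{S,V})$ for some $V\in\S_n$ with $g^+_{S,V}\colon V\to S$ epi (with a short exact sequence $V^{*}\hookrightarrow V\twoheadrightarrow S$). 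In each nontrivial case, Lemmas~\ref{lem:indecomposableExts},~\ref{lem:monos}, and~\ref{lem:epis} pin down the source, target, length, support, and endpoint decorations of the new brick in terms of those of $S$ and of the original brick.

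Finally I would run the case analysis: $S'\in\{T^{*},U^{*}\}$ against $T'\in\{S,V^{*}\}$, which is at most four combinations. In each one I would suppose for contradiction that the arcs of $S'$ and $T'$ cross as in Lemma~\ref{lem:notmonoepi}(i) or (ii); translating that crossing back through the short exact sequences above yields a configuration of the arcs of $S,T,U,V$ (and of the maps $g^+_{S,-}$) from which one extracts a contradiction with one of: the Hom-orthogonality of $\S_p$ or of $\S_n$; the semibrick-pair condition $\Hom(\S_p,\S_n[m])=0$ for $m\in\{0,1\}$; the hypothesis that $\X$ is singly left mutation compatible; or the perpendicularity statements of Lemma~\ref{lem:approxconditions}. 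As in the proof of Proposition~\ref{prop:extdim1}, the contradiction ultimately comes from the presence of both circle and square vertices in $Q$: any interval that such a crossing would force to be relation-free in fact contains a relation. I expect the main obstacle to be precisely this last step — the bookkeeping of how endpoint marked points and their circle/square labels transform under the three types of short exact sequence above (for instance $t(T^{*})=t(T)$, $s(T^{*})=s(S)$, with the label at the amalgamated vertex governed by Lemma~\ref{lem:indecomposableExts}), and the verification, subcase by subcase, that a crossing of $S'$ with $T'$ would pull back to one already excluded for $\X$. The guiding principle is clean — mutation at $S$ cannot manufacture a new obstruction — but the geometric verification in the disk is lengthy.
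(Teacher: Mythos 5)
Your reduction is sound and matches the paper's own first steps: by Corollary \ref{cor:nakayamahomdims} and Lemma \ref{lem:notmonoepi}, single left mutation compatibility of $\mu^+_{S,\X}(\X)$ comes down to excluding a morphism $T'\to S'$ ($S'\in\S'_p$, $T'\in\S'_n$) that is neither mono nor epi, and your structural description of the mutated pair (each new positive brick is an unchanged $T$, an extension $S\hookrightarrow T^*\twoheadrightarrow T$, or a cokernel $U\hookrightarrow S\twoheadrightarrow U^*$; each new negative brick is $S$ or a kernel $V^*\hookrightarrow V\twoheadrightarrow S$) is correct, as is the observation that the $T'=S$ case is harmless (the paper dispatches it via Proposition \ref{prop:mutation}(a)). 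The reduction of the Nakayama case to the known result of \cite{hanson_tau} via Proposition \ref{prop:pairwise2} is a legitimate shortcut the paper does not take.

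However, there is a genuine gap: everything after the setup is deferred. The statement "translating that crossing back through the short exact sequences yields a configuration \ldots from which one extracts a contradiction" is precisely the content of the theorem, and you explicitly flag that you have not carried it out. In the paper this is the bulk of the proof (Claims 1 and 2), and it is not a formal pullback of an obstruction already present in $\X$: several subcases are killed not by exhibiting a forbidden crossing in $\X$ but by producing, from the arc configuration, a nonzero morphism such as $Y'\to S$, $X\to Y'$, $X'\to S$ or $Y\to X$ violating Hom-orthogonality or the semibrick-pair condition, or by a length/relation contradiction (e.g.\ $l(Y)=l(Y')+l(S)>n$, or the forced relation on $[s(Y'),t(X')]_n$ making all marked points there circles, which drives the endpoint bookkeeping). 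Whether each of the roughly half-dozen subcases actually closes depends on exactly this circle/square and relation analysis, so the "guiding principle" cannot be taken for granted; until the case analysis is written out, the proposal is an outline of the paper's strategy rather than a proof.
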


\begin{proof}
	Let $\X = \S_p \sqcup \S_n[1]$ be a singly left mutation compatible semibrick pair. If $\S_p = \emptyset$, then we are done. Otherwise, let $S \in \S_p$ and let $\mu^+_{S,\X}(\X) =: \X' = \S'_p \sqcup \S'_n[1]$. Assume there exists $X' \in \S'_p$ and $Y' \in \S'_n$ such that there is a map $Y' \rightarrow X'$ which is not mono or epi. By Corollary \ref{cor:nakayamahomdims}, this is equivalent to assuming there exists a left minimal $(\Filt X')$-approximation $Y' \rightarrow X'_{Y'}$ which is not mono or epi.
	
	We first observe that $\X'$ is singly right mutation compatible at $S$ by Proposition \ref{prop:mutation}(a), so $Y' \neq S$. We can thus assume without loss of generality that $\X' = X' \sqcup Y'[1]\sqcup S[1]$. Moreover, by the above lemmas, the arcs corresponding to $X'$ and $Y'$ (which we also refer to as $X'$ and $Y'$ by abuse of notation) intersect (at least once) in the interior of the disk. By Lemma \ref{lem:notmonoepi}, we can thus assume without loss of generality that we have $s(X') <_n s(Y') <_n t(X')$ and $s(Y') <_n t(X') <_n t(Y')$, that $s(Y')$ and $t(X')$ are circles, and that $[s(X'),t(Y')]_n$ (resp. $[s(X'),s(X'),t(Y')]_n$ if $s(X') <_n t(Y') <_n s(Y')$) contains a relation. Moreover, this relation cannot be contained in either interval $[s(X'),t(X'))_n$ or the interval $[s(Y'),t(Y'))$, which implies that the relation contains the interval $[s(Y'),t(X')]_n$. In particular, this means all marked points in the interval $[s(Y'),t(X')]_n$ are circles. The generic diagram is shown below, where it is possible that $s(X') = t(Y')$ or their order is flipped.
	
	\begin{center}
\begin{tikzpicture}[scale=0.9]
	\draw (0,0) circle[radius=2cm];
	\draw (-.1,-.1) to (.1,.1);
	\draw (-.1,.1) to (.1,-.1);
		\draw[very thick,blue] plot [smooth] coordinates{(1.41,1.41)(-1,0)(0,-2)};
		\draw[very thick,orange,dashed] plot [smooth] coordinates{(-1.41,-1.41)(1,0)(0,2)};
		\node at (1.41,1.41) [draw,fill,circle,scale=0.6,label=north east:$s(Y')$]{};
		\node at (0,-2) [draw, fill=white,circle,scale=0.6,label=below:$t(Y')$]{};
		\node at (-1.41,-1.41) [draw,fill=white,circle,scale=0.6,label=south west:$s(X')$]{};
		\node at (0,2) [draw,fill,circle,scale=0.6,label=above:$t(X')$]{};
\end{tikzpicture}
\end{center}
	
	Claim 1: $Y' \notin \S_n$. Indeed, assume otherwise and recall that $S$ and $Y'$ are Hom orthogonal. Since $\X$ is singly left mutation compatible, it follows that $X' \notin \S_p$. There are then two possibilities.
	
	Suppose first that there exists $X \in \S_n$ such that $X' = \mu^+_{S,\X}(X[1])$, so there is an exact sequence $X \hookrightarrow S \twoheadrightarrow X'$. It follows from Proposition \ref{prop:extdim1} that $l(S) > l(X')$ and either $t(S) = t(X')$ or $s(S) = s(X')$. We observe that if $t(S) = t(X')$ then by Lemma \ref{lem:notmonoepi}, there is a map $Y' \rightarrow S$, a contradiction. Thus we have $s(S) = s(X')$, and hence $t(S) = t(X)$, as shown below where $X$ is the dotted green arc.
	
\begin{center}
\begin{tikzpicture}[scale=0.9]
	\draw (0,0) circle[radius=2cm];
	\draw (-.1,-.1) to (.1,.1);
	\draw (-.1,.1) to (.1,-.1);
		\draw[very thick,blue] plot [smooth] coordinates{(1.41,1.41)(-1,0)(0,-2)};
		\draw[very thick,orange] plot [smooth] coordinates{(-1.41,-1.41)(1,0)(0,2)};
		\draw[very thick, red] plot [smooth] coordinates {(-1.41,-1.41)(0.5,0)(-1.41,1.41)};
		\draw[very thick, green,dotted] plot [smooth] coordinates {(0,2)(-1.41,1.41)};
		\node at (1.41,1.41) [draw,fill,circle,scale=0.6,label=north east:$s(Y')$]{};
		\node at (0,-2) [draw, fill=white,circle,scale=0.6,label=below:$t(Y')$]{};
		\node at (-1.41,-1.41) [draw,fill=white,circle,scale=0.6,label=south west:$s(X')$]{};
		\node at (0,2) [draw,fill,circle,scale=0.6,label=above:$t(X')$]{};
		\node at (-1.41,1.41) [draw,fill=white,circle,scale=0.6,label=north west:$t(S)$]{};
\end{tikzpicture}
\end{center}
	
	We observe that $t(X') <_n t(S) <_n t(Y')$ since $S$ is a brick and $[s(X'),t(Y')]_n$ contains a relation if $t(Y') \leq_n s(X') <_n s(Y')$. Now, if $t(S)$ is a circle, then there is a map $Y' \rightarrow S$ by Lemma \ref{lem:notmonoepi}. Otherwise, $t(S)$ is a square and by Lemma \ref{lem:monos} there is a map $X \rightarrow Y'$, a contradiction. This shows that there does not exist $X \in \S_n$ such that $X' = \mu^+_{S,\X}(X[1])$.
	
	It follows that there must exist $X \in \S_p$ such that $X' = \mu^+_{S,\X}(X)$, so there is an exact sequence $S \hookrightarrow X' \twoheadrightarrow X$. Thus $l(S) < l(X')$ and either $s(S) = s(X')$ or $t(S) = t(X')$ by Proposition \ref{prop:extdim1}.
	
	Assume first that $s(S) = s(X')$. Then since $S \hookrightarrow X'$, it follows from Lemma \ref{lem:monos} that $t(S) = s(X)$ is a square. Since every marked point in the interval $[s(Y'),t(X')]_n$ is a circle, this means $t(S) \in (s(X'),s(Y'))_n$, as shown below where $X$ is the dotted green arc.
	
\begin{center}
\begin{tikzpicture}[scale=0.9]
	\draw (0,0) circle[radius=2cm];
	\draw (-.1,-.1) to (.1,.1);
	\draw (-.1,.1) to (.1,-.1);
		\draw[very thick,blue] plot [smooth] coordinates{(1.41,1.41)(-1,0)(0,-2)};
		\draw[very thick,orange] plot [smooth] coordinates{(-1.41,-1.41)(1,0)(0,2)};
		\draw[very thick, red] plot [smooth] coordinates {(-1.41,-1.41)(1,-1)(2,0)};
		\draw[very thick, green,dotted] plot [smooth] coordinates {(0,2)(2,0)};
		\node at (1.41,1.41) [draw,fill,circle,scale=0.6,label=north east:$s(Y')$]{};
		\node at (0,-2) [draw, fill=white,circle,scale=0.6,label=below:$t(Y')$]{};
		\node at (-1.41,-1.41) [draw,fill=white,circle,scale=0.6,label=south west:$s(X')$]{};
		\node at (0,2) [draw,fill,circle,scale=0.6,label=above:$t(X')$]{};
		\node at (2,0) [draw,fill=white,circle,scale=0.6,label=east:$t(S)$]{};
\end{tikzpicture}
\end{center}
	
	We observe that in this case, there is a nonzero map $Y' \rightarrow X$ which is neither mono nor epi by Lemma \ref{lem:notmonoepi}, a contradiction. We conclude that $t(S) = t(X')$. Thus since $\Hom(Y',S) = 0$, we must have that $s(Y') <_n s(S) = t(X) <_n t(X')$. This means $t(X)$ is a circle so as before, there is a map $Y' \rightarrow X$ which is neither mono nor epi, a contradiction. This finishes the proof of Claim 1.
	
	Claim 2: There does not exist $Y \in \S_n$ such that $\mu^+_{S,\X}(Y'[1]) = Y[1]$. Indeed, assume otherwise, so there is an exact sequence $Y' \hookrightarrow Y \twoheadrightarrow S$. Thus $l(Y) > l(Y')$ and either $s(Y) = s(Y')$ or $t(Y) = t(Y')$. There are then three possibilities.
	
	Suppose first that $X' \in \S_p$. If $s(Y) = s(Y')$, then there is a map $Y \rightarrow X'$ which is neither mono nor epi, a contradiction. Thus by Proposition \ref{prop:extdim1}, we have $t(Y) = t(Y')$. As before, we can conclude that $s(X') <_n s(Y) <_n s(Y')$, as pictured below, where $S$ is the dotted green arc.
	
\begin{center}
\begin{tikzpicture}[scale=0.9]
	\draw (0,0) circle[radius=2cm];
	\draw (-.1,-.1) to (.1,.1);
	\draw (-.1,.1) to (.1,-.1);
		\draw[very thick,blue] plot [smooth] coordinates{(1.41,1.41)(-1,0)(0,-2)};
		\draw[very thick,orange] plot [smooth] coordinates{(-1.41,-1.41)(1,0)(0,2)};
		\draw[very thick, red] plot [smooth] coordinates {(2,0)(0,0.5)(-0.5,0)(0,-2)};
		\draw[very thick, green,dotted] plot [smooth] coordinates {(1.41,1.41)(2,0)};
		\node at (1.41,1.41) [draw,fill,circle,scale=0.6,label=north east:$s(Y')$]{};
		\node at (0,-2) [draw, fill=white,circle,scale=0.6,label=below:$t(Y')$]{};
		\node at (-1.41,-1.41) [draw,fill=white,circle,scale=0.6,label=south west:$s(X')$]{};
		\node at (0,2) [draw,fill,circle,scale=0.6,label=above:$t(X')$]{};
		\node at (2,0) [draw,fill=white,circle,scale=0.6,label=east:$s(Y)$]{};
\end{tikzpicture}
\end{center}

	As before if $s(Y)$ is a circle, then there is a map $Y \rightarrow X'$  which is neither mono nor epi. Likewise, if $s(Y)$ is a square, then Lemma \ref{lem:epis} implies that there is a map $X' \rightarrow S$, a contradiction. We conclude that $X' \notin \S_p$.
	
	Now suppose there exists $X \in \S_n$ such that $X = \mu^+_{S,\X}(X[1])$, so there is an exact sequence $X \hookrightarrow S \twoheadrightarrow X'$. Thus as before, $l(S) > l(X')$ and either $s(S) = s(X')$ or $t(S) = t(X')$. If $t(S) = t(X')$, then as before, there is a map $Y'\rightarrow S$, a contradiction. Thus we have $s(S) = s(X')$. Since there is an exact sequence $Y' \hookrightarrow Y \twoheadrightarrow S$, we also know $S$ shares an endpoint with $Y'$. If $s(S) = t(Y')$, then either $t(S) = t(X')$ or $t(Y') <_n t(X') <_n t(S)$ (the latter is only possible if $s(X') = t(Y')$). In either case, we see that $l(Y) = l(Y') + l(S) > n$, contradicting the fact that $Y$ is a brick. Thus we have $t(S) = s(Y')$ , contradicting the fact that $l(S) > l(X')$. This shows there does not exist $X \in \S_n$ such that $X' = \mu_{S,\X}(X[1])$.
	
	The last possibility is that there exists $X \in \S_p$ such that $X' = \mu^+_{S,\X}(X)$, so as before there is an exact sequence $S \hookrightarrow X' \twoheadrightarrow X$. This means $l(S) < l(X')$ and either $s(S) = s(X')$ or $t(S) = t(X')$. Recall that as before $S$ must share an endpoint with $Y'$ as well.
	
	First, suppose $t(Y') = s(X') = s(S)$. Since there is a map $S \rightarrow X'$, Lemma \ref{lem:monos} implies $t(S)$ is a square (and hence is in the interval $(s(S), s(Y'))_n$), as is shown below where $X$ is the dashed red arc and $Y$ is the dotted black arc.
	
\begin{center}
\begin{tikzpicture}[scale=0.9]
	\draw (0,0) circle[radius=2cm];
	\draw (-.1,-.1) to (.1,.1);
	\draw (-.1,.1) to (.1,-.1);
		\draw[very thick,blue] plot [smooth] coordinates{(1.41,1.41)(-1,1)(-1.41,-1.41)};
		\draw[very thick,orange] plot [smooth] coordinates{(-1.41,-1.41)(0.5,0)(0,2)};
		\draw[very thick, red,dashed] plot [smooth] coordinates {(1.41,-1.41)(1,1)(0,2)};
		\draw[very thick, green] plot [smooth] coordinates {(-1.41,-1.41)(1.41,-1.41)};
		\draw[very thick,dotted] plot [smooth] coordinates {(1.41,1.41)(-.5,.5)(-.5,-.5)(1.41,-1.41)};
		\node at (1.41,1.41) [draw,fill,circle,scale=0.6,label=north east:$s(Y')$]{};
		\node at (-1.41,-1.41) [draw,fill=white,circle,scale=0.6,label=south west:$s(S)$]{};
		\node at (0,2) [draw,fill,circle,scale=0.6,label=above:$t(X')$]{};
		\node at (1.41,-1.41) [draw,fill,rectangle,scale=0.7,label=east:$t(S)$]{};
\end{tikzpicture}
\end{center}

	We observe that by Lemma \ref{lem:notmonoepi} there is a morphism $Y \rightarrow X$ which is neither mono nor epi, a contradiction. If $s(S) = s(X')$ and $t(S) = s(Y')$, then the picture is as below, where $Y$ is the dotted green arc and $S$ is the dashed red arc.
	
\begin{center}
\begin{tikzpicture}[scale=0.9]
	\draw (0,0) circle[radius=2cm];
	\draw (-.1,-.1) to (.1,.1);
	\draw (-.1,.1) to (.1,-.1);
		\draw[very thick,blue] plot [smooth] coordinates{(1.41,1.41)(-1,0)(-1.41,-1.41)};
		\draw[very thick,orange] plot [smooth] coordinates{(0,-2)(1,0)(0,2)};
		\draw[very thick, red,dashed] plot [smooth] coordinates {(1.41,1.41)(1.5,0)(0,-2)};
		\draw[very thick, green,dotted] plot [smooth] coordinates {(-1.41,-1.41)(0,0.5)(0.5,0)(0,-2)};
		\node at (1.41,1.41) [draw,fill,circle,scale=0.6,label=north east:$s(Y')$]{};
		\node at (0,-2) [draw, fill=white,circle,scale=0.6,label=below:$s(X')$]{};
		\node at (-1.41,-1.41) [draw,fill=white,circle,scale=0.6,label=south west:$t(Y')$]{};
		\node at (0,2) [draw,fill,circle,scale=0.6,label=above:$t(X')$]{};
\end{tikzpicture}
\end{center}

We now know that $t(X') <_n t(Y') \leq_n s(X') <_n s(Y')$ since $S$ is a brick, but this is impossible since $[s(X'),t(Y')]_n$ contains a relation. It follows that $t(S) = t(X')$ and $s(S) = t(Y')$, and as before there is a map $Y' \rightarrow S$, a contradiction. We conclude that there cannot exist $X \in \S_p$ such that $X' = \mu^+_{S,\X}(X)$. This finishes the proof of Claim 2.
	
	Together, Claim 1 and Claim 2 imply that the semibrick pair $\X'$ must be singly left mutation compatible. By Proposition \ref{prop:pairwise2} and Corollary \ref{cor:nakayamahomdims}, this completes the proof.
\end{proof}

\section{The Classification for Gentle Algebras}\label{sec:gentleclassification}

The goal of this section is to prove the central theorem of this paper (Theorem C in the introduction). In particular, this disproves Conjecture 1.11 from \cite{hanson_tau}.

\begin{thm}\label{thm:gentleclassification}
	Let $\Lambda = KQ/I$ be a $\tau$-tilting finite gentle algebra such that $Q$ contains no loops or 2-cycles. Then $\Lambda$ has the pairwise 2-simple minded compatibility property if and only if every vertex of $Q$ has degree at most 2.
\end{thm}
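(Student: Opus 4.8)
We argue the two directions separately.

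\textbf{The ``if'' direction.} First, the pairwise $2$-simple minded compatibility property is preserved under products: if $\Lambda=\Lambda_1\times\Lambda_2$, then $\mods\Lambda=\mods\Lambda_1\times\mods\Lambda_2$ and $\Db(\mods\Lambda)=\Db(\mods\Lambda_1)\times\Db(\mods\Lambda_2)$, so every brick, semibrick pair, and $2$-simple minded collection of $\Lambda$ splits along the two factors; a semibrick pair $\X=\X_1\sqcup\X_2$ is completable if and only if each $\X_i$ is, and a pair $S\sqcup T[1]$ with $S$ over $\Lambda_1$ and $T$ over $\Lambda_2$ is always completable (complete $\{S\}$ in $\Lambda_1$ and $\{T[1]\}$ in $\Lambda_2$ by Theorem~\ref{thm:semibrickcompletable} and take the union). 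We may thus assume $\Lambda=KQ/I$ is connected. If every vertex of $Q$ has degree at most $2$, then the underlying graph of $Q$ is a path or a cycle, so $Q$ is an orientation of $A_n$ or of $\Delta_n$; since $\Lambda$ is gentle, $I$ is generated by the length-two paths of a gentle presentation, hence by monomials, and $I$ is admissible. When $Q$ is an orientation of $\Delta_n$, Theorem~\ref{thm:finitegentle} forces the cycle to contain a relation, so $I\neq(0)$ and $\Lambda\not\cong K\widetilde A_n$. In all cases $\Lambda$ is a Nakayama-like algebra, and the conclusion follows from Theorem~\ref{thm:cyclichaveproperty}.

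\textbf{The ``only if'' direction.} Suppose $Q$ has a vertex $x$ of degree at least $3$. Replacing $\Lambda$ by $\Lambda^{\mathrm{op}}$ if necessary --- still a $\tau$-tilting finite gentle algebra with no loops or $2$-cycles, with the pairwise property if and only if $\Lambda$ has it, via the duality $\smc\Lambda\cong\smc\Lambda^{\mathrm{op}}$ --- we may assume $x$ receives two arrows $a\colon u\to x$, $a'\colon u'\to x$ and emits at least one arrow $b\colon x\to v$, with $u,u',v,x$ pairwise distinct; the gentle conditions then force exactly one of $ab,a'b$ to lie in $I$, say $a'b\in I$ and $ab\notin I$ (this also applies when $x$ has degree $4$, using only $a,a',b$). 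Let $N$ be the uniserial string module of the string $ab$ (top $S_u$, middle $S_x$, socle $S_v$), let $M$ be the string module of $a'$ (top $S_{u'}$, socle $S_x$), and set $\X=\{S_u,\ M\}\sqcup\{N[1]\}$. The plan is to verify: (i) $\X$ is a semibrick pair --- $N,M$ are bricks, $\{S_u,M\}$ is a semibrick, and $\Hom(S_u,N)=\Hom(M,N)=\Ext(S_u,N)=\Ext(M,N)=0$ --- all read off from the string description of $\Hom$ and $\Ext$; (ii) each pair $S_u\sqcup N[1]$ and $M\sqcup N[1]$ is completable, because $g^+_{S_u,N}\colon N\twoheadrightarrow S_u$ is an epimorphism with brick kernel and $g^+_{M,N}=0$, so one left mutation yields a shifted semibrick, completable by Theorem~\ref{thm:semibrickcompletable}, whence by Theorem~\ref{thm:mutationcomplete} the pairs are completable; and (iii) $\X$ is \emph{not} completable: it is singly left mutation compatible, but both mutations $\mu^+_{S_u,\X}(\X)$ and $\mu^+_{M,\X}(\X)$ are semibrick pairs with a one-element positive part that fail to be singly left mutation compatible, the obstruction in each case being a morphism between two bricks --- forced by the three arrows meeting at $x$ --- that is neither mono nor epi, so neither mutation is mutation compatible and hence neither is $\X$. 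By Theorem~\ref{thm:mutationcomplete}, (iii) gives that $\X$ is not completable, so $\X$ witnesses the failure of the pairwise property.

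The concrete model is the cluster-tilted algebra of type $A_4$ whose quiver is a $3$-cycle with a pendant vertex: there $N=\begin{smallmatrix}1\\2\\3\end{smallmatrix}$, $M=\begin{smallmatrix}4\\2\end{smallmatrix}$, $S_u=S_1$, the two mutations are $\mu^+_{S_1,\X}(\X)=\{\begin{smallmatrix}4\\2\end{smallmatrix}\}\sqcup\{S_1,\ \begin{smallmatrix}2\\3\end{smallmatrix}\}[1]$ and $\mu^+_{M,\X}(\X)=\{\begin{smallmatrix}1 & & 4\\ & 2 & \end{smallmatrix}\}\sqcup\{\begin{smallmatrix}4\\2\end{smallmatrix},\ \begin{smallmatrix}1\\2\\3\end{smallmatrix}\}[1]$, and the obstructing maps are $\begin{smallmatrix}2\\3\end{smallmatrix}\twoheadrightarrow S_2\hookrightarrow\begin{smallmatrix}4\\2\end{smallmatrix}$ and $\begin{smallmatrix}1\\2\\3\end{smallmatrix}\twoheadrightarrow\begin{smallmatrix}1\\2\end{smallmatrix}\hookrightarrow\begin{smallmatrix}1 & & 4\\ & 2 & \end{smallmatrix}$.

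The main obstacle is step (iii): computing the two left mutations explicitly from the string combinatorics of gentle algebras and confirming that the obstructing maps are genuinely neither mono nor epi, so that the recursion in Definition~\ref{def:mutcompat} terminates in failure. A secondary point is making step (i) go through for every local configuration at $x$ allowed by the local-picture classification, which may force adjusting the choice of bricks (for instance replacing $S_u$) when $u$ carries further arrows.
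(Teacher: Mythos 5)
Your overall strategy is the paper's: the ``if'' direction reduces to a connected quiver, identifies $\Lambda$ as Nakayama-like and cites Theorem \ref{thm:cyclichaveproperty}, and the ``only if'' direction exhibits an explicit three-element semibrick pair at a degree-$3$ vertex and concludes via Theorem \ref{thm:mutationcomplete} and Corollary \ref{cor:mutationcompatiblepassdown}. Two choices differ only mildly: you dispose of the $1$-in-$2$-out configuration by passing to $\Lambda^{\mathrm{op}}$ (the paper instead writes a second explicit collection for its quiver $Q_2$), which is acceptable but does require justifying that the pairwise property is invariant under the duality on semibrick pairs and $2$-simple minded collections; and you propose to check both mutations of $\X$, although by Corollary \ref{cor:mutationcompatiblepassdown} one failing mutation suffices.

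The genuine gap is in the choice of $\X$ itself. With $a\colon u\to x$, $a'\colon u'\to x$, $b\colon x\to v$ and $a'b\in I$, nothing in the hypotheses forbids an additional arrow $u'\to v$, and this configuration occurs: take $Q$ with arrows $1\to2$, $4\to2$, $2\to3$, $4\to3$ and $I$ generated by the path $4\to2\to3$. This algebra is gentle, has no loops or $2$-cycles, and is representation finite (its unique cycle contains the relation), hence $\tau$-tilting finite by Theorem \ref{thm:finitegentle}. There the string $1\to2\to3\leftarrow4\to2$ gives a nonsplit exact sequence $\begin{smallmatrix}1\\2\\3\end{smallmatrix}\hookrightarrow E\twoheadrightarrow\begin{smallmatrix}4\\2\end{smallmatrix}$, so $\Ext(M,N)\neq0$ and your $\X=\{S_u,M\}\sqcup\{N[1]\}$ is not a semibrick pair at all. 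Your closing caveat misdiagnoses this: the problem is not extra arrows at $u$, and replacing $S_u$ does not help; the needed repair (as in the paper) is to replace $N$ by the string module on $1\to2\to3\leftarrow4$ when the arrow $4\to3$ is present, and then to redo all of (i)--(iii) for that collection, which is a substantive part of the argument rather than a cosmetic adjustment. Relatedly, even in the generic case your step (i) is not ``read off'' from string combinatorics alone: ruling out, say, an arrow $u\to v$ (which would give $\Ext(S_u,N)\neq0$) uses $\tau$-tilting finiteness, either via a band argument or via the Jasso reduction the paper employs. Finally, steps (i)--(iii) are announced rather than carried out --- you yourself call the mutation computation ``the main obstacle'' --- so as written the proposal is a plan whose crucial verifications, including the one that fails in the configuration above, remain open.
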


\begin{proof}
	Let $\Lambda = KQ/I$ be $\tau$-tilting finite gentle algebra such that $Q$ contains no loops or 2-cycles. We can assume without loss of generality that $\Lambda$ is connected.
	
	 If the degree of every vertex of $Q$ is at most 2, then $Q \in \{A_n,\Delta_n\}$ and $\Lambda$ is a Nakayama-like algebra. Therefore, by Theorem \ref{thm:cyclicpairwise}, $\Lambda$ has the pairwise 2-simple minded compatibility property.
	
	Thus suppose $Q$ has a vertex of degree at least 3. Since $\Lambda$ is $\tau$-tilting finite, $Q$ contains no multiple edges. This means $Q$ contains one of the following as a subquiver.
	\begin{center}
		\begin{tikzpicture}
			\node at (0,0.75) {1};
			\node at (0,-0.75) {4};
			\node at (1,0) {2};
			\node at (2,0) {3};
			\draw [->] (0.2,0.6)--(0.8,0.1) node[near end,anchor=south] {$\gamma_1$};
			\draw [->] (0.2,-0.6)--(0.8,-0.1) node[near start,anchor=south] {$\gamma_4$};
			\draw [->] (1.2,0)--(1.8,0) node[midway,anchor=south] {$\gamma_2$};
			\draw[thick,dotted] plot [smooth] coordinates{(0.5,-0.5) (1,-0.35) (1.5,-0.1)};
			\node at (1,-1) {$Q_1$};

		\begin{scope}[shift={(5,0)}]
			\node at (0,0.75) {1};
			\node at (0,-0.75) {4};
			\node at (1,0) {2};
			\node at (2,0) {3};
			\draw [<-] (0.2,0.6)--(0.8,0.1);
			\draw [->] (0.2,-0.6)--(0.8,-0.1);
			\draw [->] (1.2,0)--(1.8,0);
			\draw[thick,dotted] plot [smooth] coordinates{(0.5,-0.5) (1,-0.35) (1.5,-0.1)};
			\node at (1,-1) {$Q_2$};
		\end{scope}
		\end{tikzpicture}
	\end{center}
	We will show that if $Q$ contains $Q_1$ as a subquiver, then it does not have the 2-simple minded compatibility property. The argument for $Q_2$ is similar, using $\X = \scriptsize{\begin{matrix}4\\2\\1\end{matrix}} \sqcup 1[1] \sqcup \scriptsize{\begin{matrix}2\\3\end{matrix}}[1]$ if there is no arrow $4 \rightarrow 3$ and $\X = \scriptsize{\begin{matrix}4\\2\\1\end{matrix}} \sqcup 1[1] \sqcup \scriptsize{\begin{matrix}42\\3\end{matrix}}[1]$ if there is an arrow $4\rightarrow 3$.
	
	Suppose $Q$ contains $Q_1$ as a subquiver. If $Q$ does not contain an arrow $4 \rightarrow 3$, let $\X = 1\sqcup \scriptsize{\begin{matrix}4\\2\end{matrix}}\sqcup \scriptsize{\begin{matrix}1\\2\\3\end{matrix}}[1]$. We claim that $\X$ is a singly left mutation compatible semibrick pair and that each subset of $\X$ of size 2 is a mutation compatible semibrick pair.
	
	 Indeed, it is clear that 1 and $\scriptsize{\begin{matrix}4\\2\end{matrix}}$ are Hom orthogonal, that $\scriptsize{\begin{matrix}4\\2\end{matrix}}$ and $\scriptsize{\begin{matrix}1\\2\\3\end{matrix}}$ are Hom orthogonal, and that $\Hom\left(1, \scriptsize{\begin{matrix}1\\2\\3\end{matrix}}\right) = 0$.
	
	To see that $\Ext\left(1,\scriptsize{\begin{matrix}1\\2\\3\end{matrix}}\right) = 0$, it is enough to consider the wide subcategory $\Filt\left(1\sqcup\scriptsize{\begin{matrix}2\\3\end{matrix}}\right)$, which by a result of Jasso (see \cite{jasso_reduction}, \cite[Sec. 4]{demonet_lattice}) is equivalent $\mods\Lambda'$ for some algebra $\Lambda'$ with two simple objects. Let $Q'$ be the quiver of $\Lambda'$. Clearly, $Q'$ contains no loop at $1$, as then $Q$ would as well. Likewise, $Q'$ only contains a single arrow $1 \rightarrow \scriptsize{\begin{matrix}2\\3\end{matrix}}$, otherwise $\Lambda'$ (and hence $\Lambda$) would be $\tau$-tilting infinite. We conclude that $\Ext\left(1,\scriptsize{\begin{matrix}1\\2\\3\end{matrix}}\right) = 0$. Moreover, this also shows that $\dim\Hom\left(\scriptsize{\begin{matrix}1\\2\\3\end{matrix}},1\right) = 1$ and the map $\scriptsize{\begin{matrix}1\\2\\3\end{matrix}} \twoheadrightarrow 1$ is a left minimal $(\Filt 1)$-approximation.
	
	Now suppose $\Ext\left(\scriptsize{\begin{matrix}4\\2\end{matrix}},\scriptsize{\begin{matrix}1\\2\\3\end{matrix}}\right) \neq 0$, so there is a non-split exact sequence
	$$\scriptsize{\begin{matrix}1\\2\\3\end{matrix}} \hookrightarrow E \twoheadrightarrow \scriptsize{\begin{matrix}4\\2\end{matrix}}.$$
	It follows from \cite[Lem. 4.26]{demonet_lattice} that $E$ a brick (and hence is indecomposable). Now, since $\gamma_4\gamma_2$ is a relation, there must be an arrow with source in $\{4,2\}$ and target in $\{1,3\}$. As we have excluded the existence of an arrow $4\rightarrow 3$, the only possibility is that there exists an arrow $4\xrightarrow{\alpha} 1$. However, if this arrow exists then $\alpha\gamma_1$ must be a relation, or else $\Lambda$ would be $\tau$-tilting infinite. We conclude that $\Ext\left(\scriptsize{\begin{matrix}4\\2\end{matrix}},\scriptsize{\begin{matrix}1\\2\\3\end{matrix}}\right) = 0$.
	
	We have shown that $\X$ is a singly left mutation compatible semibrick pair. Moreover, we have that $1 \sqcup \scriptsize{\begin{matrix}4\\2\end{matrix}}$ is mutation compatible since it is a semibrick, $\scriptsize{\begin{matrix}4\\2\end{matrix}} \sqcup \scriptsize{\begin{matrix}1\\2\\3\end{matrix}}[1]$ is mutation compatible since mutating at $\scriptsize{\begin{matrix}4\\2\end{matrix}}$ yields $\scriptsize{\begin{matrix}4\\2\end{matrix}}[1] \sqcup \scriptsize{\begin{matrix}1\\2\\3\end{matrix}}[1]$, and $1 \sqcup \scriptsize{\begin{matrix}1\\2\\3\end{matrix}}[1]$ is mutation compatible since mutating at $1$ yields $1[1] \sqcup \scriptsize{\begin{matrix}2\\3\end{matrix}}[1]$.
	
	Now consider the semibrick pair 
	$$\mu^+_{1,\X}(\X) = \begin{cases}\scriptsize{\begin{matrix}4\\2\end{matrix}} \sqcup 1[1] \sqcup \scriptsize{\begin{matrix}2\\3\end{matrix}}[1] & \textnormal{if there is no arrow }4\rightarrow1\\
		\scriptsize{\begin{matrix}4\\21\end{matrix}} \sqcup 1[1] \sqcup \scriptsize{\begin{matrix}2\\3\end{matrix}}[1] & \textnormal{if there is an arrow }4\rightarrow1
		\end{cases}$$
	We observe that in the first case, the minimal $\left(\Filt \scriptsize{\begin{matrix}4\\2\end{matrix}}\right)$-approximation $\scriptsize{\begin{matrix}2\\3\end{matrix}}\rightarrow \scriptsize{\begin{matrix}4\\2\end{matrix}}$ is neither mono nor epi, and hence $\mu^+_{1,\X}(\X)$ is not singly left mutation compatible. Thus $\Lambda$ does not have the pairwise 2-simple minded compatibility property by Corollary \ref{cor:mutationcompatiblepassdown} The second case is nearly identical.
	
	
	Now suppose that $Q$ does contain an arrow $4\xrightarrow{\alpha} 3$ and let $\X = 1, \scriptsize{\begin{matrix}4\\2\end{matrix}}, \scriptsize{\begin{matrix}1\phantom{1}\\2 4\\3\end{matrix}}[1]$. By arguments analogous to before, we then have that $\X$ is a singly left mutation compatible semibrick pair, each subset of $\X$ of size 2 is a mutation compatible semibrick pair, and $\mu^+_{1,\X}(\X)$ is not singly left mutation compatible.
\end{proof}

\begin{rem}\label{rem:maximal not complete}\
\begin{enumerate}
	\item The counterexamples appearing in the proof of Theorem \ref{thm:gentleclassification} also show that there exist maximal semibrick pairs (in the sense that they are not direct summands of any larger semibrick pairs) which are \emph{not} 2-simple minded collections.
	\item The number of objects in a 2-simple-minded collection is the number of (isoclasses of) simple modules (see \cite[Cor. 5.5]{koenig_silting}). It remains an open question whether there exist ``maximal but not completable'' semibrick pairs with at least this many objects. This will be partially answered in forthcoming work by Emily Barnard and the first author.
\end{enumerate}
\end{rem}

\begin{rem}\label{rem:2cycles}
	It is less clear what the statement of Theorem \ref{thm:gentleclassification} should be if we allow $Q$ to contain loops or 2-cycles. For example, consider the quivers
	\begin{center}
		\begin{tikzpicture}
			\node at (0,0) [anchor=center] {$1 \leftrightarrows 2 \leftrightarrows 3$};
			\node at (0,-0.5) {$Q_1$};
			\node at (3,0) [anchor=center] {$1 \leftrightarrows 2 \leftrightarrows 3 \leftarrow 4$};
			\node at (3,-0.5) {$Q_2$};
		\end{tikzpicture}
	\end{center}
and the ideals $I_1$ and $I_2$ generated by all 2-cycles. We can verify directly that $KQ_1/I_1$ has the 2-simple minded compatibility property. Moreover, we can see that $KQ_2/I_2$ does not using the counterexample $\X = 4 \sqcup \scriptsize{\begin{matrix}2\\3\end{matrix}} \sqcup \scriptsize{\begin{matrix}4\\3\\2\\1\end{matrix}}[1].$ However, both of these algebras are $\tau$-tilting finite and gentle.
\end{rem}


\section{Application: Eilenberg-MacLane Spaces for Picture Groups}\label{sec:application}

We begin this section by recalling the definition of the picture group of $\Lambda$. We then give a brief reminder of the relationship between the picture group and the so called \emph{$\tau$-cluster morphism category} of $\Lambda$. This relationship is of particular interest when $\Lambda$ has the 2-simple minded pairwise compatibility property.


\subsection{Torsion Lattices and Picture Groups}\label{sec:picturegroups}

We first give a brief overview of the lattice structure of the poset $\tors\Lambda$.

\begin{thm}\label{thm:polygons}\
\begin{enumerate}[label=\upshape(\alph*)]
	\item \cite[Prop 1.3]{iyama_lattice} The poset $\tors\Lambda$ is a lattice. That is:
		\begin{itemize}
			\item For every $\T,\T' \in \tors\Lambda$, there exists a unique torsion class $\T \wedge \T'$ such that $\T \wedge \T' \subset \T''$ whenever $\T,\T' \subset \T''$. The torsion class $\T \wedge \T'$ is called the \emph{join} of $\T$ and $\T'$.
			\item For every $\T,\T' \in \tors\Lambda$, there exists a unique torsion class $\T \vee \T'$ such that $\T'' \subset \T \vee \T'$ whenever $\T'' \subset \T,\T'$. The torsion class $\T \vee \T'$ is called the \emph{meet} of $\T$ and $\T'$.
			\end{itemize}
	\item \cite[Thm. 2.2.6]{barnard_minimal} Let $\T \rightarrow \T'$ be an arrow in $\Hasse(\tors\Lambda)$ (that is, $\T' \subsetneq \T$ and there does not exist $\T''$ such that $\T' \subsetneq \T'' \subsetneq \T$). Then there exists a unique brick $S \in \T\setminus\T'$ such that $\T = \Filt(\T \cup \{S\})$, called the \emph{brick label} of the arrow.
	\item \cite[Thm. 4.16]{demonet_lattice} The lattice $\tors\Lambda$ is \emph{polygonal}. That is,
		\begin{itemize}
			\item Given arrows $\T \rightarrow \T_1$ and $\T \rightarrow \T_2$ in $\Hasse(\tors\Lambda)$, the interval $[\T_1\wedge \T_2,\T]$ is a polygon (i.e., the interval $(\T_1\wedge \T_2,\T)$ consists of two disjoint, nonempty strings).
			\item Given arrows $\T_1 \rightarrow \T$ and $\T_2 \rightarrow \T$ in $\Hasse(\tors\Lambda)$, the interval $[\T,\T_1\vee \T_2]$ is a polygon (i.e., the interval $(\T,\T_1\vee \T_2)$ consists of two disjoint, nonempty strings).
		\end{itemize}
	\item \cite[Thm 2.12]{asai_semibricks} Let $\T \in \tors\Lambda$. Let $\S_p$ be the set of all bricks labeling arrows of the form $\T \rightarrow \T'$ in $\Hasse(\tors\Lambda)$ and let $\S_n$ be the set of all bricks labeling arrows of the form $\T' \rightarrow \T$ in $\Hasse(\tors\Lambda)$. Then $\S_p \sqcup \S_n[1]$ is a 2-simple minded collection. Moreover, all 2-simple minded collections occur in this way.
	\item \cite[Prop 4.21]{demonet_lattice} Every polygon in $\Hasse(\tors\Lambda)$ is of the form shown below,
\begin{center}
\begin{tikzpicture}
	\begin{scope}[thick,decoration={
	markings,
	mark=at position 0.5 with {\arrow[scale=1.5]{>}}}
	]
	\draw[postaction={decorate}] (0,-0.5)--(-1.5,-1.5) node [pos=0.65,anchor=south east]{$S_1$};
	\draw[postaction={decorate}] (0,-0.5)--(1.5,-1.5) node [pos=0.65,anchor=south west]{$S_2$};
	\draw[postaction={decorate}] (-1.5,-1.5)--(-1.5,-2.5) node [midway,anchor=east]{$T_1$};
	\draw[postaction={decorate}] (1.5,-1.5)--(1.5,-2.5) node [midway,anchor=west]{$T'_1$};
	\draw[postaction={decorate}] (-1.5,-3)--(-1.5,-4) node [midway,anchor=east]{$T_k$};
	\draw[postaction={decorate}] (1.5,-3)--(1.5,-4) node [midway,anchor=west]{$T'_l$};
	\draw[postaction={decorate}] (-1.5,-4)--(0,-5) node [pos=0.4,anchor=north east]{$S_2$};
	\draw[postaction={decorate}] (1.5,-4)--(0,-5) node [pos=0.4,anchor=north west]{$S_1$};
	\end{scope}
	
	
	\node at (0,-0.5) [draw,fill,circle,scale=0.6]{};
	\node at (0,-5) [draw,fill,circle,scale=0.6]{};
	
	\node at (-1.5,-1.5) [draw,fill,circle,scale=0.6]{};
	\node at (1.5,-1.5) [draw,fill,circle,scale=0.6]{};
	\node at (-1.5,-4) [draw,fill,circle,scale=0.6]{};
	\node at (1.5,-4) [draw,fill,circle,scale=0.6]{};
	
	\node at (-1.5,-2.7)[rotate=90]{\small $\cdots$};
	\node at (1.5,-2.7)[rotate=90]{\small $\cdots$};
\end{tikzpicture}
\end{center}
where $S_1\sqcup S_2 \in \sbrick\Lambda$, $\Filt(S_1\sqcup S_2)$ is a wide subcategory, and $\{S_1,S_2,T_1,\ldots,T_k,T'_1,\ldots,T'_l\}$ is a nonrepeating set of all (isoclasses of) bricks in $\Filt(S_1\sqcup S_2)$\footnote{It is possible that $k=l=0$, that is $S_1$ and $S_2$ are the only bricks in $\Filt(S_1\sqcup S_2)$.}. Moreover, every semibrick of size 2 defines a polygon in $\tors\Lambda$ in this way.
\end{enumerate}
\end{thm}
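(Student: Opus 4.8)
The plan is to observe that Theorem~\ref{thm:polygons} is not a new result but an assembly of facts already in the literature, so that the ``proof'' amounts to attributing each item to its source and reconciling conventions. Part (a) is \cite[Prop.~1.3]{iyama_lattice}; part (b) is \cite[Thm.~2.2.6]{barnard_minimal}; part (c) is \cite[Thm.~4.16]{demonet_lattice}; part (d) is \cite[Thm.~2.12]{asai_semibricks}; and part (e) is \cite[Prop.~4.21]{demonet_lattice}. Each of these sources works at exactly our level of generality ($\tau$-tilting finite, equivalently $\tors\Lambda$ finite), so no strengthening is required.

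First I would record parts (a) and (c) essentially verbatim, flagging that we write $\wedge$ for the least upper bound (``join'') and $\vee$ for the greatest lower bound (``meet'') in $\tors\Lambda$, which is opposite to the usual lattice-theoretic notation and hence to the symbols used in \cite{iyama_lattice,demonet_lattice}. For part (b) I would recall the definition of the brick label of a Hasse arrow $\T \to \T'$ from \cite{barnard_minimal} and check that it coincides with the characterization $\T = \Filt(\T' \cup \{S\})$ stated here. For part (d) I would transcribe \cite[Thm.~2.12]{asai_semibricks}, noting that the Hom- and Ext-orthogonality conditions in the definition of a $2$-simple minded collection come for free from the brick-labelling theory. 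For part (e) I would combine \cite[Prop.~4.21]{demonet_lattice} with (b) to identify the two bricks bounding a polygon with the labels $S_1, S_2$ of its top pair of Hasse arrows (and, with the roles of $S_1$ and $S_2$ reversed, of its bottom pair), and to see that $\Filt(S_1 \sqcup S_2)$ is precisely the wide subcategory attached to the interval. The closing assertion of (e), that every size-two semibrick arises from such a polygon, I would deduce from the semibrick/torsion-class bijection already quoted (\cite[Prop.~3.2.5, Cor.~3.2.7]{barnard_minimal}) together with (c).

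The main obstacle, such as it is, is pure bookkeeping rather than mathematics: the cited sources differ in the orientation of $\Hasse(\tors\Lambda)$ and in the $\wedge$-versus-$\vee$ convention, and one must be careful that $\S_p$, taken as the set of bricks labelling arrows $\T \to \T'$ in part (d), genuinely produces the positive part (rather than the shifted part) of the corresponding $2$-simple minded collection under our conventions. Once these translations are pinned down, the five cited statements are mutually consistent and say exactly what is claimed.
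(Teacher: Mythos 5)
Your proposal matches the paper exactly: Theorem \ref{thm:polygons} is stated purely as a compilation of cited results ((a) from \cite{iyama_lattice}, (b) from \cite{barnard_minimal}, (c) and (e) from \cite{demonet_lattice}, (d) from \cite{asai_semibricks}), and the paper supplies no argument beyond these attributions, so attributing each part to its source and reconciling conventions is precisely the intended ``proof.'' One caveat on your bookkeeping: the apparent swap of $\wedge$/$\vee$ with join/meet is a slip in the wording of part (a) rather than a deliberate convention --- part (c) and later uses (e.g.\ paths $\T \rightarrow \T\wedge\T'$ in $\Hasse(\tors\Lambda)$) treat $\wedge$ as the usual meet (greatest lower bound) --- so you should not carry the ``opposite convention'' reading into part (c), since otherwise the intervals $[\T_1\wedge\T_2,\T]$ and $[\T,\T_1\vee\T_2]$ would collapse and could not be polygons.
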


In the above picture, we refer to $(S_1,T_1,\ldots,T_k,S_2)$ and $(S_2,T'_1,\ldots,T'_l,S_1)$ as the \emph{sides} of the polygon $\P(S_1,S_2)$. This leads to the definition of the picture group of $\Lambda$, first given for representation finite hereditary algebras by the second author, Todorov, and Weyman in \cite{igusa_picture}.

\begin{define}\cite[Def-Thm. 4.3, Prop. 4.4]{hanson_tau}
	The \emph{picture group} of $\Lambda$, denoted $G(\Lambda)$, is the group with the following (equivalent) presentations.
	\begin{enumerate}[label=\upshape(\alph*)]
		\item $G(\Lambda)$ has one generator $X_S$ for every brick $S \in \brick\Lambda$. For every polygon $\P(S_1,S_2)$ in $\tors\Lambda$, we have the polygon relation
		$$X_{S_1}X_{T_1}\cdots X_{T_k}X_{S_2} = X_{S_2}X_{T'_1}\cdots X_{T'_l}X_{S_1}$$
		where $(S_1,T_1,\ldots T_k,S_2)$ and $(S_2,T'_1,\ldots T'_l,S_1)$ the sides of $\P(S_1,S_2)$.
		\item $G(\Lambda)$ has one generator $X_S$ for every brick $S \in \brick\Lambda$ and one generator $g_\T$ for every torsion class $\T \in \tors\Lambda$. There is a relation $$g_0 = e$$ and for every arrow $\T \xrightarrow{S} \T'$ in $\Hasse(\tors\Lambda)$, there is a relation $$g_{\T} = X_S g_{\T'}$$
	\end{enumerate}
\end{define}

An interesting problem is to compute the cohomology of picture groups. In many cases, this can be done by computing the cohomology of a topological space which has the picture group as its fundamental group (see \cite[Sec. 3-5]{igusa_picture}, \cite[Sec. 4]{igusa_signed}). This topological space is the classifying space of a small category known as the \emph{$\tau$-cluster morphism category} of $\Lambda$, defined by Buan and Marsh in \cite{buan_category} to generalize a construction of the second author and Todorov in \cite{igusa_signed}. 

In order to make this paper self-contained, we give the definition of the $\tau$-cluster morphism category below. Readers interested in more information about this category and information about its classifying space are referred to \cite{buan_category}, \cite{buan_exceptional}, \cite[Sec. 3]{igusa_picture}, \cite{igusa_signed}, and \cite[Sec. 2]{hanson_tau}.

\begin{define}\label{def:category}\cite[Sec. 1]{buan_category}
	The $\tau$-cluster morphism category of $\Lambda$, denoted $\W(\Lambda)$ is defined as follows:
	\begin{enumerate}
		\item The objects of $\W(\Lambda)$ are the wide subcategories of $\mods\Lambda$; i.e., $\mathcal{O}b(\W(\Lambda)) = \wide\Lambda$.
		\item Let $W, W' \in \mods\Lambda$. Then
		$$\Hom_{\W(\Lambda)}(W,W') := \{M\sqcup P[1] \in \str W| (M\sqcup P)^\perp \cap \lperp(\tau M) = W'\}.$$
		\item Let $M_1\sqcup P_1[1] \in \Hom_{\W}(W,W')$ and $M_2\sqcup P_2[1] \in \Hom_{\W}(W',W'')$. Then $(M_2\sqcup P_2[1])\circ (M_1\sqcup P_1[1]) = N\sqcup Q[1]$ is the unique element of $\str W$ satisfying:
		\begin{enumerate}
			\item $M_1 \sqcup P_1[1]$ is a direct summand of $N\sqcup Q[1]$.
			\item $W'' = (N\sqcup Q)^\perp \cap \lperp(\tau N)$.
			\item $\Fac M_2 = \Fac N \cap W'$, where $\Fac M_2$ is computed in the category $W'$ and $\Fac N$ is computed in the category $W$.
		\end{enumerate}
	\end{enumerate}
\end{define}

We denote by $\B\W(\Lambda)$ the classifying space of the $\tau$-cluster morphism category of $\Lambda$. This is a simplial complex whose vertices correspond to objects of the category, whose 1-simplices correspond to non-identity morphisms, and whose $k$-simplices correspond to sequences of $k$ composable morphisms. A more formal definition can be found in \cite[Sec. 3.4]{igusa_signed}. We recall that a topological space $X$ is a called an \emph{Eilenberg-MacLane space}, or $K(\pi,1)$, for the group $\pi$ if the cohomology (with arbitrary coefficients) of $X$ is isomorphic to the cohomology of $\pi$. We use only the following facts about $\B\W(\Lambda)$.

\begin{thm}\label{thm:whenKpi1}\
	\begin{enumerate}[label=\upshape(\alph*)]
		\item \cite[Thm. 4.8]{hanson_tau} The fundamental group of $\B\W(\Lambda)$ is isomorphic to the picture group of $\Lambda$. That is, $\pi_1(\B\W(\Lambda)) \cong G(\Lambda)$.
		\item \cite[Prop 3.4, Prop 3.7]{igusa_category} \cite[Thm 2.12, Lem. 2.14]{hanson_tau} Suppose there is a faithful group functor $\W(\Lambda) \rightarrow G$ for some group $G$, considered as a groupoid with one object. If $\Lambda$ has the pairwise 2-simple minded compatibility property, then $\B\W(\Lambda)$ is a $K(G(\Lambda),1)$.
	\end{enumerate}
\end{thm}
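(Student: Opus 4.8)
The plan for \emph{part (a)} is to match the standard presentation of the fundamental group of the classifying space of a small category against presentation (b) of $G(\Lambda)$. For any small category $\Cat$, $\pi_1(\B\Cat)$ based at an object $c_0$ has one generator for each non-identity morphism and, after choosing a spanning tree of the underlying graph, relations $\widehat{g}\,\widehat{f}=\widehat{g\circ f}$ for composable pairs, $\widehat{1_x}=e$, and triviality of the tree edges; moreover the $2$-skeleton already computes $\pi_1$. I would apply this with $\Cat=\W(\Lambda)$ and $c_0=\mods\Lambda$, which is connected to every object since, $\Lambda$ being $\tau$-tilting finite, every wide subcategory arises as a $\tau$-perpendicular category $(M\sqcup P)^\perp\cap\lperp{(\tau M)}$. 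One then organizes the data along $\Hasse(\tors\Lambda)$: the rank-one morphisms of $\W(\Lambda)$ correspond to the arrows of $\Hasse(\tors\Lambda)$ and carry the brick labels of Theorem \ref{thm:polygons}(b), supplying the generators $X_S$; the auxiliary generators $g_\T$ and the relations $g_0=e$, $g_\T=X_S g_{\T'}$ come from the tree and from composition along it; and, since $\tors\Lambda$ is polygonal (Theorem \ref{thm:polygons}(c),(e)), the two sides of a polygon $\P(S_1,S_2)$ are the two ways of composing rank-one morphisms around it, so the composition relations specialize to the polygon relations of presentation (b). This yields $\pi_1(\B\W(\Lambda))\cong G(\Lambda)$; the routine checks are that higher simplices add no relations and that the relevant subgraph of $\Hasse(\tors\Lambda)$ can be taken spanning.

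For \emph{part (b)}, given a faithful group functor $F\colon\W(\Lambda)\to G$, I would pull back the universal bundle $EG\to BG$ along the induced map $\B F$ to obtain a covering $p\colon E_F\to\B\W(\Lambda)$; concretely $E_F$ is the realization of the simplicial set whose $k$-simplices are pairs $(W_0\to\cdots\to W_k,\,g)$ with $g\in G$ and faces twisted by $F$. Faithfulness of $F$ guarantees that $E_F$ is an honest simplicial complex and that $p$ restricts to a homeomorphism on the closed star of each vertex. The reduction is: it suffices to prove $E_F$ is contractible, for then $E_F$ is the universal cover of $\B\W(\Lambda)$, the composite $G(\Lambda)=\pi_1(\B\W(\Lambda))\to G$ is injective, and $\B\W(\Lambda)$ is aspherical, hence a $K(G(\Lambda),1)$ by part (a).

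To prove $E_F$ is contractible I would equip $\B\W(\Lambda)$ with a nonpositively curved metric --- replacing it by a homotopy-equivalent cube complex (the picture space of $\Lambda$, built from cubes) --- and lift the metric along $p$. By the Cartan--Hadamard theorem it then suffices to check that $E_F$ is simply connected and locally $\CAT$, i.e.\ that the link of every vertex is flag. This is exactly where the pairwise $2$-simple minded compatibility property enters: the link of the vertex $W\in\wide\Lambda$ is controlled by which semibrick pairs of $W$ are completable, and Definition \ref{def:pairwise} says precisely that completability is detected by the rank-two sub-pairs, so a missing simplex in the link would be a non-completable semibrick pair all of whose $2$-element subsets are completable --- the pairwise property is thus equivalent to flagness of all vertex links. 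Simple connectivity of $E_F$ I would obtain from its description as the cover associated to $\ker(\pi_1\to G)$ together with faithfulness, pushing any loop that lifts to a loop into stars on which $p$ is a homeomorphism.

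The hardest part will be the middle step of the previous paragraph: making rigorous the dictionary ``pairwise $2$-simple minded compatibility $\iff$ every vertex link of $\B\W(\Lambda)$ is flag (hence $\CAT$ for the cubical metric).'' This demands an explicit combinatorial model for the vertex links --- say in terms of semibrick pairs of $W$, or of order complexes of intervals in $\tors\Lambda$ --- and a careful use of Gromov's link condition to pass from flagness to local nonpositive curvature; the precise cubulation of $\B\W(\Lambda)$ and the non-circular verification that $E_F$ is simply connected are the other delicate points. The rest --- the covering-space reduction and the Cartan--Hadamard theorem --- is formal.
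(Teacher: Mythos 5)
A preliminary remark: the paper does not prove Theorem \ref{thm:whenKpi1} at all --- both parts are imported from \cite{hanson_tau} and \cite{igusa_category} --- so your proposal can only be measured against those cited arguments. For part (b) your overall architecture is indeed theirs (a cubical structure on $\B\W(\Lambda)$, faithfulness of the group functor to guarantee embedded cubes, the pairwise compatibility property to give the flag/link condition, Gromov's criterion plus Cartan--Hadamard), but your reduction contains a genuine logical gap. You propose to show that $E_F$, the pullback of $EG \to BG$, is contractible, and to obtain its simple connectivity ``from its description as the cover associated to $\ker(\pi_1 \to G)$ together with faithfulness, pushing loops into stars.'' A component of $E_F$ is precisely the cover of $\B\W(\Lambda)$ corresponding to $\ker\bigl(\pi_1(\B\W(\Lambda)) \to G\bigr)$, so its simple connectivity is \emph{equivalent} to injectivity of $\pi_1(\B\W(\Lambda)) \to G$ --- a global statement that is neither a hypothesis nor obtainable by the local argument you sketch: a nontrivial element of the kernel lifts to an honest loop in $E_F$, and no statement about $p$ being a homeomorphism on closed stars can contract it. The correct route needs much less: faithfulness and the pairwise property are used only to verify that $E_F$ (equivalently, locally, $\B\W(\Lambda)$) is a nonpositively curved cube complex; Cartan--Hadamard is then applied to the \emph{universal} cover of $\B\W(\Lambda)$, which is simply connected by definition, giving contractibility, hence asphericity, hence the $K(G(\Lambda),1)$ statement via part (a). Injectivity of $\pi_1 \to G$ is a by-product of this argument, not an input to it.

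For part (a), the general shape (generators from morphisms of the nerve, relations from $2$-simplices, a spanning-tree collapse, polygonality to reduce to the polygon relations) is reasonable, but the specific dictionary you assert is false as stated. Rank-one morphisms of $\W(\Lambda)$ are indexed by indecomposable support $\tau$-rigid objects of the various wide subcategories and are strictly more numerous than the arrows of $\Hasse(\tors\Lambda)$: already for $\Lambda = K$ there are two morphisms $\mods\Lambda \to 0$ but only one Hasse arrow, and for $KA_2$ there are nine rank-one morphisms versus five Hasse arrows. Likewise the generators $g_\T$ of presentation (b) are indexed by torsion classes, which outnumber the objects of $\W(\Lambda)$ (for $KA_2$: five torsion classes, four wide subcategories), so they cannot simply ``come from the tree'' of the underlying graph of the nerve. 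Matching the edge-path presentation of $\pi_1(\B\W(\Lambda))$ with presentation (b) therefore requires different bookkeeping (for instance, eliminating the redundant $g_\T$, each of which equals a product of generators $X_S$ along a path to $0$ in $\Hasse(\tors\Lambda)$, and relating composable chains of morphisms to such paths); this identification is essentially the content of \cite[Thm.\ 4.8]{hanson_tau} and is not the routine check your outline suggests.
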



\subsection{Faithful Group Functors}\label{sec:faithfulfunctors}

The aim of this section is to construct a faithful group functor $\W(\Lambda) \rightarrow G(\Lambda)$ in as general of a setting as possible. In particular, this will include the case that $\Lambda$ is gentle or Nakayama-like. We do so by showing that such a $\Lambda$ satisfies the hypotheses of the following theorem.

\begin{thm}\cite[Thm. 4.13]{hanson_tau}\label{thm:faithfulFunctor}
	Let $\Lambda$ be a $\tau$-tilting finite algebra such that
		\begin{enumerate}[label=\upshape(\alph*)]
			\item For $S \neq S' \in \brick\Lambda$, we have $e \neq X_S\neq X_{S'} \in G(\Lambda)$.
			\item For $\T \neq \T' \in \tors\Lambda$, we have $g_{\T} \neq g_{\T'} \in G(\Lambda)$.
		\end{enumerate}
	Then there is a faithful functor $F: \W(\Lambda) \rightarrow G(\Lambda)$, where $G(\Lambda)$ is considered as a groupoid with one object.
\end{thm}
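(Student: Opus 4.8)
The plan is to build $F$ from the second presentation of $G(\Lambda)$ (generators $X_S$ for $S\in\brick\Lambda$ and $g_\T$ for $\T\in\tors\Lambda$, with $g_0=e$ and $g_\T=X_Sg_{\T'}$ along each Hasse arrow $\T\xrightarrow{S}\T'$) and then extract faithfulness from hypotheses (a), (b). On objects $F$ is constant at the unique object $\ast$. For a morphism $\phi=M\sqcup P[1]\in\Hom_{\W(\Lambda)}(W,W')$, I would use the $\tau$-tilting reduction dictionary (Jasso \cite{jasso_reduction}, \cite{demonet_lattice}, \cite{buan_category}): $W$ is a length category, $\phi$ determines the functorially finite torsion class $\Fac_W M\in\tors W$, and $\tors W$ is realized as an interval of $\tors\Lambda$. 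Writing $\T_1(\phi)\in\tors\Lambda$ for the image of $\Fac_W M$ and $\T_W^-\in\tors\Lambda$ for the image of $0_W$ (the bottom of this interval, which depends only on $W$), set $F(\phi):=g_{\T_1(\phi)}\,g_{\T_W^-}^{-1}$; equivalently, $F(\phi)$ is the product $X_{S_1}\cdots X_{S_r}$ of the brick labels along any maximal chain of $\Hasse(\tors\Lambda)$ from $\T_1(\phi)$ down to $\T_W^-$.

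The first task is to check this is well defined and functorial. Well-definedness (independence of the maximal chain) is exactly the content of polygonality of $\tors\Lambda$ together with the polygon relations of $G(\Lambda)$: by Theorem \ref{thm:polygons}(c),(e) any two maximal chains with the same endpoints are connected by polygon moves, each replacing one side of a polygon $\P(S_1,S_2)$ by the other, and the words $X_{S_1}X_{T_1}\cdots X_{T_k}X_{S_2}$ and $X_{S_2}X_{T'_1}\cdots X_{T'_l}X_{S_1}$ are equal in $G(\Lambda)$ by definition. Functoriality, $F(\phi\circ\psi)=F(\phi)F(\psi)$, then follows by concatenation: the composition rule of Definition \ref{def:category}(3) is designed so that, under the reduction dictionary, composing morphisms corresponds to stacking the associated torsion-class intervals, and $F(\mathrm{id}_W)=g_{\T_W^-}g_{\T_W^-}^{-1}=e$. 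This step needs neither (a) nor (b).

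For faithfulness it then suffices to show $F$ is injective on each $\Hom_{\W(\Lambda)}(W,W')$. Given $\phi,\psi\colon W\to W'$ with $F(\phi)=F(\psi)$, cancelling the common factor $g_{\T_W^-}^{-1}$ gives $g_{\T_1(\phi)}=g_{\T_1(\psi)}$, hence $\T_1(\phi)=\T_1(\psi)$ by hypothesis (b); since $\T_1(-)$ is the image of an element of $\tors W$ under an order isomorphism, this forces $\Fac_W M_\phi=\Fac_W M_\psi$, hence $M_\phi=M_\psi$ (the Ext-projective generator of that torsion class in $W$), and then $P_\phi=P_\psi$, since the shifted part of a $\tau$-rigid pair with module part $M_\phi$ is pinned down by its $\tau$-perpendicular category $W'$; thus $\phi=\psi$. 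Here hypothesis (a) is what guarantees that the construction does not collapse the generating ``atomic'' morphisms — each such morphism is sent to a single $X_S$, which (a) forces to be nontrivial and to depend injectively on $S$ — and it is also (a) that is ultimately responsible for the construction being independent of the auxiliary choices used to identify $\tors W$ with an interval of $\tors\Lambda$; note that (a) is in any case necessary, since faithfulness of $F$ on the atomic morphisms implies it. The step I expect to be the main obstacle is not this short piece of group theory but the reduction dictionary itself: one must verify carefully that the composition law of $\W(\Lambda)$ is compatible with concatenation of torsion-class intervals (so that $F$ really is a functor and $\T_1(\phi)$, $\T_W^-$ behave as claimed), which is an intricate combination of Jasso's $\tau$-tilting reduction with the brick-labelling and polygonality statements of Theorem \ref{thm:polygons}.
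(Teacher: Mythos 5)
You should first note that the paper does not prove this theorem at all: it is imported verbatim from \cite[Thm.\ 4.13]{hanson_tau}, and the present paper only verifies its hypotheses (via Lemma \ref{lem:faithfulfunctor}) for the algebras of interest. So the comparison is with the strategy of the cited proof, and your overall plan is of the right shape and in the same spirit: define $F$ to be constant on objects and send a morphism to a product of brick labels along a Hasse path (equivalently an expression in the $g_\T$'s), with hypothesis (b) separating the relevant torsion classes and the polygon relations providing well-definedness.

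However, as written there are genuine gaps and two incorrect claims. First, $\T_W^-$ is \emph{not} determined by $W$ alone: the realization of $\tors W$ as an interval of $\tors\Lambda$ depends on a choice of support $\tau$-rigid pair whose $\tau$-perpendicular category is $W$ (for instance $W=0$ arises from every support $\tau$-tilting pair, and the corresponding intervals are distinct singletons), so your formula $F(\phi)=g_{\T_1(\phi)}g_{\T_W^-}^{-1}$ needs a proof that the resulting element of $G(\Lambda)$ is independent of this choice. That independence comes from the label-preserving nature of the reduction isomorphisms (together with Theorem \ref{thm:polygons}(e), which guarantees that a polygon of $\tors W$, given by a size-2 semibrick in $W\subset\mods\Lambda$, imposes a relation that actually holds in $G(\Lambda)$) --- it is not, as you assert, a consequence of hypothesis (a); a cleaner route is to bypass the embedding and define $F(\phi)$ directly as the product of brick labels along a path in $\Hasse(\tors W)$ from $\Fac_W M$ to $0$. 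Second, functoriality --- that the composition law of Definition \ref{def:category}(3) is compatible with concatenation of these label products --- is the technical heart of the cited proof, and you explicitly defer it; without it there is no functor to be faithful. Third, in the faithfulness step the claim that $M_\phi$ is ``the Ext-projective generator'' of $\Fac_W M_\phi$ is false in general: a $\tau$-rigid module is typically a proper direct summand of the Ext-projective generator (the support $\tau$-tilting module) of the torsion class it generates. What you actually need is that a morphism $W\to W'$ is determined by the pair $(\Fac_W M, W')$, i.e.\ by its interval together with its heart, and that requires an argument (e.g.\ that the top of the interval is recovered as $\Filt(\Fac_W M\cup W')$). So the proposal is a plausible outline consistent with the known proof, but the decisive steps --- choice-independence, functoriality, and the recovery of $(M,P)$ from $(\Fac_W M, W')$ --- are missing or misattributed.
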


Our technique for verifying the hypotheses of Theorem \ref{thm:faithfulFunctor} is similar to that in \cite[Sec. 4.3]{hanson_tau} used for Nakayama algebras. That is, we proceed by mapping $G(\Lambda)$ to another group where we know the generators are nontrivial. As the \emph{brick algebra} defined in \cite{hanson_tau} is in general not associative, we instead map $G(\Lambda)$ to the group of units of the \emph{power series 0-Hall algebra} of $\Lambda$, defined below in Definition \ref{def:hallalg}. We begin by recalling the definition of the Hall polynomials of $\Lambda$.

\begin{defthm}\cite[Thm. 1]{ringel_hall}
	Write $\Lambda = KQ/I$ where $Q$ is a quiver (or more generally, a species) and $I$ is an admissible ideal. We identify $\mods\Lambda$ with $\mods K'Q/I$ for any field $K'$ by identifying the Auslander-Reiten quivers of $\Lambda$ and $K'Q/I$. For $M, N, E\in \mods\Lambda$, let $\psi_{M,N}^E(q)$ be the number of submodules $N' \subset E$ such that $N' \cong N$ and $E/N' \cong M$ when $K' = \mathbb{F}_q$, the finite field with $q$ elements. Then the function $\psi_{M,N}^E$ is polynomial in $q$. The polynomials $\psi_{M,N}^E$ are called the \emph{Hall polynomials} of $\Lambda$.
\end{defthm}

We are now ready to define our algebra.

\begin{define}\label{def:hallalg}
	The \emph{power series 0-Hall algebra} of $\Lambda$, denoted $\H(\Lambda)$, is defined as follows.
	\begin{itemize}
	\item The elements of $\H(\Lambda)$ are formal power series over $\Z$ of isoclasses of $\mods\Lambda$. That is, elements are of the form $\displaystyle \sum_{[M] \in [\mods\Lambda]}c_{[M]}\cdot [M]$, where $[\mods\Lambda]$ is a skeleton of $\mods\Lambda$ and the coefficients $c_{[M]}$ are integers.
	\item Multiplication is defined by $\displaystyle [M] * [N] = \sum_{[E] \in [\mods\Lambda]} \psi_{M,N}^E(0) \cdot [E]$.
	\end{itemize}
	This is an associative algebra with multiplicative identity $[0]$.
\end{define}

In order to construct a morphism $G(\Lambda) \rightarrow \H^*(\Lambda)$, we will need the following.

\begin{lem}\label{lem:hallresults}
	Let $S, T \in \brick\Lambda$.
	\begin{enumerate}[label=\upshape(\alph*)]
		\item If $l(S) + l(T) \neq l(E)$, then the coefficient of $[E]$ in $[S]*[T]$ is 0.
		\item If $\Hom(T,S) = 0$, then the coefficient of $[E]$ in $[S]*[T]$ is 1 if and only if there is an exact sequence $T \hookrightarrow E \twoheadrightarrow S$.
		\item If $\Hom(T,S) \neq 0$, then the coefficient of $[S\oplus T]$ in $[S]*[T]$ is 0.
	\end{enumerate}
\end{lem}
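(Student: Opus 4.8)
The plan is to compute the relevant Hall numbers $\psi_{S,T}^E(q)$ by thinking of submodules $T' \subset E$ with $T' \cong T$ and $E/T' \cong S$, then specializing to $q=0$. Throughout I will use Proposition \ref{prop:extdim1}, which tells us that $\dim\Ext(T,S) \leq 1$ and classifies every non-split extension $S \hookrightarrow E \twoheadrightarrow S$ (equivalently every $E$ appearing nontrivially) as one of the explicit modules in Lemmas \ref{lem:indecomposableExts}, \ref{lem:monos}, \ref{lem:epis}, \ref{lem:notmonoepi}; in particular such an $E$ has at most two indecomposable summands, and $E \cong S \oplus T$ exactly captures the split case. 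I would also record the general formula $\psi_{S,T}^E(q) = |\{T' \subset E : T' \cong T,\ E/T' \cong S\}|$ over $\mathbb{F}_q$.

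First, for (a): a submodule $T' \subset E$ with $E/T' \cong S$ forces an exact sequence $0 \to T' \to E \to S \to 0$ with $T' \cong T$, hence $l(E) \leq l(S) + l(T)$, and moreover (since length is additive on exact sequences for these modules, or just using dimension vectors) $l(E) = l(S) + l(T)$ whenever such $T'$ exists; so if $l(S)+l(T) \neq l(E)$ there are no such $T'$ at all, the counting polynomial is identically zero, and its value at $0$ is $0$. Second, for (b): assume $\Hom(T,S)=0$. If there is no exact sequence $T \hookrightarrow E \twoheadrightarrow S$ then the coefficient is trivially $0 \neq 1$, so assume one exists. Since $\Hom(T,S) = 0$, a short exact sequence $0 \to T \to E \to S \to 0$ cannot be split unless $E \cong S \oplus T$ with $\Hom(T,S)=0$; by Proposition \ref{prop:extdim1}(b), $\dim\Ext(T,S) \leq 1$, so up to isomorphism there is at most one non-split $E$, and the set of submodules $T' \subset E$ with $T' \cong T$, $E/T' \cong S$ is in bijection with $\Ext^1(T,S)$-classes realized inside $E$ modulo $\Aut$ — more concretely, because $\Hom(T,S)=0$ the automorphism group of the extension acts and the number of such $T'$ over $\mathbb{F}_q$ is a monic polynomial with constant term $1$ (this is the standard fact that for a non-split extension with $\Hom(T,S)=0$ and $\End(T)=\End(S)=K$, the Hall number $\psi_{S,T}^E(q)$ has $\psi_{S,T}^E(0)=1$). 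Evaluating at $0$ gives coefficient $1$. Conversely if $E \cong S \oplus T$ the relevant count is handled in (c).

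For (c): assume $\Hom(T,S) \neq 0$. Then by Lemmas \ref{lem:monos}, \ref{lem:epis}, \ref{lem:notmonoepi} there is a nonzero, hence (as $S,T$ are bricks with $\End \cong K$) we analyze submodules $T' \subset S \oplus T$ with $T' \cong T$ and $(S\oplus T)/T' \cong S$. The submodule $T' = 0 \oplus T$ always works, but so do the "graph" submodules $\{(f(t), t) : t \in T\}$ for each $f \in \Hom(T,S)$, giving $|\Hom(T,S)|$ distinct submodules over $\mathbb{F}_q$, i.e. $\psi_{S,T}^{S\oplus T}(q) = |\Hom(T,S)| = q^{d}$ where $d = \dim_K \Hom(T,S) \geq 1$ (using that $\Hom(T,S)$ is $1$-dimensional in the mono/epi cases of Lemmas \ref{lem:monos}, \ref{lem:epis} by Corollary \ref{cor:nakayamahomdims}, though here it suffices that $d \geq 1$). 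Since $d \geq 1$, the polynomial $q^d$ vanishes at $q = 0$, so the coefficient of $[S \oplus T]$ in $[S]*[T]$ is $0$. (One should check no other submodule $T' \not\cong T \oplus 0$-type contributes; since any $T' \subset S \oplus T$ with $T' \cong T$ and quotient $\cong S$ must project isomorphically onto the $T$-summand — otherwise the quotient is too large — it is exactly a graph, so the count is complete.)

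The main obstacle I anticipate is being precise about the Hall-number computation in part (b): I need the clean statement that for $\Hom(T,S)=0$, a fixed non-split $E$, and $\End(S)=\End(T)=K$, the number of submodules $T' \subset E$ with $T'\cong T$, $E/T'\cong S$ is a polynomial in $q$ with constant term exactly $1$. This is classical (the relevant Hall polynomial is monic-ish with the "generic" extension contributing $1$ at $q=0$), but in the write-up I would either cite the appropriate Hall-algebra reference or give the short direct argument that such $T'$ correspond bijectively to splittings of the pullback/pushout data, counted by an affine space over $\mathbb{F}_q$ whose dimension drop forces constant term $1$; combined with the classification in Proposition \ref{prop:extdim1} that pins down $E$ uniquely, this finishes it. Parts (a) and (c) are routine by comparison.
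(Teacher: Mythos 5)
Your parts (a) and (c) are essentially the paper's argument (length additivity; graph submodules $\{(g(x),x)\}$ for $g\in\Hom(T,S)$ giving $\psi_{S,T}^{S\oplus T}(q)=q^{\dim\Hom(T,S)}$, which vanishes at $q=0$), though your justification that every relevant $T'\subset S\oplus T$ is a graph is off: the issue is not that "the quotient is too large" (lengths always match), but that if $T'$ projects to $0$ in the $T$-summand then $T'\subseteq S\oplus 0$ and the quotient is $(S/T')\oplus T$, which by Krull--Schmidt cannot be isomorphic to the indecomposable $S$ unless $S\cong T$ --- the same indecomposability argument the paper uses, and the same implicit assumption $S\ncong T$.

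The genuine gap is in (b). The entire content of (b) is the claim that for a fixed $E$ admitting an exact sequence $T\hookrightarrow E\twoheadrightarrow S$ with $\Hom(T,S)=0$, one has $\psi_{S,T}^E(0)=1$; you assert this as a ``standard fact'' (under extra hypotheses $\End(S)\cong\End(T)\cong K$ and non-splitness, neither of which is in the lemma), and you yourself flag it as the unresolved obstacle, so nothing is actually proved. Moreover your deferral of the split case $E\cong S\oplus T$ to part (c) is inconsistent, since (c) assumes $\Hom(T,S)\neq0$ while (b) assumes $\Hom(T,S)=0$. The short direct argument you were missing: fix the surjection $\pi\colon E\twoheadrightarrow S$ with kernel $T$; any submodule $T'\subseteq E$ with $T'\cong T$ has $\pi|_{T'}\in\Hom(T,S)=0$, so $T'\subseteq T$, and then $T'=T$ (by length, or because $T$ is a brick). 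Hence $\psi_{S,T}^E(q)\equiv 1$ identically --- no genericity, no monicity, no $\dim\Ext(T,S)\le 1$, no $\End\cong K$, and it handles split and non-split $E$ uniformly. Relatedly, your appeals to Proposition \ref{prop:extdim1}, Lemmas \ref{lem:monos}, \ref{lem:epis}, \ref{lem:notmonoepi} and Corollary \ref{cor:nakayamahomdims} are out of scope: those are proved only for Nakayama-like algebras in Section \ref{sec:cyclics}, whereas Lemma \ref{lem:hallresults} is stated for an arbitrary ($\tau$-tilting finite) algebra and is applied, e.g., to gentle algebras in Proposition \ref{prop:stones}; fortunately none of them is needed.
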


\begin{proof}
	(a) This is clear since $E$ cannot possibly contain a submodule $T'$ such that $T' \cong T$ and $E/T' \cong S$ unless this equality holds.
	
	(b) Suppose there exists an exact sequence $T \hookrightarrow E \twoheadrightarrow S$. Now let $T' \subset E$ such that $T' \cong T$. As $\Hom(T', S) = 0$, it follows that $T' \subset T$. Since $T$ is a brick, this implies $T' = T$. We conclude that $\psi_{S,T}^E = 1$ and hence the coefficient of $[E]$ in $[S]*[T]$ is 1. The converse follows directly from the definition of the Hall polynomials.
	
	(c) Choose an exact sequence $T\xrightarrow{f}S\oplus T \twoheadrightarrow S$. Since $S$ is indecomposable, it follows that $f_2:= pr_2\circ f$ is a nonzero morphism $T \rightarrow T$. It is clear that for every $g \in \Hom(T,S)$, the morphism $T\rightarrow S\oplus T$ given by $x \mapsto (g(x),x)$ defines a distinct submodule $T' \subset S\oplus T$ with $T' \cong T$ and $(S\oplus T)/T' \cong S$. We claim these are the only such submodules. Indeed, let $g \in \Hom(T,S)$ and $0 \neq h \in \End(T)$. Since $T$ is a brick, $h$ is invertible. Thus the image of the morphism $T \rightarrow S\oplus T$ given by $x \mapsto (g(x),h(x))$ is the same as that given by $x \mapsto (g(h\inv(x)),x)$. This shows that $\psi_{S,T}^{S\oplus T}(q) = q^{\dim\Hom(T,S)} \not\equiv 1$. We conclude that the coefficient of $[S\oplus T]$ in $[S]*[T]$ is 0.
\end{proof}

We now propose our group homomorphism.

\begin{thm}\label{thm:knownpolys}
	Assume that for all $S\sqcup T \in \sbrick\Lambda$ we have one of the following.
	\begin{itemize}
		\item $S$ and $T$ are both $K$-stones. That is, $\Ext(S,S) = 0 = \Ext(T,T)$ and $\End(S) \cong K \cong \End(T)$.
		\item $S$ and $T$ are Ext orthogonal.
	\end{itemize}
	Then there is a group homomorphism $\phi_\Lambda: G(\Lambda) \hookrightarrow \H^*(\Lambda)$ given by $\phi_\Lambda(X_S) = (1-[S])\inv$.
\end{thm}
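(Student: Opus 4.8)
The plan is to define $\phi_\Lambda$ on the generators $X_S$ as stated, check that the defining relations of $G(\Lambda)$ are preserved, and then address injectivity. First, $1-[S]$ is a unit of $\H(\Lambda)$: since $\H(\Lambda)$ is graded by dimension vector and $[S]^{*k}$ is homogeneous of degree $k\cdot\dim S$, for each isoclass $[E]$ at most one $k$ contributes to the coefficient of $[E]$ in $\sum_{k\geq 0}[S]^{*k}$ (in the spirit of Lemma \ref{lem:hallresults}(a)), so this is a well-defined element of $\H(\Lambda)$, and a telescoping computation gives $(1-[S])*\sum_{k\geq 0}[S]^{*k}=[0]$. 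Hence $\phi_\Lambda(X_S)=(1-[S])\inv\in\H^*(\Lambda)$ for every brick $S$.

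To check the relations I would use presentation (a) of $G(\Lambda)$: it suffices to show that for every polygon $\P(S_1,S_2)$ in $\tors\Lambda$, with sides $(S_1,T_1,\dots,T_k,S_2)$ and $(S_2,T_1',\dots,T_l',S_1)$ as in Theorem \ref{thm:polygons}(e), one has
\[(1-[S_1])\inv*(1-[T_1])\inv*\cdots*(1-[T_k])\inv*(1-[S_2])\inv \;=\; (1-[S_2])\inv*(1-[T_1'])\inv*\cdots*(1-[T_l'])\inv*(1-[S_1])\inv\]
in $\H(\Lambda)$. Every brick appearing here lies in the wide subcategory $W:=\Filt(S_1\sqcup S_2)$, and the relevant Hall numbers $\psi^E_{M,N}(0)$ with $M,N,E\in W$ depend only on $W$ up to equivalence; by Jasso reduction (\cite{jasso_reduction}, \cite[Sec. 4]{demonet_lattice}) I may replace $W$ by $\mods\Lambda'$ for a $\tau$-tilting finite algebra $\Lambda'$ with exactly two simple modules, so that the displayed identity becomes the polygon relation of $\Lambda'$ inside $\H(\Lambda')$. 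The hypothesis, applied to the semibrick $\{S_1,S_2\}$, severely restricts $\Lambda'$: either both simples of $\Lambda'$ are $K$-stones (so its quiver has no loops and, by $\tau$-tilting finiteness, little room for extra arrows, giving a short list including $K\times K$ and $KA_2$), or $S_1$ and $S_2$ are Ext orthogonal, in which case $W\cong\Filt(S_1)\times\Filt(S_2)$ with each $\Filt(S_i)$ the module category of a uniserial Nakayama algebra.

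It then remains to verify the polygon identity in each of these rank-$2$ cases. The key input is the sublemma that $(1-[S])\inv=\sum_{[M]\in[\Filt S]}[M]$ is the characteristic function of $\Filt S$ in $\H(\Lambda)$ for any brick $S$ --- immediate when $S$ is a $K$-stone, since then $\Filt S=\add S$ and $[S]^{*k}=[S^k]$ at $q=0$, and otherwise a short computation with the classically known Hall polynomials of a uniserial Nakayama algebra (cf. \cite[Sec. 4.3]{hanson_tau}, \cite{ringel_hall}) --- together with the fact, essentially Lemma \ref{lem:hallresults}(b)--(c), that Hom--Ext orthogonal objects multiply to their direct sum with coefficient $1$. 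In the semisimple/product cases both sides collapse to $\sum_{[M]\in[W]}[M]$, the polygon is a square, and the relation is just that $\phi_\Lambda(X_{S_1})$ and $\phi_\Lambda(X_{S_2})$ commute. In the $KA_2$ case the polygon is a pentagon and the identity is the quantum pentagon (Faddeev--Kashaev / Reineke) identity $(1-[S_1])\inv*(1-[S_2])\inv=(1-[S_2])\inv*(1-[P])\inv*(1-[S_1])\inv$, with $P$ the unique length-$2$ brick; this is checked by a direct computation in $\H(KA_2)$ in which both sides again equal $\sum_{[M]\in[\mods KA_2]}[M]$, crucially using that $\psi^{P}_{S_1,S_2}\equiv 1$ and that the relevant $\Hom$-groups vanish, so that the $K$-stone hypothesis prevents any spurious self-extension terms. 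Finally, for injectivity the elements $\phi_\Lambda(X_S)=(1-[S])\inv$ are pairwise distinct and $\neq[0]$ (inspect the coefficient of $[S]$); passing along a maximal chain $\T_0\supsetneq\cdots\supsetneq\T_N$ in $\Hasse(\tors\Lambda)$ and using $\sum_{[M]\in[\T_{i-1}]}[M]=(1-[B_i])\inv*\sum_{[M]\in[\T_i]}[M]$ for a Hasse arrow $\T_{i-1}\xrightarrow{B_i}\T_i$ (another instance of the characteristic-function identity), one sees that $\phi_\Lambda(g_\T)$ determines $\T$ and recovers enough word structure to conclude $\phi_\Lambda$ is injective.

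The step I expect to be the main obstacle is the rank-$2$ verification of the third paragraph: everything before it is structural, whereas showing that the two products along the two sides of a polygon agree is exactly the characteristic-function and pentagon identities, whose proof is a careful accounting of Hall numbers at $q=0$. The two features that make this go through --- and that the hypothesis is engineered to supply --- are that $W$ is forced to be one of the ``known'' rank-$2$ module categories, and that the ordering of the bricks along a side of the polygon is precisely the torsion-chain order for which the Hall product telescopes.
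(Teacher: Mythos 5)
There is a genuine gap in your case analysis. Your hypothesis-driven "short list" for the reduced rank-two algebra $\Lambda'$ (``$K\times K$ and $KA_2$'') is incomplete, and the cases you actually verify in the third paragraph are only the square (commuting) and pentagon ($KA_2$) polygons. But the hypothesis of the theorem also allows two $K$-stones $S_1, S_2$ with $\Ext(S_1,S_2)\neq 0\neq\Ext(S_2,S_1)$: having no self-extensions and $\End\cong K$ says nothing about mutual extensions, and this situation really occurs for $\tau$-tilting finite algebras whose quiver contains 2-cycles (e.g. the quiver $1\leftrightarrows 2$ with both length-two compositions in the ideal, or the algebra $KQ_1/I_1$ of Remark \ref{rem:2cycles}). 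In that case $\P(S_1,S_2)$ is a hexagon with sides $(S_1,T,S_2)$ and $(S_2,R,S_1)$, where $T$ and $R$ are the two indecomposable extensions, and the corresponding relation $X_{S_1}X_TX_{S_2}=X_{S_2}X_RX_{S_1}$ must be checked in $\H^*(\Lambda)$ separately; neither the commutation identity nor the pentagon identity covers it. Relatedly, your reduction step leans on a classification of two-simple $\tau$-tilting finite algebras that you do not actually have (the paper points out after the proof that no such classification exists in general, which is exactly why Conjecture \ref{conj:faithfulfunctor} is open); even restricted to the $K$-stone situation your list omits the 2-cycle algebras, so the argument as written does not close.

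The paper's proof avoids this trap: it never reduces to a rank-two algebra, but instead quotes \cite[Prop. 4.33]{demonet_lattice}, which classifies the possible polygons $\P(S_1,S_2)$ for a semibrick of two $K$-stones into exactly four shapes (square, two pentagons, and the hexagon above), with the side labels given explicitly by the indecomposable extensions, and then verifies each polygon relation by a direct computation in $\H(\Lambda)$ using only Lemma \ref{lem:hallresults}; the Ext-orthogonal case is the square. If you add the hexagon computation (and justify that your Hall-number transfer along the Jasso equivalence is legitimate, which it is since $W$ is extension-, kernel- and cokernel-closed), your route becomes a viable variant, though the polygon classification makes the reduction unnecessary. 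Finally, your last paragraph proves more than is needed and more than is justified: the paper only establishes that $\phi_\Lambda$ is a homomorphism here, deferring the distinctness statements about $X_S$ and $g_\T$ to Lemma \ref{lem:faithfulfunctor}, and your claim that one ``recovers enough word structure to conclude $\phi_\Lambda$ is injective'' is not substantiated and is not required for the applications.
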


\begin{proof}
	We need only show the proposed map preserves the relations of $G(\Lambda)$. Let $S_1\sqcup S_2 \in \sbrick\Lambda$ and let $\P(S_1\sqcup S_2)$ be the corresponding polygon in $\tors\Lambda$. If $S_1$ and $S_2$ are both $K$-stones, then by \cite[Prop 4.33]{demonet_lattice}, there are only four possibilities for the polygon $\P(S_1,S_2)$.
	\begin{enumerate}
		\item $S_1$ and $S_2$ are Ext orthogonal and the sides of $\P(S_1,S_2)$ are $(S_1,S_2)$ and $(S_2,S_1)$. 
		\item $\Ext(S_1,S_2) = 0 , \Ext(S_2,S_1) \neq 0$, and the sides of $\P(S_1,S_2)$ are $(S_1,T,S_2)$ and $(S_2,S_1)$ where $S_1 \hookrightarrow T \twoheadrightarrow S_2$ is exact.
		\item $\Ext(S_1,S_2) \neq 0 , \Ext(S_2,S_1) = 0$, and the sides of $\P(S_1,S_2)$ are $(S_1,S_2)$ and $(S_2,T,S_1)$ where $S_2 \hookrightarrow T \twoheadrightarrow S_1$ is exact.
		\item $\Ext(S_1,S_2) \neq 0 ,  \Ext(S_2,S_1) \neq 0$, and the sides of $\P(S_1,S_2)$ are $(S_1,T,S_2)$ and $(S_2,R,S_1)$ where $S_1 \hookrightarrow T \twoheadrightarrow S_1$ and $S_2\hookrightarrow R \twoheadrightarrow S_1$ are exact. 	
		\end{enumerate}
	It can be verified in all four cases that the morphism $\phi_\Lambda$ preserves the relation corresponding to $\P(S_1,S_2)$ using Lemma \ref{lem:hallresults}. For example, in case (2), we have
	\begin{eqnarray*}
		\phi_\Lambda(X_{S_2}\inv X_T\inv X_{S_1}\inv) &=& (1-[S_2])*(1-[T])*(1-[S_1])\\
			&=& 1 - [S_1] - [S_2] - [T] + [S_2]*[S_1]\\
			&=& 1 - [S_1] - [S_2] + [S_1\oplus S_2]\\
			&=& 1 - [S_1] - [S_2] + [S_1]*[S_2]\\
			&=& (1-[S_1])*(1-[S_2])\\
			&=& \phi_\Lambda(X_{S_1}\inv X_{S_2}\inv)
		\end{eqnarray*}
		
	Likewise, if $S_1$ and $S_2$ are Ext orthogonal, then $\brick(\Filt(S_1\sqcup S_2)) = \{S_1,S_2\}$ and $\P(S_1,S_2)$ is as in Case (1) above.
	\end{proof}

The assumption that all pairs of Hom-orthogonal bricks are either $K$-stones or are Ext orthogonal is made since the possible polygons which can occur in $\tors\Lambda$ are well understood in these cases. Generalizing this proof would require a classification of all 2-vertex $\tau$-tilting finite algebras, which does not currently exist. We have, however, verified that the theorem still holds for the polygons corresponding to the Dynkin diagrams $B_2 \cong C_2$ and $G_2$, which gives us hope the following conjecture is true.

\begin{conj}\label{conj:faithfulfunctor}
	Let $\Lambda$ be an arbitrary $\tau$-tilting finite algebra. Then there is a group homomorphism $\phi_\Lambda: G(\Lambda) \rightarrow \H^*(\Lambda)$ given by $\phi_\Lambda(X_S) = (1-[S])\inv$.
\end{conj}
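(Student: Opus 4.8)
The plan is to reduce the conjecture first to a statement about individual polygon relations and then, via Jasso's reduction, to algebras with exactly two simple modules, where it becomes the single computation that Theorem \ref{thm:knownpolys} already carries out in the known cases. First note that $\phi_\Lambda$ is a well-defined homomorphism from the free group on $\{X_S : S\in\brick\Lambda\}$ to $\H^*(\Lambda)$: since $S$ is a nonzero module, $[S]$ is supported away from $[0]$ and its convolution powers $[S]^{*n}$ are supported on modules of strictly increasing length, so $1-[S]$ is a unit with inverse $\sum_{n\geq 0}[S]^{*n}$. By the presentation of $G(\Lambda)$ in part (a) of its definition, it then suffices to check that $\phi_\Lambda$ carries every polygon relation to an identity in $\H^*(\Lambda)$.

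Fix a semibrick $S_1\sqcup S_2\in\sbrick\Lambda$ and the associated polygon $\P(S_1,S_2)$, with sides $(S_1=R_0,R_1,\ldots,R_k,R_{k+1}=S_2)$ and $(S_2=R'_0,R'_1,\ldots,R'_l,R'_{l+1}=S_1)$. By Theorem \ref{thm:polygons}(e), the bricks on the two sides are exactly the bricks of the wide subcategory $\W:=\Filt(S_1\sqcup S_2)$. By Jasso's reduction (see \cite{jasso_reduction}, \cite[Sec. 4]{demonet_lattice}), $\W$ is equivalent as an abelian category to $\mods\Lambda'$ for a $\tau$-tilting finite algebra $\Lambda'$ with precisely two simple modules, and this equivalence sends $\P(S_1,S_2)$ to a polygon of $\tors\Lambda'$. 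The relevant Hall numbers are intrinsic to $\W$: if $E\in\W$ and $N'\subseteq E$ with $N'\cong N\in\W$, then $N'\in\W$ and, since wide subcategories are closed under cokernels, $E/N'\in\W$, so $\psi^E_{M,N}(0)$ is the same computed in $\mods\Lambda$ or in $\mods\Lambda'$. Hence $\phi_\Lambda$ respects the polygon relation for $\P(S_1,S_2)$ if and only if $\phi_{\Lambda'}$ respects the corresponding relation, and the conjecture reduces to the following: for every $\tau$-tilting finite algebra $\Lambda'$ with two simple modules, its unique polygon satisfies $(1-[R_0])\inv*\cdots*(1-[R_{k+1}])\inv=(1-[R'_0])\inv*\cdots*(1-[R'_{l+1}])\inv$ in $\H^*(\Lambda')$.

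To prove this identity one would show that both ordered products coincide with a single element of $\H^*(\Lambda')$ built from the bricks of $\W$; in the cases already settled in Theorem \ref{thm:knownpolys} (both bricks $K$-stones, giving one of the four polygon types of \cite[Prop. 4.33]{demonet_lattice}, or the two bricks Ext orthogonal) this common value is read off directly from Lemma \ref{lem:hallresults}, and the authors report the same for the polygons of Dynkin types $B_2$ and $G_2$. A uniform argument would proceed by (i) listing the possible polygons of a two-simple $\tau$-tilting finite algebra together with the Loewy data of the bricks on each side; (ii) computing the needed values $\psi^E_{M,N}(0)$, extending Lemma \ref{lem:hallresults} to bricks with $\dim\Hom(T,S)\geq 2$ or $\Ext(S,S)\neq 0$; and (iii) expanding the two products and matching coefficients, or, more conceptually, exhibiting a step-by-step reordering of one side into the other. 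Since $(1-[S])\inv$ is, informally, the $q=0$ value of the quantum dilogarithm of $[S]$, one hopes a $q=0$ avatar of the quantum-dilogarithm (maximal-green-sequence) identities of Reineke, Keller, and Kontsevich--Soibelman underlies step (iii).

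The principal obstacle is step (i): there is at present no classification of $\tau$-tilting finite algebras with two simple modules, so the case analysis cannot be completed by hand; and a case-free proof would require exactly the reordering principle that step (iii) is asking for, whereas such identities are known in the \emph{generic}-$q$ Hall algebra, with its $\Aut$-weighted structure constants, rather than in the $q=0$ specialization $\H(\Lambda')$, whose structure constants $\psi^E_{M,N}(0)$ behave quite differently. A secondary difficulty is that once a brick has $\dim\Hom(T,S)\geq 2$, or self-extensions for non-Nakayama reasons, the Hall polynomials $\psi^E_{M,N}(q)$ need no longer have constant term in $\{0,1\}$, so the clean dichotomy driving the known computations must be replaced by a more delicate tracking of these coefficients. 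A full resolution thus appears to hinge on either completing the structure theory of two-vertex $\tau$-tilting finite algebras or finding a categorical interpretation of $\phi_\Lambda$ that makes the polygon relations manifest.
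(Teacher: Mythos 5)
You are attempting a statement that the paper records as Conjecture \ref{conj:faithfulfunctor}: the paper does not prove it, and your proposal does not either, so the verdict is that a genuine gap remains --- though it is exactly the gap the paper itself acknowledges. Your preliminary reductions are sound and match the paper's framing: by presentation (a) of $G(\Lambda)$ it suffices to check the polygon relations; $1-[S]$ is a unit of $\H(\Lambda)$ because the powers $[S]^{*n}$ are supported in strictly increasing length; and since any extension of objects of $\mathcal{W}:=\Filt(S_1\sqcup S_2)$ again lies in $\mathcal{W}$, while $\mathcal{W}$ is closed under cokernels and under subobjects isomorphic to its objects, the terms and structure constants $\psi^E_{M,N}(0)$ occurring in the relation for $\P(S_1,S_2)$ are intrinsic to $\mathcal{W}$. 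This is precisely the setting of the paper's partial result, Theorem \ref{thm:knownpolys}, which treats the cases where $S_1,S_2$ are $K$-stones (via the list of four possible polygons in \cite[Prop.\ 4.33]{demonet_lattice}) or Ext orthogonal, supplemented by the authors' ad hoc verification for the $B_2$ and $G_2$ polygons.

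The missing piece is your step (iii) (together with (i) and (ii)): you never establish the identity $(1-[R_0])\inv*\cdots*(1-[R_{k+1}])\inv=(1-[R'_0])\inv*\cdots*(1-[R'_{l+1}])\inv$ for an arbitrary polygon, i.e.\ for an arbitrary $\tau$-tilting finite algebra with two simple modules, and you correctly observe that this cannot currently be completed by case analysis because no classification of such algebras exists --- which is word for word the obstruction the paper states immediately before the conjecture and again in its concluding section. Two further cautions if you pursue this program: once $\dim\Hom(T,S)\geq 2$ or bricks acquire self-extensions, Lemma \ref{lem:hallresults} no longer controls the coefficients, so even the rank-two computation is not routine; and your appeal to Jasso reduction (\cite{jasso_reduction}, \cite[Sec.\ 4]{demonet_lattice}) needs a justification that the $q=0$ Hall numbers computed inside $\mathcal{W}\simeq\mods\Lambda'$ agree with those defined for $\Lambda'$ via its own Hall polynomials, since the paper's Definition-Theorem defines $\psi^E_{M,N}$ by counting over $\mathbb{F}_q$ after identifying module categories over different base fields, and compatibility of that identification with the equivalence $\mathcal{W}\simeq\mods\Lambda'$ is not automatic. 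In short: a correct reduction and an accurate diagnosis of the difficulty, but not a proof --- the statement remains, as in the paper, open.
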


Nevertheless, we have the following.

\begin{prop}\label{prop:stones}\
	\begin{enumerate}
		\item Let $\Lambda$ be a Nakayama-like algebra. Then $\Lambda$ satisfies the hypotheses of Theorem \ref{thm:knownpolys}, and therefore the map $\phi_\Lambda: G(\Lambda) \rightarrow \H(\Lambda)$ is a group homomorphism.
		\item Let $\Lambda$ be a $\tau$-tilting finite gentle algebra such that the quiver of $\Lambda$ has no loops or 2-cycles. Then $\Lambda$ satisfies the hypotheses of Theorem \ref{thm:knownpolys}, and therefore the map $\phi_\Lambda: G(\Lambda) \rightarrow \H(\Lambda)$ is a group homomorphism.
	\end{enumerate}
\end{prop}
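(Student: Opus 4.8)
The aim is to verify the single hypothesis of Theorem~\ref{thm:knownpolys}: for every two-element semibrick $S\sqcup T\in\sbrick\Lambda$, either both $S$ and $T$ are $K$-stones, or $S$ and $T$ are Ext orthogonal. Granting this, both parts of the proposition are immediate. In both settings every brick $S$ satisfies $\End(S)\cong K$ --- by Lemma~\ref{lem:bricks} for Nakayama-like algebras, and by the combinatorial formula for homomorphisms between string modules together with invariance of $\dim_K\End$ under change of base field for gentle algebras --- so a brick fails to be a $K$-stone exactly when it has a nontrivial self-extension. The verification therefore reduces to showing that a brick with a nontrivial self-extension is Ext orthogonal to every brick to which it is Hom orthogonal.

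For part (2) I would establish the stronger fact that \emph{no brick of a $\tau$-tilting finite gentle algebra whose quiver has no loops has a nontrivial self-extension}; every brick is then a $K$-stone and the first alternative of Theorem~\ref{thm:knownpolys} always holds. Let $S=M(w)$ be a string-module brick and suppose $0\to M(w)\to E\to M(w)\to 0$ is non-split. Since $\End(M(w))\cong K$, no decomposition of $E$ can split this sequence, so $E$ is indecomposable, hence a string module $M(u)$ with $\dim_K M(u)=2\dim_K M(w)$. By the description of a basis for extensions between string modules over gentle algebras (\cite{canakci_extensions}, \cite{brustle_combinatorics}), such a class is an arrow extension or an overlap extension of $w$ with itself; comparing dimensions and using indecomposability of $E$, the string $u$ is obtained by gluing two copies of $w$ (one possibly reversed) along a single arrow $a$, or by iterating a nontrivial self-overlap of $w$. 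As $Q$ has no loops, $a$ is a genuine arrow, and then the closed walk $wa$ (up to reversal) is cyclically reduced, i.e.\ is a band; in the overlap case, iterating likewise produces a band or string modules of unbounded dimension. Either way $\Lambda$ is representation infinite, contradicting Theorem~\ref{thm:finitegentle}. Hence there are no self-extending bricks, which proves part (2).

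For part (1) I would argue directly from the disk model of Section~\ref{sec:cyclics}. By Lemma~\ref{lem:bricks}, $S$ and $T$ are both $K$-stones unless one of them, say $S$, has a nontrivial self-extension. In that case Corollary~\ref{cor:nakayamaSelfExt} shows $\Lambda$ is Nakayama, so every marked point of the disk is a circle, and Lemma~\ref{lem:indecomposableExts} forces $l(S)=n$; thus the arc of $S$ winds once around the disk and its support contains every marked point. Since $T\neq S$ and $\Hom(S,T)=0=\Hom(T,S)$, the arc of $T$ cannot also wind all the way around (two Hom-orthogonal full-support bricks are isomorphic), so $\supp(T)\subsetneq\supp(S)$. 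Consequently there is no morphism between $S$ and $T$ in either direction, and Lemma~\ref{lem:indecomposableExts} admits no indecomposable extension between them: such an extension would chain $S$ and $T$ at a common endpoint, forcing $T$ to be a submodule or a quotient module of $S$, against Hom orthogonality. By Proposition~\ref{prop:extdim1}(1) these possibilities exhaust all non-split extensions, so $\Ext(S,T)=0=\Ext(T,S)$; that is, $S$ and $T$ are Ext orthogonal, and the second alternative of Theorem~\ref{thm:knownpolys} holds.

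The step I expect to be the main obstacle is the self-extension statement in part (2) for gentle algebras that are not Nakayama-like (those with a vertex of degree at least $3$): the extension lemmas of Section~\ref{sec:cyclics} do not apply, so identifying the shape of the middle-term string $u$ and extracting the forbidden band must be carried out with the general combinatorics of string-module extensions, paying attention to reversed substrings and to the relations of $I$. The remaining ingredients --- that string-module bricks over gentle algebras have endomorphism ring $K$, and the dimension bookkeeping for $u$ --- should be routine.
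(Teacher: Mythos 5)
Your proposal is correct, and it splits naturally into two comparisons. For part (2) you take essentially the paper's route: show that no brick over a $\tau$-tilting finite gentle algebra (no loops) has a nontrivial self-extension, by forcing the middle term of a putative self-extension to be an indecomposable string module, reading off its shape from the known basis of extensions between string modules, and contradicting representation finiteness via Theorem~\ref{thm:finitegentle}. The paper's execution is slightly cleaner at the two points you flag: indecomposability of $E$ is obtained by noting $E$ has length $2$ in the wide subcategory $\Filt(S)$ (so $E\cong S\oplus S$ would split the sequence since $\End(S)\cong K$), and rather than producing a band it cites \cite[Thm.~8.5]{brustle_combinatorics} to get an arrow $\alpha$ with $S\alpha^{\epsilon}S$ a string and then observes that $S(\alpha^{\epsilon}S)^m$ is a string for every $m$, which sidesteps the overlap-versus-arrow and cyclic-reducedness bookkeeping you identify as the main obstacle. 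For part (1) your argument is genuinely different from the paper's: after the common reduction via Corollary~\ref{cor:nakayamaSelfExt} to the Nakayama case, the paper simply cites \cite[Lem.~3.8]{hanson_tau}, whereas you give a self-contained proof inside the disk model -- a self-extending brick $S$ has $l(S)=n$, any other full-length brick fails Hom-orthogonality with $S$, an indecomposable extension between $S$ and a Hom-orthogonal $T$ would force a shared endpoint and hence $T$ a submodule or quotient of $S$, and Proposition~\ref{prop:extdim1}(1) plus the absence of morphisms rules out decomposable middle terms, so $S$ and $T$ are Ext orthogonal. I checked this chain and it is sound (including the parenthetical that two distinct full-support bricks over a cyclic Nakayama algebra always admit a nonzero morphism); what it buys is independence from the earlier paper, at the cost of a longer argument than the one-line citation, and it only covers the Nakayama case that actually arises, exactly as needed for the hypotheses of Theorem~\ref{thm:knownpolys}.
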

	
\begin{proof}	
	(1) Let $S\sqcup T\in\sbrick\Lambda$. It is clear in this case that the endomorphism rings of $S$ and $T$ must be isomorphic to the field $K$. Thus if neither $S$ nor $T$ have nontrivial self-extensions, we are done. Thus suppose $S \in \brick\Lambda$ has a nontrivial self-extension. This implies that $\Lambda$ is a Nakayama algebra (Corollary \ref{cor:nakayamaSelfExt}). The result then follows immediately from \cite[Lem. 3.8]{hanson_tau}.
	
	(2) Let $S \in \brick\Lambda$ and suppose there is a nonsplit exact sequence $S \hookrightarrow E \twoheadrightarrow S$. We observe that $E$ must be indecomposable since it has length 2 in the wide subcategory $\Filt(S)$. Identifying $S$ with its string, it then follows from \cite[Thm. 8.5]{brustle_combinatorics} and the fact that $Q$ has no loops that there is an arrow $\alpha \in Q_1$ such that $S\alpha^\epsilon S$ is a string for some $\epsilon \in \{\pm 1\}$. It follows that $S(\alpha^\epsilon S)^m$ is a string for all $m > 0$, so $\Lambda$ is representation infinite and hence $\tau$-tilting infinite by Theorem \ref{thm:finitegentle}, a contradiction.
\end{proof}

When the map $\phi_\Lambda: G(\Lambda) \rightarrow \H(\Lambda)$ is a group homomorphism, we can prove the following result about the generators of $G(\Lambda)$.

\begin{lem}\label{lem:faithfulfunctor}
	Suppose the map $\phi_\Lambda$ is a group homomorphism. Then
	\begin{enumerate}[label=\upshape(\alph*)]
		\item For $S \neq S' \in \brick\Lambda$, we have $e \neq X_S \neq X_{S'} \in G(\Lambda)$.
		\item For $\T \neq \T' \in \brick\Lambda$, we have $g_{\T} \neq g_{\T'} \in G(\Lambda)$.
	\end{enumerate}
	In particular, there exists a faithful group functor $\W(\Lambda) \rightarrow G(\Lambda)$ as claimed in Theorem \ref{thm:faithfulFunctor}.
\end{lem}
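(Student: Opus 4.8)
The plan is to exploit that $\phi_\Lambda$ is a group homomorphism by recovering $S$ from $\phi_\Lambda(X_S)$ and $\T$ from $\phi_\Lambda(g_\T)$. Once (a) and (b) are in hand, the final assertion is immediate: (a) and (b) are precisely the hypotheses of Theorem~\ref{thm:faithfulFunctor}, so it supplies the desired faithful functor $\W(\Lambda)\to G(\Lambda)$.

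For (a): expand $\phi_\Lambda(X_S)=(1-[S])\inv=\sum_{n\ge0}[S]^{*n}$. By Lemma~\ref{lem:hallresults}(a), $[S]^{*n}$ is supported on modules of Loewy length $n\cdot l(S)$; moreover, for $n\ge2$, every module in the support of $[S]^{*n}$ admits a filtration with $n$ subquotients all isomorphic to $S$, hence has $S$ as a proper submodule and as a proper quotient, and is therefore not a brick. Consequently $[S]$ appears in $\phi_\Lambda(X_S)$ with coefficient $1$ (only the $n=1$ term contributes), but with coefficient $0$ in $\phi_\Lambda(e)=[0]$ and, for any $S'\in\brick\Lambda$ with $S'\ncong S$, in $\phi_\Lambda(X_{S'})$. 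Since $\phi_\Lambda$ is a homomorphism, this forces $e\ne X_S\ne X_{S'}$.

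For (b): I would first prove that
$$\phi_\Lambda(g_\T)=\sum_{[M]\in\T}[M]\in\H(\Lambda),$$
the sum running over isomorphism classes of modules in $\T$, each occurring with coefficient $1$. Granting this, $\T$ is recovered from $\phi_\Lambda(g_\T)$ as its support, so $\T\ne\T'$ implies $\phi_\Lambda(g_\T)\ne\phi_\Lambda(g_{\T'})$, whence $g_\T\ne g_{\T'}$. The identity is proved by induction on the length of a maximal chain from $\T$ to $0$ in $\tors\Lambda$ (finite since $\Lambda$ is $\tau$-tilting finite), the base case $\T=0$ being clear since $g_0=e\mapsto[0]$. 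For the inductive step, pick an arrow $\T\xrightarrow{S}\T'$ of $\Hasse(\tors\Lambda)$ with brick label $S$, so $\T=\Filt(\T'\cup\{S\})$; presentation (b) of $G(\Lambda)$ gives $g_\T=X_S g_{\T'}$, hence by the inductive hypothesis $\phi_\Lambda(g_\T)=(1-[S])\inv*\sum_{[M]\in\T'}[M]$. Using (i) the computation $(1-[S])\inv=\sum_{[P]\in\Filt(S)}[P]$ (a $q=0$ Hall-number count, valid for any brick $S$) and (ii) the brick-labeling fact that $(\T',\T'^\perp)$ is a torsion pair with $\Filt(S)\subseteq\T'^\perp$ and $\T\cap\T'^\perp=\Filt(S)$ — so in particular $\Hom(\T',\Filt(S))=0$ — one sees that every $N\in\T$ has a unique submodule lying in $\T'$ with quotient in $\Filt(S)$, namely $M=t_{\T'}(N)$. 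The argument of Lemma~\ref{lem:hallresults}(b), applied with quotient factor $P\in\Filt(S)$ and sub factor $M\in\T'$ (for which $\Hom(M,P)=0$), shows $[P]*[M]=\sum_{[E]}[E]$ with the sum over isoclasses $E$ realized as an extension of $P$ by $M$, all coefficients $1$. Therefore the coefficient of $[N]$ in $\sum_{[P]\in\Filt(S),\,[M]\in\T'}[P]*[M]$ counts the pairs $([P],[M])$ exhibiting $N$ as such an extension — exactly one — while no $[N]$ with $N\notin\T$ occurs, $\T$ being extension-closed. This establishes the identity, and hence (b).

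The step I expect to be the main obstacle is the inductive step of (b): it rests on the precise brick-labeling interaction between $S$, the wide subcategory $\Filt(S)$, and the torsion class $\T'$ in item (ii), and on a careful count of $q=0$ Hall numbers ensuring each module of $\T$ is hit exactly once. In particular, the potential overcounting from split extensions $[S'\oplus T']$ with $\Hom(T',S')\ne0$ must cancel, which is exactly what Lemma~\ref{lem:hallresults}(c) (together with (b)) provides.
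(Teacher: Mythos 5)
Your proposal is correct in outline but takes a genuinely different, and considerably heavier, route than the paper. For (a) the paper simply observes that $\phi_\Lambda(X_S\inv)=1-[S]$, and these elements are visibly distinct from each other and from the identity for distinct bricks; your geometric-series expansion of $(1-[S])\inv$ together with the observation that the support of $[S]^{*n}$, $n\ge 2$, contains no bricks is fine, just longer. For (b) the paper never computes $\phi_\Lambda(g_\T)$: assuming $g_\T=g_{\T'}$, it equates the $\phi_\Lambda$-images of the products of brick labels along minimal paths from $\T$ and $\T'$ down to $\T\wedge\T'$, uses Lemma \ref{lem:hallresults} to extract a brick $S$ of minimal length occurring on both sides, and then contradicts the minimality of $\T\wedge\T'$ via $\Filt\Fac(\brick(\T\wedge\T')\sqcup S)$. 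Your closed formula $\phi_\Lambda(g_\T)=\sum_{[M]\in\T}[M]$ is a stronger and rather pretty statement, and your uniqueness argument in the inductive step (the canonical sequence $t_{\T'}(N)\hookrightarrow N\twoheadrightarrow N/t_{\T'}(N)$ is the only way to write $N\in\T$ as an extension of an object of $\T\cap\T'^{\perp}$ by an object of $\T'$, so exactly one pair contributes, with coefficient $1$ by Lemma \ref{lem:hallresults}(b)) is sound. Note, however, that it invokes $\T\cap\T'^{\perp}=\Filt(S)$ for a Hasse arrow with brick label $S$; this is true (Demonet--Iyama--Reading--Reiten--Thomas / Barnard--Carroll--Zhu) but is not among the facts stated in the paper, whose Theorem \ref{thm:polygons}(b) only gives $\T=\Filt(\T'\cup\{S\})$, so it needs a citation.

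The one genuine gap is your ingredient (i), the identity $(1-[S])\inv=\sum_{[P]\in\Filt(S)}[P]$ with every coefficient equal to $1$, which you dismiss as ``a $q=0$ Hall-number count, valid for any brick $S$.'' This is a real lemma, not a formality: when $S$ has self-extensions (which does occur under the hypotheses, e.g.\ for Nakayama algebras) the coefficient of $[E]$ in $[S]^{*n}$ is a flag-counting polynomial whose constant term being $1$ must be proved. It is true, and one can argue by induction on the $S$-length of $E\in\Filt(S)$: writing $[S]^{*n}=[S]*[S]^{*(n-1)}$, the submodules $E'\subset E$ with $E/E'\cong S$ are the kernels of nonzero maps $E\to S$ (any nonzero map is surjective since $S$ admits no proper nonzero subobject in the wide subcategory $\Filt(S)$), and these kernels are in bijection with the points of the projective space of $\Hom(E,S)$ over $\End(S)$, whose cardinality $1+q^d+\cdots$ has constant term $1$; combined with the inductive hypothesis this gives coefficient $1$ at $q=0$. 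With that lemma supplied (and the labeling fact above cited), your argument goes through; without it, the inductive step of (b) is not justified. Minor points: the induction should be phrased on the length of a longest chain from $\T$ to $0$ (the Hasse quiver of $\tors\Lambda$ is not graded), and in Lemma \ref{lem:hallresults}(a) the additive invariant is composition length rather than Loewy length, which is what your support computation actually uses.
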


\begin{proof}
	The proof is similar to that of \cite[Lem. 4.12]{hanson_tau}.
	
	(a) Let $S\neq S' \in \brick\Lambda$. Then it is clear that $\phi_\Lambda(X_S\inv) = 1-[S] \neq 1-[S'] = \phi_\Lambda(X_{S'}\inv)$ and that neither of these are equal to the identity.
	
	(b) Let $\T, \T' \in \tors\Lambda$ and let $(S_1,\ldots,S_k),(S'_1,\ldots,S'_l)$ label minimal length paths $\T \rightarrow \T\wedge\T',\T' \rightarrow \T\wedge \T'$ in $\Hasse(\tors\Lambda)$. Assume that $X_{S_1}\cdots X_{S_k} = X_{S'_1}\cdots X_{S'_l} \in G(\Lambda)$ (and hence $g_\T = g_{\T'}$). If $\max(k,l) > 0$, let $S \in \{S_1,\ldots,S_k,S'_1,\ldots,S'_l\}$ be of minimal length. By applying the morphism $\phi_\Lambda$, we then have
	$$(1-[S_k])*\cdots*(1-[S_1]) = (1-[S'_l])*\cdots*(1-[S'_1]).$$
	By Lemma \ref{lem:hallresults}, this implies that $S$ occurs in both $(S_1,\ldots,S_k)$ and $(S'_1,\ldots,S'_l)$. Thus $\T \wedge \T' \subsetneq \Filt\Fac(\brick(\T\wedge\T')\sqcup S) \subset \T,\T'$, a contradiction. We conclude that $\max(k,l) = 0$ and hence $\T = \T\wedge\T' = \T'$.
\end{proof}

From this along with Theorems \ref{thm:cyclichaveproperty}, \ref{thm:gentleclassification}, \ref{thm:whenKpi1}(b), we conclude our final main result (Theorem D in the introduction).

\begin{cor}\label{cor:kpi1}\
	\begin{enumerate}[label=\upshape(\alph*)]
		\item Let $\Lambda$ be a Nakayama-like algebra. Then the classifying space of the $\tau$-cluster morphism category of $\Lambda$ is a $K(G(\Lambda),1)$.
		\item Let $\Lambda$ be a $\tau$-tilting finite gentle algebra whose quiver contains no loops or 2-cycles. Then the classifying space of the $\tau$-cluster morphism category of $\Lambda$ is a $K(G(\Lambda), 1)$ if the degree of every vertex of $Q$ is at most 2.
	\end{enumerate}
\end{cor}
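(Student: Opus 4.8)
The plan is simply to assemble the pieces established above; no genuinely new argument is needed. I would prove part (a) first and then reduce part (b) to it.

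\emph{Part (a).} Let $\Lambda$ be a Nakayama-like algebra. To apply Theorem~\ref{thm:whenKpi1}(b), two inputs are required. First, by Proposition~\ref{prop:stones}(1) the map $\phi_\Lambda\colon G(\Lambda)\to\H^*(\Lambda)$ sending $X_S\mapsto(1-[S])^{-1}$ is a group homomorphism; hence Lemma~\ref{lem:faithfulfunctor} yields that the generators $X_S$ of $G(\Lambda)$ are pairwise distinct and nontrivial and that the generators $g_\T$ are pairwise distinct, so by Theorem~\ref{thm:faithfulFunctor} there is a faithful group functor $F\colon\W(\Lambda)\to G(\Lambda)$ (with $G(\Lambda)$ regarded as a one-object groupoid). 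Second, by Theorem~\ref{thm:cyclichaveproperty}, $\Lambda$ has the pairwise $2$-simple minded compatibility property. Theorem~\ref{thm:whenKpi1}(b) then gives that $\B\W(\Lambda)$ is a $K(G(\Lambda),1)$.

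\emph{Part (b).} Let $\Lambda=KQ/I$ be $\tau$-tilting finite gentle with $Q$ free of loops and $2$-cycles, and suppose every vertex of $Q$ has degree at most $2$; we may assume $\Lambda$ is connected. Then $Q\in\{A_n,\Delta_n\}$, the ideal $I$ is generated by paths of length two (hence by monomials), and $\Lambda\not\cong K\widetilde{A}_n$ since $\Lambda$ is $\tau$-tilting finite; therefore $\Lambda$ is Nakayama-like, exactly as observed in the proof of Theorem~\ref{thm:gentleclassification}, and the claim follows from part (a). (One could instead run the argument of part (a) directly in the gentle setting, using Theorem~\ref{thm:gentleclassification} in place of Theorem~\ref{thm:cyclichaveproperty} for the pairwise property and Proposition~\ref{prop:stones}(2) in place of Proposition~\ref{prop:stones}(1) for the homomorphism $\phi_\Lambda$; under the degree $\le 2$ hypothesis this is the same statement.)

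\emph{Main obstacle.} All the real content sits in the cited results — the pairwise compatibility property of Theorems~\ref{thm:cyclichaveproperty} and~\ref{thm:gentleclassification}, and the Hall-algebra homomorphism $\phi_\Lambda$ underlying Proposition~\ref{prop:stones} — so the only thing to be careful about here is the bookkeeping of hypotheses for Theorem~\ref{thm:whenKpi1}(b): the faithful functor is obtained \emph{via} $\phi_\Lambda$ and Lemma~\ref{lem:faithfulfunctor} (so that the numbered hypotheses of Theorem~\ref{thm:faithfulFunctor} are genuinely checked, not merely quoted), and this must be combined \emph{simultaneously} with the pairwise compatibility property, since Theorem~\ref{thm:whenKpi1}(b) needs both.
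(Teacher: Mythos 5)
Your proposal is correct and matches the paper's own argument, which likewise just assembles Theorems \ref{thm:cyclichaveproperty} and \ref{thm:gentleclassification} (pairwise compatibility), Proposition \ref{prop:stones} with Lemma \ref{lem:faithfulfunctor} (the faithful functor via $\phi_\Lambda$), and Theorem \ref{thm:whenKpi1}(b). Your reduction of (b) to (a) via the Nakayama-like observation is the same observation already made in the proof of Theorem \ref{thm:gentleclassification}, so there is no substantive difference in route.
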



\section*{Future Work}

The fact that there are $\tau$-tilting finite algebras which do not have the pairwise 2-simple minded compatibility property opens up several interesting questions for research. First and foremost, we would like to better understand what it is that causes an algebra to fail to have this property. For example, consider the algebra $\Lambda = KQ/I$ which is cluster tilted of type $A_4$, shown below.
	\begin{center}
	\begin{tikzpicture}
		\node at (0,0) {1};
		\node at (2,-0.75) {4};
		\node at (1,0) {2};
		\node at (2,0.75) {3};
		\draw [->] (0.2,0)--(0.8,0) node[midway,anchor=south] {};
		\draw [->] (1.8,-0.6)--(1.2,-0.1) node[midway,anchor=north] {};
		\draw [->] (1.2,0.1)--(1.8,0.6) node[near start,anchor=south] {};
		\draw [->] (2,0.6)--(2,-0.6) node[midway,anchor=west] {};
		\draw[thick,dotted] (1.5,-0.35)--(1.5,0.35)--(2,0)--cycle;
	\end{tikzpicture}
	\end{center}
The only counterexample to the 2-simple minded compatibility property we could find for this algebra is the collection $\X = 1, \scriptsize{\begin{matrix}4\\2\end{matrix}}, \scriptsize{\begin{matrix}1\\2\\3\end{matrix}}[1]$. This seems to indicate the algebra `almost' has the pairwise 2-simple minded compatibility property and hence should either be a $K(G(\Lambda),1)$ or `almost' be a $K(G(\Lambda),1)$ in some imprecise sense. More precisely, we are interested in the following.

\begin{ques}\
	\begin{enumerate}
		\item Is $\B\W(\Lambda)$ a $K(G(\Lambda),1)$? If not, can we obtain from $\B\W(\Lambda)$ (by adding/deleting/pasting cells, etc.) a cube complex that is a $K(G(\Lambda),1)$?
		\item How does the cohomology of $\B\W(\Lambda)$ compare to that of $G(\Lambda)$? Is there a way to compute the cohomology of $G(\Lambda)$ without first finding a $K(G(\Lambda),1)$?
	\end{enumerate}
\end{ques}

We also plan to return to algebras of the form $KQ_{i,j}/I$, as defined in Remark \ref{rem:othercyclic}. In particular, we are interested in the following.

\begin{ques}
	If $KQ_{i,j}/I$ is $\tau$-tilting finite, does it have the 2-simple minded compatibility property?
\end{ques}

A positive answer to this question would show that all algebras of the form $K\Delta_n/I$ which are $\tau$-tilting finite have the 2-simple minded compatibility property. In particular, this would show that in order for an arbitrary $\tau$-tilting finite algebra to fail to have the property, its quiver would need to contain a vertex of degree at least 3.

Along the same lines, we wish to further study ($\tau$-tilting finite) gentle algebras whose quivers contain loops and 2-cycles. In particular, we wish to answer the following.

\begin{ques}
	Let $\Lambda = KQ/I$ be an arbitrary $\tau$-tilting finite gentle algebra. When does $\Lambda$ have the 2-simple minded compatibility property?
\end{ques}

Remark \ref{rem:2cycles} shows that the answer to this question is not that $\Lambda$ has the property if and only if every vertex of $Q$ has degree at most 2. It also shows we cannot replace the requirement that every vertex has degree at most 2 with the requirement that every vertex is connected by an arrow to at most 2 other vertices.

Finally, we wish to resolve Conjecture \ref{conj:faithfulfunctor}. A proof of this conjecture either requires finding a classification of all 2-vertex $\tau$-tilting finite algebras or finding a new technique that does not depend on complete knowledge of which polygons can occur in $\tors\Lambda$.

\section*{Acknowledgements}
The first author is thankful to Emily Barnard, Corey Bregman, and Job Rock for meaningful conversations and suggestions. Both authors would like to thank Gordana Todorov for support and suggestions. The authors also thank an anonymous referee for thoughtful and thorough comments and suggestions.

\bibliographystyle{amsalpha}

\bibliography{../../bibliography-1}

\end{document}